 \theoremstyle{plain}
 \newtheorem{thm}{Theorem}[section]
 \newtheorem{lem}[thm]{Lemma}
 \newtheorem{prop}[thm]{Proposition}
 \theoremstyle{definition}
 \newtheorem{defn}[thm]{Definition}
 \theoremstyle{remark}
 \newtheorem{rmk}[thm]{Remark}
\def\beq{\begin{eqnarray}}
\def\eeq{\end{eqnarray}}
\DeclareSymbolFont{bbold}{U}{bbold}{m}{n}
\DeclareSymbolFontAlphabet{\mathbbold}{bbold}
 \newcommand{\bp}{\begin{proof}[Proof]}
 \newcommand{\ep}{\end{proof}}
\DeclareMathOperator{\Pf}{{\rm Pf}}
\DeclareMathOperator{\Euc}{{\sf Euc}}
\newcommand{\sq}{\mathord{/\!\!/}}
\DeclareMathOperator{\dR}{\rm dR} 
\def\Eu{{\rm Eu}}
\def\dil{{u}}
\def\sTr{{\rm sTr}}
\def\dR{{\rm d}}
\def\red{{\rm{red}}}
\def\Map{{\sf{Map}}}
\def\HH{{\mathbb{H}}}
\def\Ell{{\mathcal{E}\ell\ell}}
\def\pt{{\rm{pt}}}
\def\Lat{{\sf Lat}}
\def\F{\mathcal{F}}
\def\odd{{\rm odd}}
\def\ev{{\rm ev}}
\def\cl{{\rm cl}}
\def\vol{{\rm vol}}
\def\R{{\mathbb{R}}}
\def\A{{\mathbb{A}}}
\def\id{{{\rm id}}}
\def\MF{{\rm MF}}
\def\HH{{\mathbb{H}}}
\def\K{{\rm {K}}}
\def\H{{\rm {H}}}
\def\C{{\mathbb{C}}}
\def\Z{{\mathbb{Z}}}
\def\E{{\mathbb{E}}} 
\def\Fun{{\sf Fun}}
\def\End{\mathop{\sf End}}
\def\Bord{\hbox{-{\sf Bord}}}
\def\EFT{ \hbox{-{\sf EFT}}}
\def\Spin{{\rm Spin}}
\def\SL{{\rm SL}}
\def\TMF{{\rm TMF}}
\def\EBord{ \hbox{-{\sf EBord}}}
\newcommand{\op}{{\sf{op}}}
\begin{document}

\title{Chern characters for supersymmetric field theories}

\author{Daniel Berwick-Evans}

\address{Department of Mathematics, University of Illinois at Urbana-Champaign}

\email{danbe@illinois.edu}


\begin{abstract} 
We construct a map from $d|1$-dimensional Euclidean field theories to complexified K-theory when $d=1$ and complex analytic elliptic cohomology when $d=2$. This provides further evidence for the Stolz--Teichner program, while also identifying candidate geometric models for Chern characters within their framework. The construction arises as a higher-dimensional and parameterized generalization of Fei Han's realization of the Chern character in K-theory as dimensional reduction for $1|1$-dimensional Euclidean field theories. In the elliptic case, the main new feature is a subtle interplay between the geometry of the super moduli space of $2|1$-dimensional tori and the derived geometry of complex analytic elliptic cohomology. As a corollary, we obtain an entirely geometric proof that partition functions of $\mathcal{N}=(0,1)$ supersymmetric quantum field theories are weak modular forms, following a suggestion of Stolz and Teichner. 
\end{abstract}

\maketitle 

\section{Introduction and statement of results}

Given a smooth manifold $M$, Stolz and Teichner have constructed categories of $d|1$-dimensional super Euclidean field theories over~$M$ for $d=1,2$
\beq
d|1\EFT(M):=\Fun^{\otimes}(d|1\EBord(M),\mathcal{V})\label{eq:defnFT}
\eeq
whose objects are symmetric monoidal functors from a bordism category $d|1\EBord(M)$ to a category of vector spaces $\mathcal{V}$. The morphisms of $d|1\EBord(M)$ are $d|1$-dimensional super Euclidean bordisms with a map to a smooth manifold~$M$. We refer to~\cite[\S4]{ST11} for details. Stolz and Teichner have conjectured the existence of cocycle maps~\cite[\S1.5-1.6]{ST11}
\beq
&&\begin{tikzpicture}[baseline=(basepoint)];
\node (A) at (0,0) {$1|1\EFT(M)$};
\node (B) at (3,0) {$\K(M),$};
\node (C) at (6,0) {$2|1\EFT(M)$};
\node (D) at (9,0) {$\TMF(M),$};
\draw[->>,dashed] (A) to node [above] {\small cocycle}  (B);
\draw[->>,dashed] (C) to node [above] {\small cocycle} (D);
\path (0,-.05) coordinate (basepoint);
\end{tikzpicture}\label{eq:conjecture}
\eeq
for $\K$-theory and the cohomology theory of topological modular forms ($\TMF$). In this paper we construct subcategories $\mathcal{L}^{d|1}_0(M)\subset d|1\EBord(M)$ consisting of super circles with maps to~$M$ when $d=1$ and super tori with maps to~$M$ when $d=2$, both viewed as a particular class of closed bordisms over~$M$. A super Lie group $\Euc_{d|1}$ acts through super Euclidean isometries on super circles and super tori, inducing actions on~$\mathcal{L}^{d|1}_0(M)$ for $d=1,2$.

\begin{thm} \label{thm}
Invariant functions $C^\infty(\mathcal{L}^{d|1}_0(M))^{\Euc_{d|1}}$ determine cocycles in  2-periodic cohomology with complex coefficients when $d=1$ and cohomology with coefficients in the ring $\MF$ of weak modular forms when $d=2$. Composing with restriction along $\mathcal{L}^{d|1}_0(M)\subset d|1\EBord(M)$ determines maps
\beq
&&1|1\EFT(M)\stackrel{{\rm restr}}{\to} C^\infty(\mathcal{L}^{1|1}_0(M))^{\Euc_{1|1}}\stackrel{{\rm cocycle}}{\twoheadrightarrow} \H(M;\C[\beta,\beta^{-1}]),\qquad |\beta|=-2,  \label{eq:thm1}\\
&&2|1\EFT(M)\stackrel{{\rm restr}}{\to} C^\infty(\mathcal{L}^{2|1}_0(M))^{\Euc_{2|1}}\stackrel{{\rm cocycle}}{\twoheadrightarrow} \H(M;\MF)\label{eq:thm2}
\eeq
from field theories to these cohomology theories over $\C$. 
\end{thm}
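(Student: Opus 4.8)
The plan is to construct each cocycle map as a composite: first restrict a field theory to the indicated moduli stack of closed bordisms, and then produce de Rham--type cocycle data from the $\Euc_{d|1}$-invariant functions on that stack. The restriction step is essentially formal. A super circle (for $d=1$), respectively a super torus (for $d=2$), equipped with a map to $M$ is a closed object of $d|1\EBord(M)$, so a symmetric monoidal functor $E$ sends it to an endomorphism of the monoidal unit, i.e.\ a complex number depending only on its isomorphism class; since $\Euc_{d|1}$ acts on $\mathcal{L}^{d|1}_0(M)$ through isomorphisms in $d|1\EBord(M)$, the function $E|_{\mathcal{L}^{d|1}_0(M)}$ is automatically $\Euc_{d|1}$-invariant. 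This gives the ``$\mathrm{restr}$'' arrows in \eqref{eq:thm1} and \eqref{eq:thm2}; the content lies in the ``cocycle'' surjections.

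For $d=1$, I would give an explicit model of $\mathcal{L}^{1|1}_0(M)$ as a family of super circles $\R^{1|1}/(\ell\Z)$ together with their supermapping stacks into $M$, and then decompose the $\Euc_{1|1}$-action. By a localization argument, invariance under the even translation/rotation along the circle reduces an invariant function to its restriction to the locus of rotation-fixed super loops, which is the odd tangent bundle $\Pi TM$ with $C^\infty(\Pi TM)=\Omega^\bullet(M)$; the odd super-translation acts on this locus by the de Rham differential $\dR$, so invariance forces the associated form to be closed; and the length modulus together with its scaling behavior organizes the output into a Laurent variable $\beta$ of cohomological degree $2$, yielding the $2$-periodicity. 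The net effect is a natural transformation $C^\infty(\mathcal{L}^{1|1}_0(M))^{\Euc_{1|1}}\to \Omega^\bullet_{\mathrm{cl}}(M)[\beta,\beta^{-1}]$, and composition with the de Rham quotient gives the surjection onto $\H(M;\C[\beta,\beta^{-1}])$; surjectivity is witnessed by pulling back closed forms along the evaluation map $\mathcal{L}^{1|1}_0(M)\to M$, which manifestly produces invariant functions. This is a parameterized reformulation of Han's realization of the Chern character as dimensional reduction.

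For $d=2$, I would run the same analysis with super tori: $\mathcal{L}^{2|1}_0(M)$ fibers over the moduli stack of $2|1$-dimensional super tori, which in turn maps to the moduli stack of elliptic curves, and $\Euc_{2|1}$ again decomposes into the even translations/rotation (localizing onto $\Pi TM$), the odd translation (acting by $\dR$), and the lattice rescaling together with the residual $\SL_2(\Z)$-symmetry. The new ingredient is that $\SL_2(\Z)$-equivariance, combined with the weight carried by the odd modular direction of the super moduli of $2|1$-tori, constrains the resulting $\Omega^\bullet(M)$-valued functions of the modular parameter $\tau$ to be weak modular forms; this realizes a Chern--Weil de Rham model for $\H(M;\MF)$ and produces $C^\infty(\mathcal{L}^{2|1}_0(M))^{\Euc_{2|1}}\to \Omega^\bullet_{\mathrm{cl}}(M;\MF)$, hence the surjection onto $\H(M;\MF)$.

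I expect the main obstacle to be exactly this modularity statement for $d=2$. One must match the geometry of the super moduli space of $2|1$-tori --- in particular how the odd modular direction interacts with both the $\SL_2(\Z)$-action and holomorphy in $\tau$ --- against the derived-geometric description of complex analytic elliptic cohomology precisely enough to conclude that the invariant functions land in \emph{weak modular forms}, with the correct weights, rather than in some larger ring of non-modular functions of $\tau$. A secondary, more technical burden, already present for $d=1$, is setting up $C^\infty$ of these stacky supermapping spaces carefully enough that the decomposition of the $\Euc_{d|1}$-action, the localization onto $\Pi TM$, and the behavior along the non-compact length and lattice directions are all rigorous.
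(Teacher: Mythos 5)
Your restriction step matches the paper's (Lemmas~\ref{lem:11partiton} and~\ref{lem:21partiton}), but the cocycle step has a genuine gap: you assume that $\Euc_{d|1}$-invariance forces the restricted function to be a closed form on $M$ with coefficients in $\C[\beta,\beta^{-1}]$ (for $d=1$) or in weak modular forms (for $d=2$), and for general $M$ this is false. The invariant functions live on the full \emph{super} moduli space $\mathcal{L}^{d|1}_0(M)$, whose odd directions ($\lambda$ for $d=1$; $\lambda_1,\lambda_2$ for $d=2$) contribute extra components that your proposal has no place for. For $d=1$ an invariant function is a pair $(Z,Z_\ell)$ with $Z\in\Omega^\ev_\cl(M;C^\infty(\R_{>0}))$ genuinely depending on the circumference and $\partial_\ell Z=\dR Z_\ell$ (Proposition~\ref{prop:compute11}); e.g.\ the Chern character $\sTr(e^{\ell\A^2})$ of a super connection is $\ell$-dependent, so there is no natural map to $\Omega^\bullet_{\cl}(M)[\beta,\beta^{-1}]$ of the kind you describe. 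For $d=2$ an invariant function is a triple $(Z,Z_v,Z_{\bar\tau})$ with $\partial_vZ=\dR Z_v$ and $\partial_{\bar\tau}Z=\dR Z_{\bar\tau}$ (Proposition~\ref{prop:compute21}): $Z$ is in general neither holomorphic in $\tau$ nor independent of the volume; only its failure to be so is $\dR$-exact. This is precisely the holomorphic-anomaly phenomenon the paper emphasizes, and it is why your proposed target $\Omega^\bullet_{\cl}(M;\MF)$ cannot receive the map --- invariance does not, and should not, produce holomorphic modular coefficients directly.

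The missing idea is that the cocycle map lands not in closed $\MF$-valued forms but in the degree-zero cocycles of a \emph{double complex}: $\mathcal{K}^{*,\bullet}(M)=\Omega^\bullet(M;\Omega^*(\R_{>0})[\beta,\beta^{-1}])$ for $d=1$, and for $d=2$ the complex $\mathcal{E}^{*,\bullet}(M)$ built from the two-step complex $C^\infty(\HH\times\R_{>0})\stackrel{\dR\oplus\bar\partial}{\longrightarrow}\Omega^*(\R_{>0};C^\infty(\HH))\oplus\Omega^{0,1}(\HH;C^\infty(\R_{>0}))$. The odd descent data $Z_\ell$, resp.\ $(Z_v,Z_{\bar\tau})$, are exactly the components of the cocycle in positive auxiliary degree, and the identification $C^\infty(\mathcal{L}^{d|1}_0(M))^{\Euc_{d|1}}\simeq Z^0({\rm Tot})$ is Lemmas~\ref{lem:cocycle} and~\ref{lem:cocycle2}. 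One then computes $\H({\rm Tot}(\mathcal{K}^{*,\bullet}(M)))\simeq\H(M;\C[\beta,\beta^{-1}])$ because $\R_{>0}$ is contractible, and $\H({\rm Tot}(\mathcal{E}^{*,\bullet}(M)))\simeq\H(M;\MF)$ because $\HH$ is Stein, so $(\mathcal{O}(\HH),0)\hookrightarrow(C^*,\delta)$ is a quasi-isomorphism (Lemmas~\ref{lem:doublecomplex11} and~\ref{lem:doublecomplex21}). Weak modular forms appear only at the level of cohomology of the total complex. Your plan would only work for $M=\pt$, where the odd witnesses vanish for degree reasons; for general $M$ the cocycle surjections cannot be constructed as you describe.
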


For $M=\pt$, the map~\eqref{eq:thm2} specializes to part of an announced result of Stolz and Teichner~\cite[Theorem~1.15]{ST11}; see Remark~\ref{rmk:STannounce}. Applied to general manifolds~$M$, one can identify $\H(-;\C[\beta,\beta^{-1}])$ with complexified K-theory, and $\H(-;\MF)$ with a version of $\TMF$ over $\C$; see~\S\ref{sec:MF}. Hence, Theorem~\ref{thm} proves a version of the conjectures~\eqref{eq:conjecture} over$~\C$. 

We elaborate on this connection between Theorem~\ref{thm} and the conjectures~\eqref{eq:conjecture}. The maps~\eqref{eq:thm1} and~\eqref{eq:thm2} come from sending a field theory to its partition function. This assignment defines a type of character map for field theories. Similarly, the cohomology theories in~\eqref{eq:conjecture} have Chern characters valued in certain cohomology theories defined over~$\C$. Putting these ingredients together, we obtain the diagrams 
%
\beq
&&\resizebox{.95\textwidth}{!}{
\begin{tikzpicture}[baseline=(basepoint)];
\node (A) at (0,0) {$1|1\EFT(M)$};
\node (B) at (4,0) {$\K(M)$};
\node (C) at (0,-1.25) {$C^\infty(\mathcal{L}_0^{1|1}(M))^{\Euc_{1|1}}$};
\node (D) at (4,-1.25) {$\H(M;\C[\beta,\beta^{-1}])$};
\draw[->>,dashed] (A) to node [above] {cocycle}  (B);
\draw[->] (A) to node [left] {${\rm restr}$} (C);
\draw[->] (B) to node [right] {Ch}  (D);
\draw[->>] (C) to node [above] {cocycle} (D);
\path (0,-.75) coordinate (basepoint);
\end{tikzpicture}\quad 
\begin{tikzpicture}[baseline=(basepoint)];
\node (A) at (0,0) {$2|1\EFT(M)$};
\node (B) at (4,0) {$\TMF(M)$};
\node (C) at (0,-1.25) {$C^\infty(\mathcal{L}_0^{2|1}(M))^{\Euc_{2|1}}$};
\node (D) at (4,-1.25) {$\H(M;\MF).$};
\draw[->>,dashed] (A) to node [above] {cocycle}  (B);
\draw[->] (A) to node [left] {${\rm restr}$} (C);
\draw[->] (B) to node [right] {Ch}  (D);
\draw[->>] (C) to node [above] {cocycle} (D);
\path (0,-.75) coordinate (basepoint);
\end{tikzpicture}}\label{eq:somesquares}
\eeq
One expects the cocycle maps in~\eqref{eq:conjecture} will make these diagrams commute. This offers new perspective on the conjectures~\eqref{eq:conjecture}, as we briefly summarize. Extending a partition function to a full field theory requires both additional data and property: a choice of preimage under the map ${\rm restr}$ in~\eqref{eq:thm1} and~\eqref{eq:thm2} need not exist nor be unique. Similarly, refining a cohomology class over~$\C$ to a class in the target of~\eqref{eq:conjecture} is both data and property: a class is in the image of the Chern character if it satisfies an integrality condition, and lifts of integral classes need not be unique owing to the presence of torsion. Up to an equivalence relation called \emph{concordance} (see below), the conjectures~\eqref{eq:conjecture} assert that the data and property determining such refinements---either as field theories or cohomology classes---precisely match each other. 


The concordance relation features in the full conjecture of Stolz and Teichner, which asserts that the cocycle maps~\eqref{eq:conjecture} induce bijections between concordance classes of field theories and cohomology classes~\cite[\S1.5-1.6]{ST11}. Recall that for a sheaf $\F\colon {\sf Mfld}^\op\to {\sf Set}$ on the site of manifolds, sections $s_0,s_1\in \F(M)$ are \emph{concordant} if there exists $s\in \F(M\times \R)$ such that $s_0= i_0^*s$, $s_1= i_1^*s$ where $i_0,i_1\colon M\hookrightarrow M\times \R$ are the inclusions at~0 and~1. Concordance defines an equivalence relation on the set $\F(M)$ whose equivalence classes are \emph{concordance classes}.

\begin{prop} \label{mainprop}
The assignment $M\mapsto C^\infty(\mathcal{L}^{d|1}_0(M))^{\Euc_{d|1}}$ is a sheaf on the site of manifolds. Concordance classes of sections map surjectively to $\H(M;\C[\beta,\beta^{-1}])$ and $\H(M;\MF)$ when $d=1$ and~$2$, respectively.
\end{prop}

There is an analogous definition of concordance for (higher) stacks, where the stack condition is used to show that the concordance relation is transitive. Assuming that $M\mapsto d|1\EFT(M)$ is a $d$-stack, Proposition~\ref{mainprop} implies that concordance classes of $d|1$-dimensional Euclidean field theories map to $\H(M;\C[\beta,\beta^{-1}])$ and $\H(M;\MF)$ for $d=1$ and 2, respectively. We expect this to implement the Chern character for K-theory and TMF through the maps on concordance classes induced by the diagrams~\eqref{eq:somesquares}.

This brings us to a technical point: although it is expected that the assignment $M\mapsto d|1\EFT(M)$ is a $d$-stack, when $d=2$ this statement is contingent on a fully-extended enhancement of the existing definitions. This fully-extended aspect is an essential ingredient in Stolz and Teichner's conjecture that concordance classes of $2|1$-dimensional field theories yield TMF; see~\cite[Conjecture~1.17]{ST11}. In this paper, the source of~\eqref{eq:thm2} uses the 1-categorical definition from~\cite{ST11}. Fully-extended $2|1$-dimensional super Euclidean field theories should map to this 1-categorical version (via a forgetful functor), and from this one would obtain a Chern character on concordance classes via post-composition with~\eqref{eq:thm2}.

\subsection{Cocycles from partition functions}
In physics, the best-known topological invariants associated with the field theories~\eqref{eq:defnFT} are the Witten index in dimension~$1|1$~\cite{susymorse}, and the elliptic genus in dimension~$2|1$~\cite{Witten_Elliptic,Alvarezetal}. These are examples of \emph{partition functions}. For example, when $d=2$ the partition function of the $\mathcal{N}=(0,1)$ supersymmetric sigma model with target a string manifold is a modular form called the \emph{Witten genus}~\cite{witten_dirac}. This genus led Segal to suggest that certain 2-dimensional quantum field theories could provide a geometric model for elliptic cohomology~\cite{Segal_Elliptic}. 

Stolz and Teichner refined these early ideas, leading to the conjectured cocycle maps~\eqref{eq:conjecture}. In their framework (as in Segal's~\cite{Segal_cft}), partition functions are defined as the value of a field theory on closed, connected bordisms~\cite[Definition~4.13]{ST11}. The definition of a super Euclidean field theory implies that this restriction determines a function invariant under the action by super Euclidean isometries
\beq
&&d|1\EFT(M)\to C^\infty(\{{\rm closed\ bordisms\ over \ } M\})^{\rm isometries.}\label{eq:partition}
\eeq
Fei Han~\cite{Han} shows that~\eqref{eq:partition} applied to a class of $1|1$-dimensional closed bordisms over~$M$
\beq
\Map(\R^{0|1},M)\simeq \Map(\R^{1|1}/\Z,M)^{S^1}\subset \Map(\R^{1|1}/\Z,M)\subset 1|1\Bord(M)\label{eq:Hanres}
\eeq
encodes the Chern character in K-theory. To summarize, restriction along~\eqref{eq:Hanres} evaluates a $1|1$-dimensional Euclidean field theory on length~1 super circles whose map to~$M$ is invariant under the action of loop rotation. This restriction is also a version of \emph{dimensional reduction}. When the input $1|1$-dimensional Euclidean field theory is constructed via Dumestrcu's super parallel transport for a vector bundle with connection~\cite{florin}, the resulting element in $C^\infty(\Map(\R^{0|1},M))\simeq \Omega^\bullet(M)$ is a differential form representative of the Chern character of that vector bundle. 

The cocycle map~\eqref{eq:thm2} is a more elaborate version of restriction along~\eqref{eq:Hanres}. The goal is to find an appropriate class of closed $2|1$-dimensional bordisms so that the restriction~\eqref{eq:partition} constructs a map from $2|1$-dimensional Euclidean field theories to complex analytic elliptic cohomology. There are two main problems to be solved in this 2-dimensional generalization. First, one cannot specialize to a particular super torus, as in the specialization to the length~1 super circle in~\eqref{eq:Hanres}. Indeed, elliptic cohomology over $\C$ is parameterized by the moduli of all complex analytic elliptic curves. This problem is easy enough to solve, though its resolution introduces some technicalities: one restricts to a moduli \emph{stack} of super tori. 

The second obstacle is more serious. Stolz and Teichner's field theories are neither chiral nor conformal, and hence restriction only gives a \emph{smooth} function on the moduli stack of super Euclidean tori.
On the other hand, a class in complex analytic elliptic cohomology only depends on the holomorphic part of the conformal modulus of a torus. 
Resolving this apparent mismatch comes through a surprising feature of the super moduli space~$\mathcal{L}^{2|1}_0(M)$: the failure of conformality and holomorphy is measured by a specified de~Rham coboundary (see Proposition~\ref{prop:compute21}). Loosely, this shows that functions on~$\mathcal{L}^{2|1}_0(M)$ possess a kind of \emph{derived} holomorphy and conformality. 


\subsection{Outline of the proof} \label{sec:outline}
Theorem~\ref{thm} boils down to somewhat technical computations in supermanifolds, so we briefly outline the approach and state key intermediary results in terms of ordinary (non-super) geometry. There are three main steps in the construction.
\begin{enumerate}
\item[(i)] Construct the super moduli spaces $\mathcal{L}^{d|1}_0(M)$. 
\item[(ii)] Compute the algebras of $\Euc_{d|1}$-invariant functions $C^\infty(\mathcal{L}^{d|1}_0(M))^{\Euc_{d|1}}$ in terms of differential form data on $M$. 
\item[(iii)] Construct the cocycle maps~\eqref{eq:thm1} and~\eqref{eq:thm2} using the output of  step (ii).
\end{enumerate}
The main work is in step (ii), culminating in Propositions~\ref{prop:compute11} and~\ref{prop:compute21} below.

For step (i), we start by defining
\beq
\mathcal{L}^{d|1}(M)&:=&\mathcal{M}^{d|1}\times \Map(\R^{d|1}/\Z^d,M),\qquad \mathcal{L}^{d|1}(M)\subset d|1\Bord(M),\label{eq:Ld1def}
\eeq
where $\mathcal{M}^{d|1}$ is the moduli space of super Euclidean structures on~$\R^{d|1}/\Z^d$, and $\Map(\R^{d|1}/\Z^d,M)$ is the generalized supermanifold of maps from $\R^{d|1}/\Z^d$ to $M$. Hence, an $S$-point of $\mathcal{L}^{1|1}(M)$ determines a family of super Euclidean circles with a map to~$M$, and an $S$-point of $\mathcal{L}^{2|1}(M)$ determines a family of super Euclidean tori with a map to~$M$. There is a canonical functor $\mathcal{L}^{d|1}(M)\to d|1\Bord(M)$, regarding these supermanifolds as bordisms from the empty set to the empty set.
%
Next we consider the subobject of~\eqref{eq:Ld1def} gotten by taking maps invariant under the $\R^d$-action on $\R^{d|1}/\Z^d$ by precomposition. Equivalently, this is the $S^1=\R/\Z$-fixed subspace when $d=1$ and the $T^2=\R^2/\Z^2$-fixed subspace when $d=2$. This yields finite-dimensional subobjects, 
\beq
\mathcal{M}^{d|1}\times \Map(\R^{0|1},M) \simeq \mathcal{L}^{d|1}_0(M):=\mathcal{L}^{d|1}(M)^{\R^d/\Z^d}\subset \mathcal{L}^{d|1}(M),\label{eq:inclusion}
\eeq
that, roughly speaking, are the subspaces of maps that are constant up to nilpotents. Restricting a field theory along the composition $\mathcal{L}^{d|1}_0(M)\subset \mathcal{L}^{d|1}(M)\to d|1\Bord(M)$ extracts a function, providing the first arrow in~\eqref{eq:thm1} and~\eqref{eq:thm2} (see Lemmas~\ref{lem:11partiton} and~\ref{lem:21partiton})
\beq
{\rm restr}\colon d|1\EFT(M)\to C^\infty(\mathcal{L}^{d|1}_0(M))^{\Euc_{d|1}}.\label{eq:partitionfunction}
\eeq

\begin{rmk}
The restriction~\eqref{eq:partitionfunction} is dimensional reduction in the sense of~\cite[Glossary]{strings1}, though it differs from dimensional reduction in the sense of~\cite[\S1.3]{ST11}. 
\end{rmk}

Step (ii) is a technical computation. The $d=1$ case is characterized as follows.

\begin{prop} \label{prop:compute11}
Elements of $C^\infty(\mathcal{L}^{1|1}_0(M))^{\Euc_{1|1}}$ are in bijection with pairs $(Z,Z_\ell)$, 
\beq
&&Z\in (\Omega^\bullet_\cl(M;C^\infty(\R_{>0})[\beta,\beta^{-1}]))^0 \qquad |\beta|=-2\label{eq:itsanidentification}\\
&&Z_\ell\in (\Omega^\bullet(M;C^\infty(\R_{>0})[\beta,\beta^{-1}]))^{-1}\nonumber
\eeq
where $Z$ is closed of total degree zero, $Z_\ell$ is of total degree $-1$, and they satisfy 
\beq
&&\partial_\ell Z=\dR Z_\ell,\label{eq:adddata1}
\eeq
where $\dR$ is the de~Rham differential on~$M$, and $\partial_\ell$ is the vector field on $\R_{>0}$ associated to the standard coordinate $\ell\in C^\infty(\R_{>0})$.
\end{prop}

For the $d=2$ case, let $\HH\subset \C$ denote the upper half plane with standard complex coordinates $\tau,\bar\tau\in C^\infty(\HH)$, and let $v\in C^\infty(\R_{>0})$ be the standard coordinate.

\begin{prop} \label{prop:compute21}
Elements of $C^\infty(\mathcal{L}^{2|1}_0(M))^{\Euc_{2|1}}$ are in bijection with triples $(Z,Z_{\bar \tau},Z_v)$
\beq
&&Z\in (\Omega^\bullet_\cl(M;C^\infty(\HH\times \R_{>0})[\beta,\beta^{-1}])^{\SL_2(\Z)})^0\qquad |\beta|=-2\label{eq:itsanidentification2}\\
&&Z_{\bar\tau},Z_v\in (\Omega^{\bullet}(M;C^\infty(\HH\times \R_{>0})[\beta,\beta^{-1}]))^{-1}\nonumber
\eeq
where $Z$ is closed of total degree zero, $Z_{\bar \tau},Z_v$ are of total degree $-1$, they satisfy an $\SL_2(\Z)$-invariance property stated in Lemma~\ref{lem:SL2}, and 
\beq
&&\partial_v Z=\dR Z_v,\qquad \partial_{\bar \tau} Z=\dR Z_{\bar\tau},\label{eq:adddata2}
\eeq
where $\dR$ is the de~Rham differential on~$M$, and $\partial_{\bar\tau}$, $\partial_v$ are vector fields on $\HH$ and $\R_{>0}$.
\end{prop}

In Propositions~\ref{prop:compute11} and~\ref{prop:compute21}, the closed differential form $Z$ arises by restriction to a subspace 
\beq
\R_{>0}\times \Map(\R^{0|1},M)&\hookrightarrow& \mathcal{L}^{1|1}_0(M), \label{eq:equivinc1}\\
\Lat \times \Map(\R^{0|1},M)&\hookrightarrow& \mathcal{L}^{2|1}_0(M),\label{eq:equivinc2}
\eeq
where $\Lat$ is the space of based, oriented lattices in $\C$. Indeed,~\eqref{eq:itsanidentification} and~\eqref{eq:itsanidentification2} come from 
 \beq\label{Eq:diffformdata}
&&\begin{array}{lcl} C^\infty(\R_{>0}\times \Map(\R^{0|1},M))^{\Euc_{1|1}}&\simeq& (\Omega_\cl(M;C^\infty(\R_{>0})[\beta,\beta^{-1}]))^0, \\
C^\infty(\Lat \times \Map(\R^{0|1},M))^{\Euc_{2|1}}&\simeq& (\Omega^\bullet(M;C^\infty(\HH\times \R_{>0})[\beta,\beta^{-1}])^{\SL_2(\Z)})^0.
\end{array}
\eeq
When $d=1$, $\ell\in \R_{>0}$ corresponds to (super) circles of length $\ell$, and~\eqref{eq:adddata1} shows that the failure of $Z\in C^\infty(\R_{>0}\times \Map(\R^{0|1},M))^{\Euc_{1|1}}$ to be independent of this length is $\dR$-exact. When $d=2$, a point $(\tau,\bar\tau,v)\in \HH\times \R_{>0}$ corresponds to (super) Euclidean tori with conformal modulus $(\tau,\bar\tau)$ and total volume $v$. Then $Z_v$ and~\eqref{eq:adddata2} show that the failure of $Z\in C^\infty(\Lat \times \Map(\R^{0|1},M))^{\Euc_{2|1}}$ to be independent of the volume is $\dR$-exact. Similarly, $Z_{\bar \tau}$ and~\eqref{eq:adddata2} show that the failure of $Z$ to depend holomorphically on the conformal modulus is $\dR$-exact. This is the precise sense in which functions on~$\mathcal{L}^{2|1}_0(M)$ exhibit a derived version of holomorphy and conformality.

Finally for step (iii), we consider the maps (with notation from Propositions~\ref{prop:compute11} and~\ref{prop:compute21})
\beq
&&C^\infty(\mathcal{L}^{1|1}_0(M))\to  \H(M;C^\infty(\R_{>0})[\beta,\beta^{-1}]), \qquad (Z,Z_\ell)\mapsto [Z]\label{eq:cocyclesp1}\\
&&C^\infty(\mathcal{L}^{2|1}_0(M))\to  \H(M;C^\infty(\HH\times \R_{>0})[\beta,\beta^{-1}])^{\SL_2(\Z)},\qquad  (Z,Z_{\bar \tau},Z_\ell)\mapsto [Z] \label{eq:cocyclesp2},
\eeq
where $|\beta|=-2$ and has weight~1 for $\SL_2(\Z)$, meaning $\beta\mapsto (c\tau+d)\beta$.

\begin{proof}[Proof of Theorem~\ref{thm} from Propositions~\ref{prop:compute11} and~\ref{prop:compute21}] Starting with the $d=1$ case, we claim that the map~\eqref{eq:cocyclesp1} factors through cohomology with coefficients in the subring $\C[\beta,\beta^{-1}]\hookrightarrow C^\infty(\R_{>0})[\beta,\beta^{-1}]$, including as the constant functions on $\R_{>0}$. Indeed, observe
\beq
&&\partial_\ell [Z]=[\partial_\ell Z]=[dZ_\ell] =0,\label{eq:constantinell}
\eeq
using~\eqref{eq:adddata1}. Hence, $[Z]\in \H(M;\C[\beta,\beta^{-1}])\subset \H(M;C^\infty(\R_{>0})[\beta,\beta^{-1}])$ and~\eqref{eq:cocyclesp1} determines the cocycle map in~\eqref{eq:thm1}. 

Similarly, the map~\eqref{eq:cocyclesp2} factors through cohomology with coefficients in the subring 
\beq
&&\MF\simeq (\mathcal{O}(\HH)[\beta,\beta^{-1}])^{\SL_2(\Z)}\hookrightarrow (C^\infty(\HH\times \R_{>0})[\beta,\beta^{-1}])^{\SL_2(\Z)},\label{eq:itsaninclusion}
\eeq
where $\MF$ is the ring of weak modular forms (see Definition~\ref{defn:mf}). The map~\eqref{eq:itsaninclusion} is the pullback of smooth functions along the projection $\HH\times \R_{>0}\to \HH$ composed with the inclusion $\mathcal{O}(\HH)\subset C^\infty(\HH)$. Indeed, we have
$$
\partial_v [Z]=[\partial_v Z]=[dZ_v] =0,\qquad \partial_{\bar \tau} [Z]=[\partial_{\bar \tau} Z]=[dZ_{\bar \tau}] =0,
$$
using~\eqref{eq:adddata2}, where the first set of equalities demonstrate independence from $\R_{>0}$, while the second demonstrate holomorphic dependence on $\HH$. Finally, the $\SL_2(\Z)$-invariance property for $Z$ (see Lemma~\ref{lem:SL2}) shows that~$[Z]$ is indeed a cohomology class valued in modular forms,
$$
[Z]\in \H(M;\MF)\subset \H(M;C^\infty(\HH\times \R_{>0})[\beta,\beta^{-1}])^{\SL_2(\Z)}
$$
and hence~\eqref{eq:cocyclesp2} determines the cocycle map in~\eqref{eq:thm2}. 

Surjectivity of the cocycle maps~\eqref{eq:thm1} and~\eqref{eq:thm2} follows from the inclusions
\beq
\Omega^\bullet_{\rm cl}(M;\C[\beta,\beta^{-1}])&\hookrightarrow& C^\infty(\mathcal{L}^{1|1}_0(M)),\qquad \omega\mapsto (\omega,0)=(Z,Z_\ell)\nonumber\\
\Omega^\bullet_{\rm cl}(M;\MF)&\hookrightarrow& C^\infty(\mathcal{L}^{2|1}_0(M)),\qquad \omega\mapsto (\omega,0,0)=(Z,Z_{\bar \tau},Z_v),\nonumber
\eeq
using the description of functions from Propositions~\ref{prop:compute11} and~\ref{prop:compute21} and the maps on coefficients described in the previous two paragraphs. The definition of the maps~\eqref{eq:cocyclesp1} and~\eqref{eq:cocyclesp2} together with the de~Rham theorem then implies that every cohomology class admits a refinement to a function on~$\mathcal{L}^{d|1}_0(M)$.
\ep


The following remarks relate our results to other work.

\begin{rmk}
The above analysis of the moduli space of super Euclidean tori is related to previous investigations of moduli spaces of super Riemann surfaces in the string theory literature, e.g., see~\cite{notprojected,WittensuperRiem}. However, the vast majority of prior constructions in string theory and in the Stolz--Teichner program only study the reduced moduli spaces. In particular, the cocycle models for (equivariant) elliptic cohomology in~\cite{DBE_EquivTMF,DBE_MQ,Powerops,BET1} arise as functions on the reduced moduli space. In this prior work, the correct mathematical object comes only after imposing holomorphy by hand. However, as Theorem~\ref{thm} shows, this property emerges naturally from the geometry of $2|1$-dimensional super tori. 
\end{rmk} 

\begin{rmk}
When $M=\pt$, Proposition~\ref{prop:compute21} shows that partition functions of $\mathcal{N}=(0,1)$ supersymmetric quantum field theories are weak modular forms: since $\Omega^\odd(\pt)=\{0\}$, $Z_v=Z_{\bar \tau}=0$ are no additional data. In contrast to the arguments in the physics literature that analyze a particular action functional (e.g.,~\cite[\S4.3-4.4]{DualityMock}), the proof here emerges entirely from the geometry of the moduli space of super Euclidean tori.
This recovers Stolz and Teichner's claim from~\cite[pg.~10]{ST11} that ``holomorphicity is a consequence of the more intricate structure of the moduli stack of supertori." 
\end{rmk}

\begin{rmk}
The data $Z_{\bar \tau}$ in Proposition~\ref{prop:compute21} is closely related to \emph{anomaly cancelation} in physics and choices of \emph{string structures} in geometry. An illustrative example is the elliptic Euler class: an oriented vector bundle~$V\to M$ determines a class $[\Eu(V)]\in \H(M;\MF)$ if the Pontryagin class $[p_1(V)]\in \H^4(M;\R)$ vanishes. In~\S\ref{sec:Witten} we show that the set of differential forms $H\in \Omega^3(M;\R)$ with $p_1(V)=\dR H$ parameterizes cocycle refinements of $[\Eu(V)]$ to a function on $\mathcal{L}^{2|1}_0(M)$.
Geometrically,~$H$ is part of the data of a string structure on~$V$. In physics,~$H$ is part of the data for anomaly cancelation in a theory of $V$-valued free fermions. Under the conjectured cocycle maps~\eqref{eq:conjecture} $V$-valued free fermions are expected to furnish representatives of elliptic Euler classes in $\TMF(M)$~\cite[\S4.4]{ST04}. Perturbative quantization of fermions rigorously constructs elliptic Euler cocycles over~$\C$~\cite[\S6]{DBE_MQ}, and Theorem~\ref{thm} shows that lifting a cohomology class to a $2|1$-dimensional Euclidean field theory must depend on a \emph{choice} of string structure, at least rationally. 
\end{rmk}

\begin{rmk}
If the input field theory in~\eqref{eq:thm2} is super conformal, then~$\dR Z_v=0$, whereas if the input theory is holomoprhic then $\dR Z_{\bar\tau}=0$. For a general field theory (not necessarily conformal or holomorphic) the differential form $\partial_{\bar \tau}Z_\ell-\partial_\ell Z_{\bar \tau}$ is closed. These closed forms have the potential to encode secondary cohomological invariants of field theories. Although we do not know explicit field theories for which this cohomology class is nonzero, the structure appears to be related to mock modular phenomena and the $\TMF$-valued torsion invariants studied in~\cite{GJFW,TheoDavide}. 
\end{rmk}


\begin{rmk}
In light of Fei Han's work on the Bismut--Chern character~\cite{Han}, it is tempting to think of the restriction $2|1\EFT(M)\to C^\infty(\mathcal{L}^{2|1}(M))^{\Euc_{2|1}}$ (without taking $T^2$-invariant maps) as a candidate construction of the \emph{elliptic} Bismut--Chern character. Indeed, functions on $C^\infty(\mathcal{L}^{2|1}(M))^{\Euc_{2|1}}$ can be identified with cocycles analogous to~\eqref{eq:adddata2}, where $Z$ is a differential form on the double loop space and the de~Rham differential $\dR$ is replaced with the $T^2$-equivariant differential investigated in~\cite{BElocalization}. 
\end{rmk}


\subsection{Conventions for supermanifolds}\label{sec:conventions} This paper works in the category of supermanifolds with structure sheaves defined over~$\C$; this is called the category of $cs$-supermanifolds in~\cite{DM}. The majority of what we require is covered in the concise introduction~\cite[\S4.1]{ST11}, but we establish a little notation presently. First, all functions and differential forms are $\C$-valued. The supermanifolds~$\R^{n|m}$ are characterized by their super algebra of functions $C^\infty(\R^{n|m})\simeq C^\infty(\R^n;\C)\otimes_\C \Lambda^\bullet \C^m$. The representable presheaf associated with~$\R^{n|m}$ assigns to a supermanifold $S$ the set 
$$
\R^{n|m}(S):=\{t_1,t_2,\dots,t_n\in C^\infty(S)^{\ev}, \theta_1,\theta_2,\dots,\theta_m\in C^\infty(S)^\odd \mid (t_i)_{\rm red}=\overline{(t_i)}_{\rm red}\}
$$
where $(t_i)_{\rm red}$ denotes the restriction of a function to the reduced manifold $S_{\rm red}\hookrightarrow S$, and $\overline{(t_i)}_{\rm red}$ is the conjugate of the complex-valued function $(t_i)_{\rm red}$ on the smooth manifold~$S_{\rm red}$. We use this functor of points description throughout the paper, typically with roman letters denoting even functions and greek letters denoting odd functions. 

We follow Stolz and Teichner's terminology wherein a presheaf on supermanifolds is called a \emph{generalized supermanifold}. An example of a generalized supermanifold is $\Map(X,Y)$ for supermanifolds $X$ and $Y$ that assigns to a supermanifold $S$ the set of maps $S\times X\to Y$. For a manifold $M$ regarded as a supermanifold, the generalized supermanifold $\Map(\R^{0|1},M)$ is isomorphic to the representable presheaf associated to the odd tangent bundle $\Pi TM$, as we recall briefly. We use the notation $(x,\psi)\in \Pi TM(S)$ for an $S$-point, where $x\colon S\to M$ is a map and $\psi\in \Gamma(S;x^* TM)^\odd$ is an odd section. This gives an $S$-point $(x+\theta\psi)\in \Map(\R^{0|1},M)$ by identifying $x$ with an algebra map $x\colon C^\infty(M)\to C^\infty(S)$ and $\psi\colon C^\infty(M)\to C^\infty(S)$ with an odd derivation relative to~$x$. These fit together to define an algebra map 
\beq
C^\infty(M)\stackrel{(x,\psi)}{\to}C^\infty(S)\oplus \theta \cdot C^\infty(S) \simeq C^\infty(S\times \R^{0|1}),\label{eq:algebramap}
\eeq
with the isomorphism coming from Taylor expansion in a choice of odd coordinate $\theta\in C^\infty(\R^{0|1})$. The map~\eqref{eq:algebramap} is equivalent to $S\times\R^{0|1}\to M$, i.e., an $S$-point of $\Map(\R^{0|1},M)$. 
The functions $C^\infty(\Map(\R^{0|1},M))\simeq C^\infty(\Pi TM)\simeq \Omega^\bullet(M)$ recover differential forms on $M$ as a $\Z/2$-graded $\C$-algebra. The action of automorphisms of $\R^{0|1}$ on this algebra encode the de~Rham differential and the grading operator on forms; e.g., see~\cite[\S3]{HKST}. 

\subsection{Acknowledgements} It is a pleasure to thank Bertram Arnold, Theo Johnson-Freyd, Stephan Stolz, Peter Teichner, and Arnav Tripathy for fruitful conversations that shaped this work, as well as a referee for finding a mistake in a previous draft and making several suggestions that improved the clarity and precision of the paper.

\section{A map from $1|1$-Euclidean field theories to complexified K-theory}\label{sec:Kthy}

The main goal of this section is to prove Proposition~\ref{prop:compute11}. From the discussion in~\S\ref{sec:outline}, this proves the $d=1$ case of Theorem~\ref{thm}. We also prove Proposition~\ref{mainprop} when $d=1$, and connect this result with Chern characters of super connections.

\subsection{The moduli space of super Euclidean circles}

\begin{defn}
Let~$\E^{1|1}$ denote the super Lie group with underlying supermanifold $\R^{1|1}$ and multiplication
\beq
(t,\theta)\cdot (t',\theta')=(t+t'+i\theta\theta',\theta+\theta'), \quad (t,\theta),(t',\theta')\in \R^{1|1}(S).\label{eq:E11mult}
\eeq
Define the \emph{super Euclidean group} as $\Euc_{1|1}:=\E^{1|1}\rtimes \Z/2$ where the semidirect product is defined using the $\Z/2=\{\pm 1\}$-action by reflection $
(t,\theta)\mapsto (t,\pm\theta)$, for $(t,\theta)\in\E^{1|1}(S).$
\end{defn} 

The super Lie algebra of $\E^{1|1}$ is generated by a single odd element, namely the left invariant vector field $D=\partial_\theta-i\theta\partial_t$. The right-invariant generator is $Q=\partial_\theta+i\theta\partial_t$. The super commutators are
\beq
\frac{1}{2}[D,D]=D^2=-i\partial_t,\qquad \frac{1}{2}[Q,Q]=Q^2=i\partial_t.\label{eq:Wick}
\eeq

\begin{rmk} The factors of $i=\sqrt{-1}$ in~\eqref{eq:E11mult} and~\eqref{eq:Wick} come from Wick rotation, e.g., see~\cite[pg.~95, Example~4.9.3]{strings1}. This differs from the convention for the $1|1$-dimensional Euclidean group in~\cite[Definition~33]{HST}, but is more closely aligned with the Wick rotated $2|1$-dimensional Euclidean geometry defined in~\cite[\S4.2]{ST11} and studied below. \end{rmk} 

Let $\R^{1|1}_{>0}$ denote the supermanifold gotten by restricting the structure sheaf of $\R^{1|1}$ to the positive reals,~$\R_{>0}\subset \R$.


\begin{defn}\label{defn:supercircle}
Given an $S$-point $(\ell,\lambda)\in \R^{1|1}_{>0}(S)$, define the family of $1|1$-dimensional \emph{super Euclidean circles} as the quotient
\beq
S^{1|1}_{\ell,\lambda}:=(S\times \R^{1|1})/\Z\label{eq:supercircle}
\eeq
for the left $\Z$-action over $S$ determined by the 
formula
\beq
n\cdot (t,\theta)= (t+n\ell+in\lambda\theta,n\lambda+\theta),\quad n\in \Z(S), (t,\theta)\in \R^{1|1}(S).\label{eq:Zact}
\eeq
Equivalently this is the restriction of the left $\E^{1|1}$-action on $S\times \R^{1|1}$ to the $S$-family of subgroups $\Z\times S\subset \E^{1|1}\times S$ with generator $\{1\}\times S\simeq S\stackrel{(\ell,\lambda)}{\hookrightarrow} \R^{1|1}_{>0}\times S\subset \E^{1|1}\times S$. Define the \emph{standard super Euclidean circle} denoted $S^{1|1}=S^{1|1}_{1,0}=\R^{1|1}/\Z$ as the quotient by the action for the standard inclusion $\Z\subset \R\subset \E^{1|1}$. 
\end{defn}

\begin{rmk}
The $S$-family of subgroups $S\times \Z\hookrightarrow S\times \E^{1|1}$ generated by $(\ell,\lambda)\in \R^{1|1}_{>0}(S)$ is normal if and only if $\lambda=0$. Hence, the standard super circle $S^{1|1}$ inherits a group structure from $\E^{1|1}$, but a generic $S$-family of super Euclidean circles $S^{1|1}_{\ell,\lambda}$ does not. 
\end{rmk}

\begin{rmk} There is a more general notion of a family of super circles where~\eqref{eq:Zact} incorporates the action by~$\Z/2<\Euc_{1|1}$. This moduli space has two connected components corresponding to choices of spin structure on the underlying ordinary circle, with the component from Definition~\ref{defn:supercircle} corresponding to the odd (or nonbounding) spin structure. This turns out to be the relevant component to recover complexified K-theory. 
\end{rmk}

We recall~\cite[Definitions~2.26, 2.33 and 4.4]{ST11} that for a supermanifold $\mathbb{M}$ with an action by a super Lie group $G$, an \emph{$(\mathbb{M},G)$-structure} on a family of supermanifolds $T\to S$ is an open cover $\{U_i\}$ of $T$ with isomorphisms to open sub supermanifolds $\varphi\colon U_i\stackrel{\sim}{\to} V_i\subset S\times \mathbb{M}$ and transition data $g_{ij}\colon V_i\bigcap V_j\to G$ compatible with the $\varphi_i$ and satisfying a cocycle condition. An \emph{isometry} between supermanifolds with $(\mathbb{M},G)$-structure is defined as a map $T\to T'$ over $S$ that is locally given by the $G$-action on~$\mathbb{M}$, relative to the open covers~$\{U_i\}$ of~$T$ and~$\{U_i'\}$ of~$T'$. Supermanifolds with $(\mathbb{M},G)$-structure and isometries form a category fibered over supermanifolds. 

\begin{defn}[\cite{ST11} \S4.2]\label{defn:11superEuc}
A \emph{super Euclidean} structure on a $1|1$-dimensional family $T\to S$ is an $(\mathbb{M},G)$-structure for the left action of $G=\Euc_{1|1}$ on~$\mathbb{M}=\R^{1|1}$. 
\end{defn}


\begin{lem} \label{lem:supercircleEuc}
An $S$-family of super circles~\eqref{eq:supercircle} has a canonical super Euclidean structure.  
\end{lem}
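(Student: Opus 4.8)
The plan is to produce the $(\mathbb{M},G)$-structure directly from the construction of $S^{1|1}_{\ell,\lambda}$ as a quotient $(S\times\R^{1|1})/\Z$. Recall from Definition~\ref{defn:supercircle} that this quotient is formed using the restriction of the \emph{left} $\E^{1|1}$-action to the $S$-family of subgroups $\Z\times S\hookrightarrow \E^{1|1}\times S$ generated by $(\ell,\lambda)$. So I already have a description of $S^{1|1}_{\ell,\lambda}$ as glued out of translates of $\R^{1|1}$ by elements of $\E^{1|1}\subset\Euc_{1|1}$, which is exactly the shape of the data required for an $(\R^{1|1},\Euc_{1|1})$-structure in the sense of \cite[Def.~2.26, 2.33, 4.4]{ST11}.

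The key steps, in order. First I would exhibit an explicit open cover of the total space of the quotient: since the $\Z$-action is free and properly discontinuous (it is a translation action by $\ell\in\R_{>0}$ on the reduced circle, lifted), I can cover $\R/\ell\Z$ by, say, two open arcs each of length less than $\ell$; pulling back gives an open cover $\{U_0,U_1\}$ of $S^{1|1}_{\ell,\lambda}$ together with canonical isomorphisms $\varphi_i\colon U_i\xrightarrow{\sim} V_i\subset S\times\R^{1|1}$ coming from the local sections of the quotient map $S\times\R^{1|1}\to S^{1|1}_{\ell,\lambda}$. Second, I would compute the transition data: on the overlaps $V_i\cap V_j$, the two local charts differ by the deck transformation $n\cdot(-)$ for the appropriate $n\in\{0,\pm1\}$, and by the defining formula~\eqref{eq:Zact} this deck transformation is precisely the left action of the element $(n\ell,n\lambda)\in\R^{1|1}(S)=\E^{1|1}(S)\subset\Euc_{1|1}(S)$. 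Thus the transition functions $g_{ij}\colon V_i\cap V_j\to\Euc_{1|1}$ are the constant (in the $\R^{1|1}$-direction) maps with value $(n_{ij}\ell,n_{ij}\lambda)$. Third, I would check the cocycle condition $g_{ij}g_{jk}=g_{ik}$ on triple overlaps: this reduces to additivity of $n\mapsto n\cdot(-)$, i.e., to the fact that $(\ell,\lambda)$ generates an honest $\Z$-subgroup of $\E^{1|1}(S)$, which follows from associativity of the multiplication~\eqref{eq:E11mult} together with $n\mapsto (n\ell,n\lambda)$ being a group homomorphism $\Z\to\E^{1|1}(S)$ (note $(n\ell,n\lambda)\cdot(m\ell,m\lambda)=(n\ell+m\ell+in\lambda\cdot m\lambda,\,n\lambda+m\lambda)=((n+m)\ell,(n+m)\lambda)$ since $\lambda^2=0$ as $\lambda$ is odd). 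Finally, I would remark on canonicity: any two choices of the covering arcs yield isometric structures (the identity map is locally given by the $\Euc_{1|1}$-action), so the Euclidean structure on $S^{1|1}_{\ell,\lambda}$ is well-defined independent of auxiliary choices, and it is visibly natural in $S$, so it assembles to a structure on the whole family.

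The main obstacle — really the only nontrivial point — is checking that the quotient $(S\times\R^{1|1})/\Z$ exists as a supermanifold family over $S$ and that the quotient map admits local sections, i.e., that the $\Z$-action is free and properly discontinuous in the $cs$-supermanifold sense of \S\ref{sec:conventions}. Since $\lambda$ is nilpotent, the action on the underlying reduced space is just translation by $n\ell$ on $\R$, which is manifestly free and properly discontinuous; the odd and nilpotent even directions are handled by the fact that the structure sheaf is unaffected by the (nilpotent part of the) translation up to the explicit automorphism in~\eqref{eq:Zact}. Once this is in place, everything else is bookkeeping with the group law~\eqref{eq:E11mult}. I would also note in passing that the spin structure hidden in this construction is the odd one, matching the remark following Definition~\ref{defn:supercircle}, since the gluing never uses the reflection generator of $\Z/2<\Euc_{1|1}$.
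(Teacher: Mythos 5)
Your proposal is correct and takes essentially the same approach as the paper: the paper's (three-sentence) proof likewise uses the quotient map $S\times\R^{1|1}\to S^{1|1}_{\ell,\lambda}$ as the cover and takes the transition data from the $\Z$-action, which acts through $\E^{1|1}\subset\Euc_{1|1}$ by construction. Your version merely unpacks this into explicit arcs, the cocycle check via the group homomorphism $n\mapsto(n\ell,n\lambda)$, and the freeness/proper discontinuity of the reduced action, all of which the paper leaves implicit.
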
 

\bp
We endow a family of super circles with a $1|1$-dimensional Euclidean structure as follows. Take the open cover $S\times \R^{1|1}\to S^{1|1}_{\ell,\lambda}$ supplied by the quotient map, and take transition data from the $\Z$-action on $S\times \R^{1|1}$. By definition this $\Z$-action is through super Euclidean isometries, and so the quotient inherits a super Euclidean structure. 
\ep

We observe that every family of super circles pulls back from the universal family $(\R^{1|1}_{>0}\times \R^{1|1})/\Z\to \R^{1|1}_{>0}$ along a map $S\to \R^{1|1}_{>0}$. Hence, 
$$
\mathcal{M}^{1|1}:=\R^{1|1}_{>0},\qquad \mathcal{S}^{1|1}:=(\R^{1|1}_{>0}\times \R^{1|1})/\Z\to \R^{1|1}_{>0},
$$
is the moduli space of super Euclidean circles and the universal family of super Euclidean circles, respectively. The following shows that $\mathcal{M}^{1|1}=\R^{1|1}_{>0}$ can equivalently be viewed as the moduli space of super Euclidean structures on the standard super circle. 

\begin{lem} \label{lem:torusstnd} There exists an isomorphism of supermanifolds over~$\R^{1|1}_{>0}$,
\beq
&&\R^{1|1}_{>0}\times S^{1|1}\stackrel{\sim}{\to} \mathcal{S}^{1|1}, \label{eq:isotostdS}
\eeq
from the constant $\R^{1|1}_{>0}$-family with fiber the standard super circle to the universal family of super circles. This isomorphism does not preserve the super Euclidean structure on~$\mathcal{S}^{1|1}$. 
\end{lem}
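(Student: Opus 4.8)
The plan is to realize both families as $\Z$-quotients of one total space and exhibit a $\Z$-equivariant isomorphism of the covers. Unwinding the definitions, the constant family is $\R^{1|1}_{>0}\times S^{1|1}=(\R^{1|1}_{>0}\times\R^{1|1})/\Z$ for the action $n\cdot(\ell,\lambda,t,\theta)=(\ell,\lambda,t+n,\theta)$ on the second factor alone, whereas $\mathcal{S}^{1|1}=(\R^{1|1}_{>0}\times\R^{1|1})/\Z$ for the twisted action of~\eqref{eq:Zact}, namely $n\cdot(\ell,\lambda,t,\theta)=(\ell,\lambda,\,t+n\ell+in\lambda\theta,\,n\lambda+\theta)$. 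So it is enough to produce an isomorphism $\Phi$ of $\R^{1|1}_{>0}\times\R^{1|1}$ over $\R^{1|1}_{>0}$ intertwining these two $\Z$-actions; since both actions are free and properly discontinuous, $\Phi$ then descends to the asserted isomorphism~\eqref{eq:isotostdS} over $\R^{1|1}_{>0}$.

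In functor-of-points notation I would take
$$
\Phi(\ell,\lambda,t,\theta)=(\ell,\ \lambda,\ t\ell+it\lambda\theta,\ \theta+t\lambda),\qquad (\ell,\lambda,t,\theta)\in(\R^{1|1}_{>0}\times\R^{1|1})(S).
$$
Equivariance is then the one-line identity $\Phi(\ell,\lambda,t+n,\theta)=n\cdot\Phi(\ell,\lambda,t,\theta)$, in which the only nonobvious cancellation uses $\lambda^2=0$. And $\Phi$ is an isomorphism, with explicit two-sided inverse
$$
\Psi(\ell,\lambda,s,\psi)=(\ell,\ \lambda,\ s\ell^{-1}-is\ell^{-2}\lambda\psi,\ \psi-s\ell^{-1}\lambda);
$$
invertibility rests on $\ell+i\lambda\psi$ being a unit (its reduction is $\ell\neq0$) and on products of two odd functions being square-zero. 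Descending $\Phi$ yields~\eqref{eq:isotostdS}.

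For the last clause, compare the canonical super Euclidean structure on $\mathcal{S}^{1|1}$ from Lemma~\ref{lem:supercircleEuc} with the product structure on $\R^{1|1}_{>0}\times S^{1|1}$, whose fiber over $(\ell,\lambda)$ is the standard super circle $S^{1|1}=S^{1|1}_{1,0}$. If~\eqref{eq:isotostdS} respected these structures, then specializing to a classical point $\ell_0\in\R_{>0}$ with $\lambda=0$ would produce an isometry $S^{1|1}_{1,0}\cong S^{1|1}_{\ell_0,0}$, which is false for $\ell_0\neq1$ since the underlying metrized circles $\R/\Z$ and $\R/\ell_0\Z$ have different circumference; indeed $\R^{1|1}_{>0}=\mathcal{M}^{1|1}$ is by construction the (nonconstant) moduli space of these structures. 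I expect the only real care to be in pinning down the odd twist in $\Phi$ so that $\Z$-equivariance holds on the nose; the content of the last clause is precisely that this bundle-level triviality is incompatible with the Euclidean geometry, which is what makes $\mathcal{M}^{1|1}$ nontrivial in the first place.
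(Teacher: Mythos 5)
Your proof is correct and is essentially the paper's own argument: the map $\Phi(\ell,\lambda,t,\theta)=(\ell,\lambda,t(\ell+i\lambda\theta),\theta+t\lambda)$ is exactly the one the paper writes down in~\eqref{eq:Zequiv}, together with the same $\Z$-equivariance check. The only additions are the explicit inverse $\Psi$ (where the paper just says the map is ``easily seen to be an isomorphism'') and the fiberwise circumference argument for the failure to be an isometry, which is a slightly more substantive justification than the paper's remark that the map is not locally given by the $\Euc_{1|1}$-action.
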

\bp
Define the map 
\beq
\R^{1|1}_{>0}\times \R^{1|1}&\to& \R^{1|1}_{>0}\times \R^{1|1}\label{eq:Zequiv}\\
(\ell,\lambda,t,\theta)&\mapsto& (\ell,\lambda,t(\ell+i\lambda\theta),\theta+t\lambda),\qquad (\ell,\lambda)\in \R^{1|1}_{>0}(S),\ (t,\theta)\in \R^{1|1}(S).\nonumber
\eeq
Observe that~\eqref{eq:Zequiv} is $\Z$-equivariant for the action on the source and target given by
$$
n\cdot (\ell,\lambda,t,\theta)=(\ell,\lambda,t+n,\theta),\qquad n\cdot (\ell,\lambda,t,\theta)=(\ell,\lambda,t+n(\ell+i\lambda\theta),\theta+n\lambda)
$$
respectively. Hence~\eqref{eq:Zequiv} determines a map between the respective $\Z$-quotients, defining a map~\eqref{eq:isotostdS}. This is easily seen to be an isomorphism of supermanifolds. Since~\eqref{eq:isotostdS} is not locally determined by the action of $\Euc_{1|1}$ on $\R^{1|1}$, it is not a super Euclidean isometry. 
\ep

The following gives an $S$-point formula for the action of $\Euc_{1|1}$ on $\mathcal{S}^{1|1}$ and $\mathcal{M}^{1|1}=\R_{>0}^{1|1}$ coming from isometries between super Euclidean circles. 

\begin{lem}\label{lem:supercircleE11} Given $(\ell,\lambda)\in \R^{1|1}_{>0}(S)=\mathcal{M}^{1|1}(S)$ and $(s,\eta,\pm 1)\in (\E^{1|1}\rtimes \Z/2)(S)=\Euc_{1|1}(S)$, there is an isometry $f_{(s,\eta,\pm 1)}\colon S^{1|1}_{\ell,\lambda}\to S^{1|1}_{\ell',\lambda'}$ of super Euclidean circles over~$S$ sitting in the diagram
\beq
\begin{tikzpicture}[baseline=(basepoint)];
\node (A) at (0,0) {$S\times \R^{1|1}$};
\node (B) at (5,0) {$S\times \R^{1|1}$}; 
\node (C) at (0,-1.3) {$S^{1|1}_{\ell,\lambda}$};
\node (D) at (5,-1.3) {$S^{1|1}_{\ell',\lambda'}$};
\draw[->] (A) to node [above] {$(s,\eta,\pm 1)\cdot $} (B);
\draw[->] (A) to (C);
\draw[->] (C) to node [above] {$f_{(s,\eta,\pm 1)}$} (D);
\draw[->] (B) to  (D);
\path (0,-.75) coordinate (basepoint);
\end{tikzpicture}\nonumber
\eeq
where the upper horizontal arrow is determined by the left $\Euc_{1|1}$-action on $\R^{1|1}$, the left vertical arrow is the quotient map~\eqref{eq:supertorus} for $(\ell,\lambda)$, and the right vertical arrow is the quotient map for
\beq\label{eq:R11action}
(\ell',\lambda'):=(\ell\pm 2i\eta\lambda,\pm \lambda).
\eeq
\end{lem}

\bp
Consider the diagram
\beq
\begin{tikzpicture}[baseline=(basepoint)];
\node (A) at (0,0) {$\Z\times S\times \R^{1|1}$};
\node (B) at (5,0) {$\Z\times S\times \R^{1|1}$};
\node (C) at (0,-1.3) {$S\times \R^{1|1}$};
\node (D) at (5,-1.3) {$S\times \R^{1|1}$}; 
\draw[->] (A) to node [above] {$(s,\eta,\pm 1)\cdot $} (B);
\draw[->] (A) to node [left] {$(\ell,\lambda)\cdot $} (C);
\draw[->] (C) to node [below] {$(s,\eta,\pm 1)\cdot $} (D);
\draw[->] (B) to node [right] {$(\ell',\lambda')\cdot $} (D);
\path (0,-.75) coordinate (basepoint);
\end{tikzpicture}\label{eq:thediagram123}
\eeq
where the horizontal arrows denote the left action of $(s,\eta,\pm 1)\in \Euc_{1|1}(S)$ on $S\times \R^{1|1}$, while the vertical arrows denote the left $\Z$-action generated by $(\ell,\lambda),(\ell',\lambda')\in \R^{1|1}_{>0}(S)$. The square~\eqref{eq:thediagram123} commutes if and only if $(\ell',\lambda')=(s,\eta,\pm 1)\cdot (\ell, \lambda)\cdot (s,\eta,\pm 1)^{-1}\in \R^{1|1}_{>0}(S)\subset \E^{1|1}(S)$, i.e.,~\eqref{eq:R11action} holds. Commutativity of the diagram~\eqref{eq:thediagram123} gives a map on the $\Z$-quotients, which is precisely a map $S^{1|1}_{\ell,\lambda}\to S^{1|1}_{\ell',\lambda'}$. This map is locally determined by the action of $\E^{1|1}\rtimes \Z/2$, and hence respects the super Euclidean structures. 
\ep


\subsection{Super Euclidean loop spaces}

\begin{defn}\label{defn:sEucloop} Define the \emph{super Euclidean loop space} as the generalized supermanifold
$$
\mathcal{L}^{1|1}(M):=\R^{1|1}_{>0}\times \Map(S^{1|1},M).
$$ 
\end{defn}

We identify an $S$-point of $\mathcal{L}^{1|1}(M)$ with a map $S^{1|1}_{\ell,\lambda}\to M$ given by the composition 
\beq
S^{1|1}_{\ell,\lambda}\simeq  S\times S^{1|1}\to M\label{eq:superloop}
\eeq
by pulling back the isomorphism from Lemma~\ref{lem:torusstnd} along the map $(\ell,\lambda)\colon S\to \R^{1|1}_{>0}$.

We will define a left action of $\Euc_{1|1}$ on $\mathcal{L}^{1|1}(M)$ determined by the diagram
\beq
\begin{tikzpicture}[baseline=(basepoint)];
\node (A) at (0,0) {$S^{1|1}_{\ell,\lambda}$};
\node (B) at (3,0) {$S\times S^{1|1}$}; 
\node (C) at (0,-1.2) {$S^{1|1}_{\ell',\lambda'}$};
\node (D) at (3,-1.2) {$S\times S^{1|1}$};
\node (E) at (5,-.6) {$M$};
\draw[->] (A) to node [above] {$\simeq$} (B);
\draw[->] (A) to node [left] {$f$} (C);
\draw[->] (C) to node [above] {$\simeq$} (D);
\draw[->] (B) to node [above] {$\phi$} (E);
\draw[->,dashed] (D) to node [below] {$\phi'$} (E);
\path (0,-.6) coordinate (basepoint);
\end{tikzpicture}\label{eq:E11action}
\eeq
where the horizontal arrows are the pull back of the isomorphism in Lemma~\ref{lem:torusstnd} and the super Euclidean isometry $f$ is from Lemma~\ref{lem:supercircleE11} with $(\ell',\lambda')=(\ell\pm 2\eta\lambda,\pm \lambda)$. The arrow $\phi'$ is uniquely determined by these isomorphisms and the input map $\phi$. Hence, given $(\ell,\lambda,\phi)\in \R^{1|1}_{>0}(S)\times \Map(S^{1|1},M)(S)$ and an $S$-point of $\Euc_{1|1}$, the $\Euc_{1|1}$-action on $\mathcal{L}^{1|1}(M)$ outputs $(\ell',\lambda',\phi')$ as in~\eqref{eq:E11action}. 

\begin{rmk}\label{rmk:leftright}
Precomposition actions (such as the action of $\Euc_{1|1}$ on $\Map(S^{1|1},M)$ above) are most naturally right actions. Turning this into a left action involves inversion on the group: the formula for $\phi'$ in~\eqref{eq:E11action} involves $\phi$ and the \emph{inverse} of $f$. This inversion introduces signs in the formulas for the left $\Euc_{1|1}$-action on $\mathcal{L}^{1|1}_0(M)$ below. Our choice to work with left actions is consistent with Freed's conventions for classical supersymmetric field theories~\cite[pages~44-45]{5lectures}; see also~\cite[page~357]{strings1}. 
\end{rmk}

There is an evident $S^1$-action on $\mathcal{L}^{1|1}(M)$ coming from the precomposition action of $S^1=\E/\Z<\E^{1|1}/\Z$ on $\Map(S^{1|1},M)$. Since the quotient is given by~$S^{1|1}/S^1\simeq \R^{0|1}$, the $S^1$-fixed points are
\beq
\mathcal{L}^{1|1}_0(M):=\R^{1|1}_{>0}\times\Map(\R^{0|1},M)\subset \R^{1|1}_{>0}\times \Map(S^{1|1},M)=\mathcal{L}^{1|1}(M).\label{eq:constantloops}
\eeq
We identify an $S$-point of $\mathcal{L}^{1|1}_0(M)$ with a map $S^{1|1}_{\ell,\lambda}\to M$ that factors as
\beq
S^{1|1}_{\ell,\lambda}\simeq S\times S^{1|1}= S\times \R^{1|1}/\Z\stackrel{p}{\to} S\times \R^{0|1}\to M\label{eq:R01proj}
\eeq
where the map $p$ is induced by the projection $\R^{1|1}\to \R^{0|1}$. The action~\eqref{eq:E11action} preserves this factorization condition; we give an explicit formula in Lemma~\ref{lem:E11action} below. Hence, the inclusion~\eqref{eq:constantloops} is $\Euc_{1|1}$-equivariant. 

\begin{lem} \label{lem:11partiton}
There is a functor $\mathcal{L}_0^{1|1}(M)\to 1|1\EBord(M)$ that induces a restriction map 
\beq
{\rm restr}\colon 1|1\EFT(M)\to C^\infty(\mathcal{L}_0^{1|1}(M))^{\Euc_{1|1}}. \label{eq:claimed11}
\eeq
\end{lem}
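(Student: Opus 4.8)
The plan is to produce the functor $\mathcal{L}_0^{1|1}(M)\to 1|1\EBord(M)$ and then invoke the symmetric monoidal nature of a field theory together with the description of morphisms in the bordism category. First I would observe that every $S$-point of $\mathcal{L}_0^{1|1}(M)$ is, by~\eqref{eq:constantloops} and~\eqref{eq:R01proj}, a family of super Euclidean circles $S^{1|1}_{\ell,\lambda}\to S$ (carrying the canonical super Euclidean structure from Lemma~\ref{lem:supercircleEuc}) equipped with a map to $M$. Regarding this closed family as a bordism from the empty $0|1$-manifold to itself over $M$ — exactly as in the canonical functor $\mathcal{L}^{1|1}(M)\to 1|1\Bord(M)$ already fixed after~\eqref{eq:Ld1def}, composed with the inclusion $\mathcal{L}_0^{1|1}(M)\hookrightarrow\mathcal{L}^{1|1}(M)$ — gives the functor to $1|1\EBord(M)$; here one must check that the assignment is natural in $S$ and that isometries of families of super circles (which by Lemma~\ref{lem:supercircleE11} are controlled by the $\Euc_{1|1}$-action) go to isomorphisms of bordisms, which is immediate since isometries are by definition the morphisms in the relevant fibered category.

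Next I would define the restriction map. Given a field theory $E\in 1|1\EFT(M)=\Fun^\otimes(1|1\EBord(M),\mathcal{V})$, composing $E$ with the above functor assigns to each $S$-point of $\mathcal{L}_0^{1|1}(M)$ the value of $E$ on the corresponding closed bordism. Since a closed bordism is an endomorphism of the monoidal unit $\one$, and $E$ is monoidal so $E(\one)=\one=\mathbb{C}$ (in families, the structure sheaf $C^\infty(S)$), this value is an element of $\End(\one)=C^\infty(S)$. Naturality of $E$ in the $S$-variable — i.e.\ compatibility with pullback of bordisms along maps $S'\to S$ — shows these elements assemble into a well-defined element of $C^\infty(\mathcal{L}_0^{1|1}(M))$, using the Yoneda-type description of functions on a generalized supermanifold as a compatible family of functions on its $S$-points. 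This is the content I would phrase carefully: that evaluating a monoidal functor on the "tautological family" of closed bordisms parameterized by $\mathcal{L}_0^{1|1}(M)$ produces a global function, precisely because $E$ respects base change.

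Finally, for $\Euc_{1|1}$-invariance: the action~\eqref{eq:E11action} on an $S$-point of $\mathcal{L}_0^{1|1}(M)$ replaces the family $S^{1|1}_{\ell,\lambda}\to M$ by the isometric family $S^{1|1}_{\ell',\lambda'}\to M$ via the isomorphism of Lemma~\ref{lem:supercircleE11}; this isomorphism is an isometry of super Euclidean bordisms over $M$, hence an isomorphism in $1|1\EBord(M)$, and a functor sends isomorphic objects to equal endomorphisms of $\one$ (two isomorphic closed bordisms have the same partition function). Therefore the extracted function is constant along $\Euc_{1|1}$-orbits, i.e.\ lies in $C^\infty(\mathcal{L}_0^{1|1}(M))^{\Euc_{1|1}}$. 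I expect the main obstacle to be purely bookkeeping: making precise the "tautological family of bordisms over $\mathcal{L}_0^{1|1}(M)$" and verifying that monoidal functoriality genuinely yields a function on the generalized supermanifold (not just on each $S$-point separately) — this requires unwinding Stolz and Teichner's definition of $1|1\EBord(M)$ as a category internal to, or fibered over, supermanifolds, and checking the base-change compatibility of $E$ against it. The geometric inputs (Lemmas~\ref{lem:supercircleEuc} and~\ref{lem:supercircleE11}) do all the real work; no computation is needed beyond citing them.
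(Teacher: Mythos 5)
Your proposal is correct and follows essentially the same route as the paper: Lemma~\ref{lem:supercircleEuc} makes an $S$-point of $\mathcal{L}_0^{1|1}(M)$ into a closed super Euclidean bordism over $M$ via the factorization~\eqref{eq:R01proj}, evaluation of a monoidal functor on closed bordisms yields elements of $\End(\one)=C^\infty(S)$ assembling into a function, and Lemma~\ref{lem:supercircleE11} shows the $\Euc_{1|1}$-action is through isometries, hence isomorphisms of bordisms, forcing invariance. The only difference is cosmetic: you spell out the $\End(\one)$ bookkeeping that the paper delegates to the discussion preceding \cite[Definition~4.13]{ST11}.
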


\bp
The $1|1$-dimensional Euclidean bordism category over~$M$ is constructed by inputting the $1|1$-dimensional Euclidean geometry from Definition~\ref{defn:11superEuc} into the definition of a geometric bordism category~\cite[Definition~4.12]{ST11}. The result is a category~$1|1\EBord(M)$ internal to stacks on the site of supermanifolds; in particular $1|1\EBord(M)$ has a stack of morphisms consisting of proper families of $1|1$-dimensional Euclidean manifolds with a map to $M$, with additional decorations related to the source and target of a bordism. 

By Lemma~\ref{lem:supercircleEuc}, super Euclidean circles give examples of $S$-families of $1|1$-dimensional Euclidean manifolds. An $S$-point of $\mathcal{L}_0^{1|1}(M)$ therefore defines a proper $S$-family of $1|1$-Euclidean manifolds with a map to $M$ via~\eqref{eq:R01proj}. We can identify this with an $S$-family of morphisms in~$1|1\EBord(M)$ whose source and target are the empty supermanifold equipped with the unique map to $M$. This defines a functor $\mathcal{L}_0^{1|1}(M)\to 1|1\EBord(M)$ and a restriction map $1|1\EFT(M)\to C^\infty(\mathcal{L}_0^{1|1}(M))$. We refer to the discussion preceding~\cite[Definition~4.13]{ST11} for an explanation why the restriction to closed bordisms extracts a function from a field theory. 

Finally we argue that this restriction has image in $\Euc_{1|1}$-invariant functions. By definition, an isometry between $1|1$-dimensional Euclidean manifolds comes from the action of the super Euclidean group~$\Euc_{1|1}=\E^{1|1}\rtimes \Z/2$ on the open cover defining the super Euclidean manifold. By Lemma~\ref{lem:supercircleE11}, the action~\eqref{eq:E11action} on $\mathcal{L}_0^{1|1}(M)$ is therefore through super Euclidean isometries of super circles compatible with the maps to $M$. By definition, these isometries define isomorphisms between the bordisms~\eqref{eq:R01proj} in~$1|1\EBord(M)$. Functions on a stack are functions on objects invariant under the action of isomorphisms. Hence, the restriction $1|1\EFT(M)\to C^\infty(\mathcal{L}_0^{1|1}(M))$ necessarily takes values in functions invariant under $\Euc_{1|1}$, yielding the claimed map~\eqref{eq:claimed11}. 
\ep


\subsection{Computing the action of Euclidean isometries}

\begin{lem} \label{lem:E11action}
The left $\Euc_{1|1}$-action on $\R^{1|1}_{>0}\times \Map(\R^{0|1},M)$ is given by
\beq
(s,\eta,\pm 1)\cdot (\ell,\lambda,x,\psi)= \left(\ell\pm 2i\eta\lambda,\pm\lambda,x\pm (\frac{\lambda s}{\ell} -\eta)\psi, \pm e^{-i\frac{\eta\lambda}{\ell}} \psi) \right)\label{eq:lem:E11action}
\eeq
using notation for the functor of points
$$
(s,\eta,\pm 1)\in (\E^{1|1}\rtimes \Z/2)(S)\simeq \Euc_{1|1}(S),\quad (\ell,\lambda)\in \R^{1|1}_{>0}(S),$$$$ (x,\psi)\in \Pi TM(S)\simeq \Map(\R^{0|1},M)(S).
$$
\end{lem}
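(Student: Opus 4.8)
The first two components of~\eqref{eq:lem:E11action}, namely $(\ell',\lambda')=(\ell\pm 2i\eta\lambda,\pm\lambda)$, are exactly the conjugation formula~\eqref{eq:R11action}: by Lemma~\ref{lem:supercircleE11} this records how a super Euclidean circle is relabelled under~\eqref{eq:E11action}, so the real content of the lemma is the induced transformation $(x,\psi)\mapsto(x',\psi')$ of $\Map(\R^{0|1},M)\simeq\Pi TM$. The plan is to unwind the identifications in Lemma~\ref{lem:torusstnd}, \eqref{eq:R01proj} and~\eqref{eq:E11action}, compose the explicit coordinate formulas they involve, and read off the answer; the only genuine subtlety is the left-versus-right bookkeeping flagged in Remark~\ref{rmk:leftright}.

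First I would make the $S$-point $(\ell,\lambda,x,\psi)$ concrete. Combining the identification $S^{1|1}_{\ell,\lambda}\simeq S\times S^{1|1}$ of Lemma~\ref{lem:torusstnd} (pulled back along $(\ell,\lambda)$) with the projection $p$ and the map $x+\theta\psi$ of~\eqref{eq:R01proj}, the associated map $S^{1|1}_{\ell,\lambda}\to M$ is, on the covering supermanifold $S\times\R^{1|1}$, the algebra map $C^\infty(M)\to C^\infty(S\times\R^{1|1})$ sending $f\mapsto x(f)+\Theta\cdot\psi(f)$, where $\Theta$ is the distinguished $\Z$-invariant odd function through which this map factors; unwinding~\eqref{eq:Zequiv} gives $\Theta=\theta-(t/\ell)\lambda$, and invariance under the action~\eqref{eq:Zact} is a direct check.

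Next I would apply $g=(s,\eta,\pm 1)$. Because~\eqref{eq:E11action} turns the precomposition action into a left action (Remark~\ref{rmk:leftright}), the relevant isometry $S^{1|1}_{\ell',\lambda'}\to S^{1|1}_{\ell,\lambda}$ is the one produced by Lemma~\ref{lem:supercircleE11} from $g^{-1}$; on the covers it is the inverse of the action of $g$ on $\R^{1|1}$, namely $(t,\theta)\mapsto(t-s-i\eta\theta,\pm(\theta-\eta))$. Pulling $\Theta$ back along this map — using only $\lambda^2=\eta^2=0$ and anticommutativity of odd coordinates — produces a new odd function on $S\times\R^{1|1}$, and the transformed $S$-point is $f\mapsto x(f)+(\text{that function})\cdot\psi(f)$.

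Finally I would re-express the result in coordinates adapted to the new circle: the analogous distinguished odd function for $S^{1|1}_{\ell',\lambda'}$ is $\Theta'=\theta-(t/\ell')\lambda'$, which simplifies by the same nilpotent identities since $(\ell')^{-1}\lambda'=\pm\lambda/\ell$. Writing the pulled-back function as a correction to $x$ plus $\Theta'$ times a scalar factor on $\psi$, and matching the coefficients of $1$ and of $\theta$, reads off $x'=x\pm(\tfrac{\lambda s}{\ell}-\eta)\psi$ and $\psi'=\pm e^{-i\eta\lambda/\ell}\psi$, the exponential being $\pm(1-i\eta\lambda/\ell)$ because $(\eta\lambda)^2=0$. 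The coefficient of $t$ must then match identically; this is the computational shadow of the fact, already noted after~\eqref{eq:E11action}, that the action preserves the factorization through $p$, i.e.\ restricts to the $S^1$-fixed locus~\eqref{eq:constantloops}, and checking it is the natural consistency test and the place where any sign error would surface. I expect this sign-and-nilpotent bookkeeping, together with the inversion of Remark~\ref{rmk:leftright}, to be the only real obstacle; the argument is otherwise a direct computation with no conceptual content.
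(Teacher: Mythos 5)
Your proposal is correct and follows essentially the same route as the paper: your distinguished $\Z$-invariant odd function $\Theta=\theta-\lambda t/\ell$ is exactly the paper's projection $\tilde{p}_{\ell,\lambda}$, and the inversion bookkeeping of Remark~\ref{rmk:leftright} together with the nilpotent simplification $(\ell')^{-1}\lambda'=\pm\lambda/\ell$ is precisely how the paper extracts $(x',\psi')$. The only cosmetic difference is that you pull $\Theta$ back along $g^{-1}$ and re-expand in $\Theta'$, whereas the paper computes the induced dashed map on the quotient $S\times\R^{0|1}$ from the commutative square~\eqref{eq:R01map} and inverts at the last step; the underlying computation is identical.
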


\bp Let $p_{\ell,\lambda}\colon S^{1|1}_{\ell,\lambda}\to S\times \R^{0|1}$ denote the composition of the left three maps in~\eqref{eq:R01proj}. Given $(s,\eta,\pm 1)\in \Euc_{1|1}(S)$, $(\ell,\lambda)\in \R^{1|1}_{>0}(S)$ and $(x,\psi)\in \Pi TM(S)\simeq \Map(\R^{0|1},M)(S)$, the goal of the lemma is to compute formulas for $(\ell',\lambda')\in \R^{1|1}_{>0}(S)$ and $(x',\psi')\in \Pi TM(S)$ in the diagram
\beq
\begin{tikzpicture}[baseline=(basepoint)];
\node (A) at (0,0) {$S^{1|1}_{\ell,\lambda}$};
\node (B) at (4,0) {$S\times \R^{0|1}$};
\node (C) at (0,-1.4) {$S^{1|1}_{\ell',\lambda'}$};
\node (D) at (4,-1.4) {$S\times \R^{0|1}$}; 
\node (E) at (8,-.6) {$M$};
\draw[->] (A) to node [above] {$p_{\ell,\lambda}$} (B);
\draw[->] (A) to node [left] {$f_{s,\eta,\pm 1}$} (C);
\draw[->] (C) to node [above] {$p_{\ell',\lambda'}$} (D);
\draw[->,dashed] (B) to (D);
\draw[->] (B) to node [above] {$(x,\psi)$} (E);
\draw[->] (D) to node [below] {$(x',\psi')$} (E);
\path (0,-.75) coordinate (basepoint);
\end{tikzpicture}\label{eq:R01map}
\eeq
where the arrow labeled by $f_{s,\eta,\pm 1}$ denotes the isometry between super Euclidean circles from Lemma~\ref{lem:supercircleE11} for $(s,\eta,\pm 1) \in \Euc_{1|1}(S)$. Hence, we see that $(\ell',\lambda')$ is given by~\eqref{eq:R11action}. To compute $(x',\psi')$, we find a formula for the dashed arrow in~\eqref{eq:R01map}. To start, consider the map
 $$
\tilde{p}_{\ell,\lambda} \colon \R^{1|1}\times S \to \R^{0|1}\times S,\quad \tilde{p}_{\ell,\lambda}(t,\theta)=\theta-\lambda\frac{t}{\ell},
$$
which is part of the inverse to the isomorphism~\eqref{eq:Zequiv}. Indeed, we check the $\Z$-invariance condition for the action~\eqref{eq:Zact},
$$
\tilde{p}_{\ell,\lambda} (n\cdot (t,\theta))=\tilde{p}_{\ell,\lambda} (n\ell+t+in\lambda\theta,n\lambda+\theta)=n\lambda+\theta-\lambda\frac{n\ell+t+in\lambda\theta}{\ell}=\theta-\lambda\frac{t}{\ell}.
$$
Hence $\tilde{p}_{\ell,\lambda}$ determines a map $p_{\ell,\lambda}\colon S^{1|1}_{\ell,\lambda}\to S\times \R^{0|1}$, which is the map in~\eqref{eq:R01map}. 
From this we see that the dashed arrow in~\eqref{eq:R01map} is unique and determined by 
$$
\theta \mapsto \pm \left(\theta+\eta-\lambda\frac{s+i\eta\theta}{\ell}\right),\quad (s,\eta,\pm 1)\in \Euc_{1|1}(S), \ \theta\in  \R^{0|1}(S). 
$$
The left action~\eqref{eq:E11action} is given by (see Remark~\ref{rmk:leftright} for an explanation of the signs)
$$
(x+\theta\psi)\mapsto x\pm \left(\theta-\eta-\lambda\frac{-s-i\eta\theta}{\ell}\right)\psi=x\pm(\frac{\lambda s}{\ell}-\eta)\psi\pm \theta(1-i\frac{\eta\lambda}{\ell})\psi
$$
which is the claimed formula for $(x',\psi')$.  
\ep

Just as $\R$-actions on ordinary manifolds are determined by flows of vector fields, $\E^{1|1}$-actions on supermanifolds are determined by the flow of an odd vector field. This comes from differentiating a left $\E^{1|1}$-action at zero and considering the action by the element $Q$ of the super Lie algebra, using the notation from~\eqref{eq:Wick}. Odd vector fields on supermanifolds are precisely odd derivations on their functions. We note the isomorphism 
\beq
C^\infty(\mathcal{L}^{1|1}_0(M))&\simeq& C^\infty(\R^{1|1}_{>0}\times \Map(\R^{0|1},M))\simeq C^\infty(\R^{1|1}_{>0})\otimes \Omega^\bullet(M)\nonumber\\
&\simeq& C^\infty(\R_{>0})[\lambda]\otimes \Omega^\bullet(M)\label{eq:fnonsuperloop}
\eeq
where (in an abuse of notation) let $\ell,\lambda\in C^\infty(\R^{1|1}_{>0})$ denote the coordinate functions associated with the universal family of super circles $S=\R^{1|1}_{>0}\to \R^{1|1}_{>0} \subset \R^{1|1}$. In the above, we used that $C^\infty(\Map(\R^{0|1},M))\simeq \Omega^\bullet(M)$ and that~$C^\infty(S\times T)\simeq C^\infty(S)\otimes C^\infty(T)$ for supermanifolds $S$ and $T$ using the projective tensor product of Fr\'echet algebras, e.g., see~\cite[Example~49]{HST}. Let $\deg\colon \Omega^\bullet(M)\to \Omega^\bullet(M)$ denote the (even) degree derivation determined by $\deg(\omega)=k\omega$ for $\omega\in \Omega^k(M)$. 

\begin{lem}\label{lem:E11generator}
The left $\E^{1|1}$-action~\eqref{eq:lem:E11action} on $\mathcal{L}^{1|1}_0(M)$ is generated by the odd derivation
\beq
\widehat{Q}:=2i \lambda \frac{d}{d\ell}\otimes {\rm id}-\id\otimes \dR-i\frac{\lambda}{\ell}\otimes {\rm deg}\label{eq:Q11}
\eeq
using the identification of functions~\eqref{eq:fnonsuperloop} where $\dR$ is the de~Rham differential and ${\rm deg}$ is the degree derivation on differential forms. \end{lem}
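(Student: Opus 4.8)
The plan is to obtain $\widehat Q$ by differentiating the explicit left action~\eqref{eq:lem:E11action} at the identity of $\E^{1|1}$. Since $\E^{1|1}$ is a $1|1$-dimensional super Lie group whose Lie algebra is generated over its even part by the single odd element $Q$ (with $Q^2=i\partial_t$, cf.~\eqref{eq:Wick}), an $\E^{1|1}$-action on a generalized supermanifold is determined by its infinitesimal generator, which is an odd vector field $\widehat Q$ whose square $\tfrac12[\widehat Q,\widehat Q]$ necessarily generates the action of the $\E\subset\E^{1|1}$-subgroup; this odd vector field is recovered from the action map $a$ by extracting the coefficient of the odd group coordinate in $a^*$. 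So it suffices to carry out this differentiation using~\eqref{eq:lem:E11action} restricted to the identity component $\E^{1|1}=\E^{1|1}\times\{+1\}\subset\Euc_{1|1}$.

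Concretely, take an $S$-point $(s,\eta)\in\E^{1|1}(S)$ and compute the pullback along~\eqref{eq:lem:E11action} of the generators of $C^\infty(\mathcal{L}^{1|1}_0(M))\simeq C^\infty(\R_{>0})[\lambda]\otimes\Omega^\bullet(M)$ from~\eqref{eq:fnonsuperloop}. On the even coordinate $\ell$ of $\R^{1|1}_{>0}$ one has $\ell\mapsto\ell+2i\eta\lambda$, so the $\eta$-coefficient is $2i\lambda$, giving the summand $2i\lambda\,\tfrac{d}{d\ell}\otimes\id$; on the odd coordinate one has $\lambda\mapsto\lambda$, so $\widehat Q$ annihilates $\lambda$. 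For $\omega\in\Omega^\bullet(M)\simeq C^\infty(\Map(\R^{0|1},M))$, the action is pullback along the $S$-point $(x,\psi)\mapsto\bigl(x+(\tfrac{\lambda s}{\ell}-\eta)\psi,\ e^{-i\eta\lambda/\ell}\psi\bigr)=\bigl(x+(\tfrac{\lambda s}{\ell}-\eta)\psi,\ (1-i\tfrac{\eta\lambda}{\ell})\psi\bigr)$ of $\Pi TM$ (using $(\eta\lambda)^2=0$). By the dictionary of~\S\ref{sec:conventions} (see~\cite[\S3]{HKST}), an infinitesimal shift $x\mapsto x+c\psi$ by an odd function $c$ acts on forms as $c\cdot\dR$, and a rescaling $\psi\mapsto(1+b)\psi$ by an even function $b$ acts as $b\cdot\deg$; hence $\omega\mapsto\omega+(\tfrac{\lambda s}{\ell}-\eta)\dR\omega-i\tfrac{\eta\lambda}{\ell}\deg\omega+\dots$, and extracting the coefficient of $\eta$ gives $\widehat Q\omega=-\dR\omega-i\tfrac{\lambda}{\ell}\deg\omega$, i.e.\ the remaining two summands of~\eqref{eq:Q11}. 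Note the $s$-linear terms feed only into $\tfrac12[\widehat Q,\widehat Q]$, not into $\widehat Q$. Assembling the three computations yields~\eqref{eq:Q11}.

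As a consistency check one verifies that $\widehat Q^2=\tfrac12[\widehat Q,\widehat Q]$ equals $i$ times the vector field generating the $\E=\R$-subgroup action, namely the $s$-coefficient of $a^*$: using $\lambda^2=0$ and $\deg\circ\dR=\dR\circ\deg+\dR$, a short computation gives $\widehat Q^2\omega=i\tfrac{\lambda}{\ell}\dR\omega$ and $\widehat Q^2\ell=\widehat Q^2\lambda=0$, which matches $i$ times the generator read off from~\eqref{eq:lem:E11action} and the relation $Q^2=i\partial_t$ in~\eqref{eq:Wick}. The point requiring the most care is the sign bookkeeping: the signs built into the left action~\eqref{eq:lem:E11action} via inversion on the group (Remark~\ref{rmk:leftright}), the Koszul signs incurred when extracting the $\eta$-coefficient (since $\eta$, $\lambda$, and $\dR$ are all odd), and the precise normalization of the dictionary identifying infinitesimal $\R^{0|1}$-automorphisms with $\dR$ and $\deg$; the consistency check with $\widehat Q^2$ and~\eqref{eq:Wick} serves to pin these down.
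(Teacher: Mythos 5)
Your proposal is correct and follows essentially the same route as the paper: the paper likewise notes that $-\dR$ and $-\deg$ generate the $\E^{0|1}$- and $\R^\times$-actions on $\Pi TM$ and then obtains $\widehat Q$ by applying $\partial_\eta+\eta\partial_s$ to~\eqref{eq:lem:E11action} and evaluating at $(s,\eta)=0$, i.e.\ by extracting the $\eta$-coefficient exactly as you do. Your added verification that $\widehat Q^2=i\tfrac{\lambda}{\ell}\dR$ matches $i$ times the $s$-coefficient of the action is a sound extra sanity check consistent with~\eqref{eq:Wick}, but not a different method.
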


\begin{proof}
We recall that right invariant vector fields generate left actions, so that the infinitesimal action of $\E^{1|1}$ on $\mathcal{L}^{1|1}_0(M)$ is determined by the action of~$Q$. Furthermore, minus the de~Rham operator generates the left $\E^{0|1}$-action $(x,\psi)\mapsto (x-\eta\psi,\psi)$ on $\Pi TM$, and minus the degree derivation generates the left $\R^\times$-action $(x,\psi)\mapsto (x,u^{-1}\psi)$, e.g., see~\cite[\S3.4]{HKST}. Applying the derivation $Q=\partial_\eta+i\eta\partial_s$ to~\eqref{eq:lem:E11action} and evaluating at $(s,\eta)=0$ recovers~\eqref{eq:Q11}. 
\ep

\subsection{The proof of Proposition~\ref{prop:compute11}}\label{sec:prop11}

The $\Euc_{1|1}$-equivariant inclusion
$$
\R_{>0}\times \Map(\R^{0|1},M)\hookrightarrow \R^{1|1}_{>0}\times \Map(\R^{0|1},M)=\mathcal{L}^{1|1}_0(M)
$$ 
is along $S$-families of super circles with $\lambda=0$. So by Lemmas~\ref{lem:E11action} and~\ref{lem:E11generator} we have
$$
C^\infty(\R_{>0}\times \Map(\R^{0|1},M))^{\Euc_{1|1}}\simeq \Omega^\bullet(M;C^\infty(\R_{>0}))^{\E^{1|1}\rtimes \Z/2}\simeq \Omega^\ev_\cl(M;C^\infty(\R_{>0}))
$$
using~\eqref{eq:lem:E11action} to see that $\Z/2$ acts through the parity involution (so invariant functions are even forms) and~\eqref{eq:Q11} to see that the $\E^{1|1}$-action is generated by minus the de~Rham $\dR$ (so invariant functions are closed forms). This verifies the equality~\eqref{Eq:diffformdata} when $d=1$ and extracts the data~$Z$ from an element of $C^\infty(\mathcal{L}^{1|1}_0(M))^{\Euc_{1|1}}$. 

Next, observe that
\beq
C^\infty(\mathcal{L}^{1|1}_0(M))&\simeq& C^\infty(\R^{1|1}_{>0}\times \Map(\R^{0|1},M))\simeq \Omega^\bullet(M;C^\infty(\R^{1|1}_{>0}))\nonumber \\
&\simeq& \Omega^\bullet(M;C^\infty(\R_{>0})[\lambda]) \nonumber
\eeq
where the final isomorphism comes from Taylor expansion of functions on $\R^{1|1}_{>0}$ in the odd coordinate function $\lambda$. For convenience we choose the parameterization of functions
\beq
C^\infty(\mathcal{L}^{1|1}_0(M))\simeq \{\ell^{\deg/2}(Z+2i\lambda \ell^{1/2}Z_\ell)\mid Z,Z_\ell\in \Omega^\bullet(M;C^\infty(\R_{>0}))\},\label{eq:parameterize11}
\eeq
where $\ell^{\deg/2}\omega=\ell^{k/2}\omega$ for $\omega\in \Omega^k(M;C^\infty(\R_{>0}))$. We again have that $\Z/2<\Euc_{1|1}$ acts by the parity involution, so since $\lambda$ is odd and $\ell$ is even we find
$$
C^\infty(\mathcal{L}^{1|1}_0(M))^{\Z/2}=\left\{\ell^{\deg/2}(Z+2i\lambda \ell^{1/2}Z_\ell) \mid \begin{array}{l} Z\in \Omega^\ev(M;C^\infty(\R_{>0}))\\ Z_\ell\in \Omega^\odd(M;C^\infty(\R_{>0}))\end{array}\right\}.
$$
Next we compute
\beq
\widehat{Q}(\ell^{\deg/2}Z+2i\lambda \ell^{1/2}\ell^{\deg/2}Z_\ell)&=&2i\lambda\frac{d}{d\ell}(\ell^{\deg/2}Z)-\ell^{-1/2}\ell^{\deg/2}\dR Z-2i\lambda\ell^{\deg/2}\dR Z_\ell-i\frac{\lambda}{\ell}\ell^{\deg/2}\deg(Z)\nonumber\\
&=&-\ell^{-1/2}\ell^{\deg/2}\dR Z+2i\lambda\ell^{\deg/2}(\frac{dZ}{d\ell}-\dR Z_\ell)\nonumber
\eeq
where in the first equality we use that $\dR(\ell^{\deg/2}\omega)=\ell^{-1/2}\ell^{\deg/2}(\dR\omega)$, and in the second equality we expand $2i\lambda\frac{d}{d\ell}(\ell^{\deg/2}Z)$ using the product rule and then simplify. Hence
\beq
&&\widehat{Q}(\ell^{\deg/2}(Z+2i\lambda\ell^{1/2} Z_\ell))=0\quad \iff \quad \dR Z=0, \quad \dR Z_\ell=\frac{dZ}{d\ell}.\label{eq:Qclosed11}
\eeq
By Lemma~\ref{lem:E11generator}, $\widehat{Q}$ generates the $\E^{1|1}$-action and (since $\E^{1|1}$ is connected) $\widehat{Q}$-invariant functions are equivalent to $\E^{1|1}$-invariant functions. 
Finally, we identify even differential forms with elements of $\Omega^\bullet(M;C^\infty(\R_{>0})[\beta,\beta^{-1}])$ of total degree zero and odd differential forms with elements of $\Omega^\bullet(M;C^\infty(\R_{>0})[\beta,\beta^{-1}])$ of total degree~$-1$ (essentially replacing $\ell$ in~\eqref{eq:parameterize11} by $\beta$). This completes the proof of Proposition~\ref{prop:compute11}.

\subsection{Concordance classes of functions}\label{sec:refinecoccycle1}

For Proposition~\ref{mainprop} we require a refinement of the cocycle map.

\begin{defn}\label{defn:cocycle1}
Using the notation from Proposition~\ref{prop:compute11}, for each $\mu\in \R_{>0}$ define a map
$$
\widehat{\rm cocycle}_\mu\colon C^\infty(\mathcal{L}^{1|1}_0(M))^{\Euc_{1|1}}\to  (\Omega^\bullet_\cl(M;\C[\beta,\beta^{-1}]))^0,\qquad \widehat{\rm cocycle}_\mu(Z,Z_\ell)=Z(\mu),
$$
where $Z(\mu)$ denotes evaluation at $\mu\in \R_{>0}$ and $(\Omega^\bullet_\cl(M;\C[\beta,\beta^{-1}]))^0$ is the space of closed differential forms of total degree zero.  
\end{defn}

\begin{lem} \label{lem:independentofmu}
The composition
$$
C^\infty(\mathcal{L}^{1|1}_0(M))^{\Euc_{1|1}}\stackrel{\widehat{\rm cocycle}_\mu}{\longrightarrow}  (\Omega^\bullet_\cl(M;\C[\beta,\beta^{-1}]))^0\stackrel{{\rm de\ Rham}}{\longrightarrow} \H(M;\C[\beta,\beta^{-1}])
$$
agrees with~\eqref{eq:thm1} and hence is independent of $\mu$.
\end{lem}

\bp
The calculation~\eqref{eq:constantinell} shows 
$$
[\widehat{\rm cocycle}_\mu(Z,Z_\ell)]=[Z(\mu)]=[Z]={\rm cocycle}(Z,Z_\ell)\in \H(M;\C[\beta,\beta^{-1}])\subset \H(M;C^\infty(\R_{>0})[\beta,\beta^{-1}])
$$
In particular the class underlying $\widehat{\rm cocycle}_\mu(Z,Z_\ell)$ is independent of $\mu$. 
\ep

\begin{proof}[Proof of Proposition~\ref{mainprop}, $d=1$] 
Proposition~\ref{prop:compute11} implies that $M\mapsto C^\infty(\mathcal{L}^{1|1}_0(M))^{\Euc_{1|1}}$ is a sheaf on the site of smooth manifolds. The map in Definition~\ref{defn:cocycle1} is a morphism of sheaves
\beq
&&\widehat{\rm cocycle}_\mu\colon C^\infty(\mathcal{L}^{1|1}_0(-))^{\Euc_{1|1}}\to \Omega^\ev_\cl(-;\C[\beta,\beta^{-1}])\label{eq:takeconcordancethis}
\eeq
taking values in closed forms of even degree. By Stokes theorem, concordance classes of closed forms on a manifold $M$ are cohomology classes. Hence, taking concordance classes of the map~\eqref{eq:takeconcordancethis} applied to a manifold $M$ proves the proposition when $d=1$. 
\end{proof}

\subsection{The Chern character of a super connection}\label{sec:superconnection}

A \emph{super connection} $\A$ on a $\Z/2$-graded vector bundle $V\to M$ is an odd $\C$-linear map satisfying the Leibniz rule~\cite{Quillensuper} 
$$
\A\colon \Omega^\bullet(M;V)\to \Omega^\bullet(M;V), \quad  \A(fs)=\dR f\cdot s+(-1)^{|f|}f \A s,\quad f\in \Omega^\bullet(M), \ s\in \Omega^\bullet(M;V).
$$
One can express a super connection as a finite sum $\A=\sum_j \A_j$ where $\A_j\colon \Omega^\bullet(M;V)\to \Omega^{\bullet+j}(M;V)$ raises differential form degree by~$j$. Note that $\A_1$ is an ordinary connection on~$V$, and $\A_j$ is a differential form valued in $\End(V)^\odd$ if $j$ is even and $\End(V)^\ev$ if $j$ is odd. 
\emph{Super parallel transport} provides a functor, denoted ${\rm sPar}$, from the groupoid of $\Z/2$-graded vector bundles with super connection on $M$ to the groupoid of $1|1$-dimensional Euclidean field theories over~$M$
\beq
\begin{array}{ccccc} 
{\rm Vect}^\A(M) & \stackrel{{\rm sPar}}{\to}  & 1|1\EFT(M) & \stackrel{\rm res}{\to} & C^\infty(\R_{>0}\times \Map(\R^{0|1},M))^{\Euc_{1|1}}\\
(V,\A) & \mapsto & {\rm sPar}(V,\A) & \mapsto & {\rm sTr}(e^{\ell\A^2}).
\end{array}\label{eq:dumitmap}
\eeq
Part of this construction is given in~\cite{florin}, reviewed in~\cite[\S1.3]{ST11}. A different approach (satisfying stronger naturality properties required to construct the functor ${\rm sPar}$) is work in progress by Arnold~\cite{Bertram}. Evaluating the field theory ${\rm sPar}(V,\A)$ on closed bordisms determines the function ${\rm sTr}(e^{\ell\A^2})\in C^\infty(\R_{>0}\times \Map(\R^{0|1},M))$.
The parameterization~\eqref{eq:parameterize11} extracts the function~$Z$ determined by 
$$
\ell^{\deg/2}Z=\sTr(\exp(\ell\A^2)).
$$
Hence we find that $Z=\sTr(\exp(\A_\ell^2))$ for 
\beq
\A_\ell&=&\ell^{1/2}\A_0+\A_1+\ell^{-1/2}\A_2+\ell^{-1}\A_3+\dots\label{eq:famofsconn}
\eeq
The $\R_{>0}$-family of super connections~\eqref{eq:famofsconn} appears frequently in index theory, e.g., \cite{Quillensuper} and~\cite[\S9.1]{BGV}. By~\cite[Proposition~1.41]{BGV}, the failure for $Z$ to be independent of $\ell$ is measured by the exact form,
\beq
\frac{d}{d\ell}{\rm sTr}(e^{\A_\ell^2})=\dR\left(\sTr\big(\frac{d\A_\ell}{dt}  e^{\A_\ell^2}\big)\right).\label{eq:transgression}
\eeq
By Proposition~\ref{prop:compute11}, the data $Z={\rm sTr}(e^{\A_\ell^2})$ and $Z_\ell=\sTr(\frac{d\A_\ell}{dt}  e^{\A_\ell^2})$ determine an element of $C^\infty(\mathcal{L}^{1|1}_0(M))^{\Euc_{1|1}}$ refining the Chern character of the $\Z/2$-graded vector bundle~$V$. 

\begin{rmk}\label{rmk:Fei} If $\A=\nabla$ is an ordinary connection, the family~\eqref{eq:famofsconn} is independent of~$\ell$ and $Z_\ell=0$. This recovers Fei Han's~\cite{Han} identification of the Chern form ${\rm Tr}(\exp(\nabla^2))$ with dimensional reduction of the $1|1$-dimensional Euclidean field theory~${\rm sPar}(V,\nabla)$.
\end{rmk}


\section{A map from $2|1$-Euclidean field theories to complexified elliptic cohomology}
The main goal of this section is to prove Proposition~\ref{prop:compute21}. From the discussion in~\S\ref{sec:outline}, this proves Theorem~\ref{thm} when $d=2$. We also prove Proposition~\ref{mainprop} when $d=2$ and comment on connections with a de~Rham model for complex analytic elliptic cohomology, complexified TMF, and elliptic Euler classes. 


\subsection{The moduli space of super Euclidean tori}

We will use the two equivalent descriptions of $S$-points of $\R^{2|1}$
\beq
\R^{2|1}(S)&\simeq& \{x,y \in C^\infty(S)^\ev,\ \theta \in C^\infty(S)^\odd\mid (x)_{\rm red}=\overline{(x)}_{\rm red}, (y)_{\rm red}=\overline{(y)}_{\rm red}\}\label{eq:r211}\\
&\simeq& \{z,w \in C^\infty(S)^{\ev}, \theta\in C^\infty(S)^\odd\mid (z)_{\rm red}=\overline{(w)}_{\rm red}\},\label{eq:r212}
\eeq
where reality conditions are imposed on restriction of functions to the reduced manifold $S_{\rm red}\hookrightarrow S$. The isomorphism between~\eqref{eq:r211} and~\eqref{eq:r212} is $(x,y)\mapsto (x+iy,x-iy)=(z,w)$. Below we shall adopt the standard (though potentially misleading) notation $\overline{z}:=w$. We take similar notation for $S$-points of $\Spin(2)$, using the identification $\Spin(2)\simeq U(1)\subset \C$ with the unit complex numbers. This gives the description
\beq
&&\Spin(2)(S)\simeq U(1)(S)=\{\dil,\bar\dil\in C^\infty(S)^{\ev}\mid (\dil)_{\rm red}=\overline{(\bar\dil)}_{\rm red}, \  \dil \bar\dil=1 \}.\label{Eq:spin2}
\eeq

\begin{defn}
Let~$\E^{2|1}$ denote the super Lie group with underlying supermanifold $\R^{2|1}$ and multiplication
\beq
&&(z,\bar z,\theta)\cdot (z',\bar z',\theta')=(z+z',\bar z+\bar z'+\theta\theta',\theta+\theta'), \quad (z,\bar z,\theta),(z',\bar z',\theta')\in \R^{2|1}(S).\label{eq:E21mult}
\eeq
Define the \emph{super Euclidean group} as $\E^{2|1}\rtimes \Spin(2)$ where the semidirect product is defined by the action (using the notation~\eqref{Eq:spin2})
$$
(\dil,\bar\dil)\cdot (z,\bar z,\theta)=(\dil^2 z,\bar\dil^2 \bar z,\bar\dil \theta),\qquad (\dil,\bar\dil)\in \Spin(2)(S).
$$
\end{defn} 

The Lie algebra of $\E^{2|1}$ has one even generator and one odd generator. In terms of left invariant vector fields, these are $\partial_z$ and $D=\partial_\theta-\theta\partial_{\bar z}$, whereas in terms of right-invariant vector fields they are $\partial_z$ and $Q=\partial_\theta+\theta\partial_{\bar z}$. The super commutators are
\beq
[\partial_z,D]=0, \ [D,D]=-\partial_{\bar z}\quad {\rm and} \quad [\partial_z,Q]=0,\ [Q,Q]=\partial_{\bar z}. \label{eq:21Lie}
\eeq

Let $\Lat\subset \C\times \C$ denote the manifold of \emph{based lattices} in~$\C$ parameterizing pairs of nonzero complex numbers $\ell_1,\ell_2\in \C^\times$ such that $\ell_1/\ell_2\in \HH\subset \C$ is in the upper half plane. Equivalently, the pair $(\ell_1,\ell_2)$ generate a based oriented lattice in~$\C$. We observe that $(\ell_1,\ell_2)\mapsto (\ell_1,\ell_1/\ell_2)$ defines a diffeomorphism $\Lat\simeq \C^\times\times \HH$, so that $\Lat$ is indeed a manifold. When regarding $\Lat$ as a supermanifold, an $S$-point is specified by $(\ell_1,\bar\ell_1,\ell_2,\bar\ell_2)\in \Lat(S)\subset (\C\times \C)(S)$, following the notation from~\eqref{eq:r212}. 

\begin{defn}\label{defn:sLat} Define the generalized supermanifold of \emph{based (super) lattices} in $\R^{2|1}$ as the subfunctor $s\Lat\subset \R^{2|1}\times \R^{2|1}$ (viewing $\R^{2|1}\times \R^{2|1}$ as a representable presheaf) whose $S$-points are $(\ell_1,\bar\ell_1,\lambda_1),(\ell_2,\bar\ell_2,\lambda_2)\in \R^{2|1}(S)$ such that:
\begin{enumerate}
\item The pair commute for the multiplication~\eqref{eq:E21mult} on $\E^{2|1}(S)\simeq \R^{2|1}(S)$,
$$
(\ell_1,\bar\ell_1,\lambda_1)\cdot (\ell_2,\bar\ell_2,\lambda_2)=(\ell_2,\bar\ell_2,\lambda_2)\cdot(\ell_1,\bar\ell_1,\lambda_1)\in \E^{2|1}(S).
$$
\item The reduced map $S_\red\to (\R^{2|1}\times \R^{2|1})_{\rm red}\simeq \R^2\times \R^2\simeq \C\times \C$ determines a family of based oriented lattices in~$\C$, i.e., the image lies in $\Lat\subset \C\times \C$. 
\end{enumerate}
\end{defn}

\begin{rmk}\label{rmk:21subgroup} We observe that (1) is equivalent to requiring that $(\ell_1,\bar\ell_1,\lambda_1),(\ell_2,\bar\ell_2,\lambda_2)\in \E^{2|1}(S)$ generate a $\Z^2$-subgroup, i.e., a homomorphism $S\times \Z^2\to S\times \E^{2|1}$ over $S$. 
\end{rmk}

\begin{defn}\label{Defn:supertorus}
Given an $S$-point $\Lambda=((\ell_1,\bar\ell_1,\lambda_1),(\ell_2,\bar\ell_2,\lambda_2))\in s\Lat(S)$, define the family of $2|1$-dimensional \emph{super tori} as the quotient
\beq
T^{2|1}_{\Lambda}:=(S\times \R^{2|1})/\Z^2\label{eq:supertorus}
\eeq
for the free left $\Z^2$-action over $S$ determined by the 
formula
\beq
(n,m)\cdot (z,\bar z,\theta)&=& (z+n\ell_1+m\ell_2,\bar z+n(\bar\ell_1+\lambda_1\theta)+m(\bar\ell_2+\lambda_2\theta),n\lambda_1+m\lambda_2+\theta),\nonumber\\
&& (n,m)\in \Z^2(S), (z,\bar z,\theta)\in \R^{2|1}(S).\label{eq:Z2act}
\eeq
Equivalently this is the restriction of the left $\E^{2|1}$-action on $S\times \R^{2|1}$ to the $S$-family of subgroups $S\times \Z^2\subset S\times \E^{2|1}$ with generators over $S$ specified by $(\ell_1,\bar\ell_1,\lambda_1)$ and $(\ell_2,\bar\ell_2,\lambda_2)$. Define the \emph{standard super torus} as $T^{2|1}=\R^{2|1}/\Z^2$ for the quotient by the action for the standard inclusion $\Z^2\subset \R^2\subset \E^{2|1}$, i.e., for the square lattice. \end{defn}

\begin{rmk} The $S$-family of subgroups $S\times \Z^2\hookrightarrow S\times \E^{2|1}$ determined by $\Lambda$ (as in Remark~\ref{rmk:21subgroup}) is normal if and only if $\lambda_1=\lambda_2=0$. Hence, although the standard super torus~$T^{2|1}$ inherits a group structure from $\E^{2|1}$, generic super tori $T^{2|1}_{\Lambda}$ do not. 
\end{rmk}
\begin{rmk} There is a more general notion of a family of super tori where the action~\eqref{eq:Z2act} also incorporates pairs of elements in $\Spin(2)$. This moduli space has connected components corresponding to choices of spin structure on an ordinary torus, with the component from Definition~\ref{Defn:supertorus} corresponding to the odd (or periodic-periodic) spin structure. This turns out to be the relevant component of the moduli space to recover complex analytic elliptic cohomology. 
\end{rmk}


Stolz and Teichner's $(\mathbb{M},G)$-structures are discussed before Definition~\ref{defn:11superEuc}. 

\begin{defn}[\cite{ST11} \S4.2]\label{defn:21superEuc}
A \emph{super Euclidean} structure on a $2|1$-dimensional family $T\to S$ is an $(\mathbb{M},G)$-structure for the left action of $G=\E^{2|1}\rtimes \Spin(2)$ on~$\mathbb{M}=\R^{2|1}$. 
\end{defn}

\begin{lem} \label{lem:supertorusEuc}
An $S$-family of super tori~\eqref{eq:supertorus} has a canonical super Euclidean structure. 
\end{lem}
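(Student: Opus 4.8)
The plan is to mirror exactly the argument used for super circles in Lemma~\ref{lem:supercircleEuc}, now for the two-dimensional lattice action. An $S$-family of super tori $T^{2|1}_\Lambda = (S\times\R^{2|1})/\Z^2$ comes equipped with a canonical open cover, namely the quotient map $q\colon S\times\R^{2|1}\to T^{2|1}_\Lambda$: locally $q$ restricts to isomorphisms onto open sub-supermanifolds of $S\times\R^{2|1} = S\times\mathbb{M}$ with $\mathbb{M}=\R^{2|1}$, so the first datum of an $(\mathbb{M},G)$-structure is present. The transition data is then read off from the deck transformations of this covering, which by Definition~\ref{Defn:supertorus} are precisely the elements $(n,m)\cdot(-) $ for $(n,m)\in\Z^2(S)$, acting via the restriction of the left $\E^{2|1}$-action to the $S$-family of subgroups $S\times\Z^2\hookrightarrow S\times\E^{2|1}$ generated by $(\ell_1,\bar\ell_1,\lambda_1)$ and $(\ell_2,\bar\ell_2,\lambda_2)$ (see Remark~\ref{rmk:21subgroup}).

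First I would note that, because condition~(1) in Definition~\ref{defn:sLat} guarantees the two generators commute, the $\Z^2$-action on $S\times\R^{2|1}$ is by a genuine homomorphism $S\times\Z^2\to S\times\E^{2|1}$, hence each deck transformation is literally given by the left $G$-action on $\mathbb{M}=\R^{2|1}$ with $G=\E^{2|1}\rtimes\Spin(2)$ (the $\Spin(2)$-factor entering trivially here). Thus the transition maps $g_{ij}\colon V_i\cap V_j\to G$ land in $\E^{2|1}\subset G$ and are compatible with the local charts $\varphi_i$ by construction. The cocycle condition for this transition data is automatic: it is the associativity/group law in $\Z^2$ (equivalently in $\E^{2|1}$), since any triple overlap corresponds to a relation among deck transformations, and the $\Z^2$-action being a group action makes these relations hold on the nose. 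Therefore the quotient $T^{2|1}_\Lambda\to S$ inherits a $(\mathbb{M},G)$-structure, i.e.\ a $2|1$-dimensional super Euclidean structure in the sense of Definition~\ref{defn:21superEuc}.

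The only point requiring a little care — and the step I expect to be the main (mild) obstacle — is verifying that the deck transformations do act through \emph{isometries}, i.e.\ through the left $G$-action on $\mathbb{M}$ rather than merely being some abstract automorphisms of the cover. This is exactly the content of the ``Equivalently'' clause in Definition~\ref{Defn:supertorus}: the $\Z^2$-action is by definition the restriction of the global left $\E^{2|1}$-action on $S\times\R^{2|1}$, so each generator acts as left multiplication by the fixed element $(\ell_i,\bar\ell_i,\lambda_i)\in\E^{2|1}(S)$, which is manifestly an element of $G(S)$ acting on $\mathbb{M}=\R^{2|1}$ in the prescribed way. One should also observe that, although the family of subgroups $S\times\Z^2$ need not be normal in $S\times\E^{2|1}$ (it is normal iff $\lambda_1=\lambda_2=0$), normality is irrelevant for the construction of the $(\mathbb{M},G)$-structure: what matters is only that the deck transformations lie in $G$, not that they are central or normal.

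To summarize the proof I would write: take the quotient cover $q\colon S\times\R^{2|1}\to T^{2|1}_\Lambda$ as the defining atlas; take as transition data the $\Z^2$-deck transformations, which by Definition~\ref{Defn:supertorus} are given by the left $\E^{2|1}\subset G$-action on $\mathbb{M}=\R^{2|1}$; observe that commutativity of the generators (Definition~\ref{defn:sLat}(1)) makes these a consistent system of transition maps satisfying the cocycle condition; conclude by Definition~\ref{defn:21superEuc} that $T^{2|1}_\Lambda\to S$ carries a canonical $2|1$-dimensional super Euclidean structure, with naturality in $S$ being immediate since every ingredient is pulled back along maps $S'\to S$. This is the verbatim two-dimensional analogue of Lemma~\ref{lem:supercircleEuc}, and no new ideas beyond tracking the second lattice generator are needed.
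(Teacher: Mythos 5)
Your proposal is correct and follows exactly the paper's argument: the paper's proof of this lemma simply cites the super circle case (Lemma~\ref{lem:supercircleEuc}) and notes that one uses the open cover $S\times\R^{2|1}\to T^{2|1}_\Lambda$ with transition data from the $\Z^2$-action~\eqref{eq:Z2act}, which acts through super Euclidean isometries by construction. Your additional remarks on the cocycle condition and the irrelevance of normality are consistent elaborations of the same argument.
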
 
\bp
The proof is the same as for Lemma~\ref{lem:supercircleEuc}, using the open cover $S\times \R^{2|1}\to T^{2|1}_\Lambda$ and transition data from the $\Z^2$-action~\eqref{eq:Z2act}. 
\ep

We observe that every family of super tori pulls back from the universal family $s\Lat\times \R^{2|1})/\Z^2\to s\Lat$ along a map $S\to s\Lat$. Hence, we regard
$$
\mathcal{M}^{2|1}:=s\Lat,\qquad \mathcal{T}^{2|1}:=(s\Lat\times \R^{2|1})/\Z^2\to s\Lat
$$
as the moduli space of super Euclidean tori and the universal family of super Euclidean tori, respectively. 
The following identifies $s\Lat$ with the moduli space of super Euclidean structures on the standard super torus.

\begin{lem} \label{lem:torusstndd} There exists an isomorphism of supermanifolds over~$s\Lat$,
\beq
&&s\Lat\times T^{2|1}\stackrel{\sim}{\to} \mathcal{T}^{2|1}, \label{eq:isotostd}
\eeq
from the constant $s\Lat$-family with fiber the standard super torus to the universal family of super Euclidean tori. This isomorphism does not preserve the super Euclidean structure on~$\mathcal{T}^{2|1}$.
\end{lem}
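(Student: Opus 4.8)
The plan is to mimic the proof of Lemma~\ref{lem:torusstnd} in the circle case, trivializing the universal family by straightening out the $\Z^2$-action that defines $\mathcal{T}^{2|1}$. Concretely, over $s\Lat$ I want to produce a $\Z^2$-equivariant isomorphism of the trivial $\R^{2|1}$-bundle with itself, intertwining the \emph{standard} $\Z^2$-action (coming from the square lattice $\Z^2\subset \R^2\subset \E^{2|1}$) with the $\Z^2$-action \eqref{eq:Z2act} generated by the tautological point $\Lambda=((\ell_1,\bar\ell_1,\lambda_1),(\ell_2,\bar\ell_2,\lambda_2))\in s\Lat(s\Lat)$. Passing to quotients then gives \eqref{eq:isotostd}. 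The key point is that the even part is already easy — the $\R$-linear map of $\R^2\simeq\C$ sending the standard basis to $(\ell_1,\ell_2)$ does the job on the bosonic coordinates — so all the work is in finding the correction involving the odd coordinate $\theta$ and the odd lattice data $\lambda_1,\lambda_2$.

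First I would write down the candidate map explicitly in the functor-of-points description. Writing an $S$-point of the source as $(z,\bar z,\theta)\in\R^{2|1}(S)$ together with $\Lambda\in s\Lat(S)$, I would send it to $(z',\bar z',\theta')$ where $z'$ is the $\R$-linear combination of the real and imaginary parts of $z$ with coefficients built from $\ell_1,\ell_2$ (so that $z'= x\ell_1 + y\ell_2$ if $z=x+iy$, and $\bar z'$ its conjugate in the sense $\overline{z'}:=w'$), and $\theta' = \theta + (\text{linear in }x,y)\cdot(\lambda_1,\lambda_2) + (\text{possibly }\theta\text{-quadratic correction})$, mirroring the form $(\ell,\lambda,t,\theta)\mapsto (\ell,\lambda,t(\ell+i\lambda\theta),\theta+t\lambda)$ of \eqref{eq:Zequiv}. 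The crucial check — exactly as in the display following \eqref{eq:Zequiv} — is $\Z^2$-equivariance: applying $(n,m)$ to the source by the standard action must, after this map, reproduce \eqref{eq:Z2act}. This reduces to an algebraic identity in $\E^{2|1}(S)$ using the multiplication \eqref{eq:E21mult}, and this is where Definition~\ref{defn:sLat}(1) — commutativity of $\Lambda_1,\Lambda_2$ in $\E^{2|1}(S)$ — is used: it guarantees the two generators can be applied in either order, which is precisely what makes the straightened action a group action and what the equivariance of the map encodes. Having established equivariance, I then note the map is invertible (its even part is invertible because $(\ell_1,\ell_2)$ is an oriented, hence nondegenerate, basis by Definition~\ref{defn:sLat}(2), and the odd correction is unipotent in the odd direction), descend it to the $\Z^2$-quotients to get \eqref{eq:isotostd}, and observe it is manifestly not of the form ``locally the $\E^{2|1}\rtimes\Spin(2)$-action,'' so it does not respect the super Euclidean structure.

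I expect the main obstacle to be pinning down the correct $\theta$-dependent correction term in $z'$ (the analogue of the $i\lambda\theta$ appearing in $t(\ell+i\lambda\theta)$) so that equivariance holds \emph{on the nose}, rather than up to an error that one then has to absorb. The subtlety is that \eqref{eq:Z2act} twists $\bar z$ by both generators, $\bar z\mapsto \bar z + n\lambda_1\theta + m\lambda_2\theta$, so the straightening map must introduce a matching $\theta$-linear term in $\bar z'$; getting the coefficients right requires carefully tracking how the $\R$-linear change of bosonic coordinates interacts with these odd shifts, and verifying consistency uses the commutativity relation from Definition~\ref{defn:sLat}(1) in an essential way (the $\theta\theta'$ term in \eqref{eq:E21mult} is what makes the order of application matter a priori). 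Once that term is correctly identified the rest — invertibility, descent to quotients, and the remark that the super Euclidean structure is not preserved — is routine and parallels Lemma~\ref{lem:torusstnd} verbatim.
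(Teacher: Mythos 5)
Your proposal follows the paper's proof essentially verbatim: the paper constructs exactly the straightening map you describe, namely $(x,y,\theta)\mapsto (\ell_1 x+\ell_2 y,\ x(\bar\ell_1+\lambda_1\theta)+y(\bar\ell_2+\lambda_2\theta),\ \theta+x\lambda_1+y\lambda_2)$, checks $\Z^2$-equivariance against~\eqref{eq:Z2act} (where, as you anticipated, the relation $\lambda_1\lambda_2=0$ from Definition~\ref{defn:sLat}(1) is what makes the $\theta$-linear terms match), and descends to the quotients. The one detail you leave open --- the coefficients of the odd correction, which lands in $\bar z'$ rather than $z'$ --- is pinned down exactly as you propose, by demanding on-the-nose equivariance.
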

\bp
Define the map 
\beq
s\Lat \times \R^{2|1}&\to& s\Lat \times \R^{2|1}\label{eq:Z2equiv}\\
(\ell_1,\bar\ell_1,\lambda_1,\ell_2,\bar\ell_2,\lambda_2,x,y,\theta)&\mapsto& (\ell_1,\bar\ell_1,\lambda_1,\ell_2,\bar\ell_2,\lambda_2,\nonumber \\&& \ell_1 x+\ell_2 y,x(\bar \ell_1+\lambda_1\theta)+y(\bar \ell_2+\lambda_2\theta),\theta+x\lambda_1+y\lambda_2),\nonumber\\
&&(\ell_1,\bar\ell_1,\lambda_1,\ell_2,\bar\ell_2,\lambda_2) \in s\Lat (S),\  (x,y,\theta)\in \R^{2|1}(S).\nonumber
\eeq
where the source uses~\eqref{eq:r211} to specify an $S$-point $(x,y,\theta)\in \R^{2|1}(S)$ whereas the target uses~\eqref{eq:r212}. 
Observe that~\eqref{eq:Z2equiv} is $\Z^2$-equivariant for the actions on the source and target,
\beq
(n,m)\cdot (\ell_1,\bar\ell_1,\lambda_1,\ell_2,\bar\ell_2,\lambda_2,x,y,\theta)&=&(\ell_1,\bar\ell_1,\lambda_1,\ell_2,\bar\ell_2,\lambda_2,x+n,y+m,\theta), \nonumber\\
(n,m)\cdot (\ell_1,\bar\ell_1,\lambda_1,\ell_2,\bar\ell_2,\lambda_2,z,\bar z,\theta)&=&(\ell_1,\bar\ell_1,\lambda_1,\ell_2,\bar\ell_2,\lambda_2,z+n\ell_1+m\ell_2,\nonumber\\
&&\bar z+n(\bar\ell_1+\lambda_1\theta)+m(\bar\ell+\lambda_2\theta),\theta+n\lambda+m\lambda_2),\nonumber
\eeq
respectively. Hence~\eqref{eq:Z2equiv} determines a map between the respective $\Z^2$-quotients, defining a map~\eqref{eq:isotostd}. This map is easily seen to be an isomorphism of supermanifolds. Since the map~\eqref{eq:isotostd} is not locally determined by the action of $\E^{2|1}\rtimes \Spin(2)$ on $\R^{2|1}$, it is not a super Euclidean isometry. 
\ep

\begin{defn}
Define the super Lie group $\Euc_{2|1}:=\E^{2|1}\rtimes \Spin(2)\times \SL_2(\Z)$. 
\end{defn}

The following gives an $S$-point formula for the action of $\Euc_{2|1}$ on~$\mathcal{T}^{2|1}$ and $\mathcal{M}^{2|1}=s\Lat$ coming from isometries between super Euclidean tori. 


\begin{lem}\label{lem:torusE21} Given $\Lambda=((\ell_1,\bar\ell_1,\lambda_1),(\ell_2,\bar\ell_2,\lambda_2))\in s\Lat(S)=\mathcal{M}^{2|1}(S)$, $(w,\bar w,\eta,\dil,\bar\dil)\in (\E^{2|1}\rtimes \Spin(2))(S)$, and $\gamma\in \SL_2(\Z)(S)$, there is an isomorphism $f_{(w,\bar w,\eta,\dil,\bar\dil)}\colon T^{2|1}_{\Lambda}\to T^{2|1}_{\Lambda'}$ of super Euclidean tori over $S$ sitting in the diagram
\beq
\begin{tikzpicture}[baseline=(basepoint)];
\node (A) at (0,0) {$S\times \R^{2|1}$};
\node (B) at (5,0) {$S\times \R^{2|1}$}; 
\node (C) at (0,-1.3) {$T^{2|1}_{\Lambda}$};
\node (D) at (5,-1.3) {$T^{2|1}_{\Lambda'}$};
\draw[->] (A) to node [above] {$(w,\bar w,\eta,\dil,\bar\dil)\cdot $} (B);
\draw[->] (A) to (C);
\draw[->] (C) to node [above] {$f_{(w,\bar w,\eta,\dil,\bar\dil)}$} (D);
\draw[->] (B) to  (D);
\path (0,-.75) coordinate (basepoint);
\end{tikzpicture}\nonumber
\eeq
where the upper horizontal arrow is determined by the left $\E^{2|1}\rtimes \Spin(2)$-action on $\R^{2|1}$, the left vertical arrow is the quotient map~\eqref{eq:supercircle} for $\Lambda$, and the right vertical arrow is the quotient map for
\beq\label{eq:R21action}
\Lambda'&:=&
\left(\begin{array}{c} \big(\dil^2(a\ell_1+b\ell_2),\bar{\dil}^2(a(\bar{\ell}_1+2\eta\lambda_1)+b(\bar{\ell}_2+2\eta\lambda_2)),\bar{\dil}(a\lambda_1+b\lambda_2)\big), \\ 
\big(\dil^2(c\ell_1+d\ell_2),\bar{\dil}^2(c(\bar{\ell}_1+2\eta\lambda_1)+d(\bar{\ell}_2+2\eta\lambda_2)),\bar{\dil}(c\lambda_1+d\lambda_2)\big)\end{array}\right)
\eeq
where $\gamma=\left[\begin{array}{cc} a & b \\ c & d\end{array}\right]\in \SL_2(\Z)(S)$. 
\end{lem}
%

\bp
Consider the diagram
\beq
\begin{tikzpicture}[baseline=(basepoint)];
\node (A) at (0,0) {$\Z^2\times S\times \R^{2|1}$};
\node (B) at (5,0) {$\Z^2\times S\times \R^{2|1}$};
\node (C) at (0,-1.3) {$S\times \R^{2|1}$};
\node (D) at (5,-1.3) {$S\times \R^{2|1}.$}; 
\draw[->] (A) to node [above] {$\gamma\times (w,\bar w,\eta,\dil,\bar\dil)$} (B);
\draw[->] (A) to node [left] {$\Lambda$} (C);
\draw[->] (C) to node [below] {$(w,\bar w,\eta,\dil,\bar\dil)$} (D);
\draw[->] (B) to node [right] {$\Lambda'$} (D);
\path (0,-.75) coordinate (basepoint);
\end{tikzpicture}\label{eq:thediagram1234}
\eeq
The horizontal arrows are determined by the left action of $(w,\bar w,\eta)\in \E^{2|1}(S)$, $(\dil,\bar\dil)\in \Spin(2)(S)$ on $S\times \R^{2|1}$ and a map $S\times \Z^2\to S\times \Z^2$ specified by $\gamma\in \SL_2(\Z)(S)$. The vertical arrows are the $\Z^2$-action on $S\times \R^{2|1}$ generated by $\Lambda,\Lambda'\in s\Lat(S)$. Using~\eqref{eq:E21mult}, this square commutes if and only if~\eqref{eq:R21action} holds. Commutativity of~\eqref{eq:thediagram1234} gives a map on the $\Z^2$-quotients, which is precisely a map $T^{2|1}_\Lambda\to T^{2|1}_{\Lambda'}$. This map is locally given by the action of $\E^{2|1}\rtimes \Spin(2)$ on $\R^{2|1}$, so by construction it respects the super Euclidean structures. 
\ep

We will require an explicit description of functions on $s\Lat$, i.e., the morphisms of presheaves $s\Lat\to C^\infty$. 
Regarding $\Lat$ as a representable presheaf on supermanifolds, there is an evident monomorphism $\Lat\hookrightarrow s\Lat$ from the canonical inclusion $\C\times \C\simeq \R^2\times \R^2\hookrightarrow \R^{2|1}\times \R^{2|1}$. In the following, let $\lambda_1,\lambda_2\in C^\infty(s\Lat)$ denote the restriction of the odd coordinate functions $C^\infty(\R^{2|1}\times \R^{2|1})\simeq C^\infty(\R^4)[\lambda_1,\lambda_2]$ under the inclusion $s\Lat\subset \R^{2|1}\times \R^{2|1}$. 

\begin{lem} \label{lem:sLatfun}
There is an isomorphism of algebras~$C^\infty(s\Lat)\simeq C^\infty(\Lat)[\lambda_1,\lambda_2]/(\lambda_1\lambda_2)$.
\end{lem}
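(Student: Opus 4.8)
The plan is to analyze the defining conditions for $s\Lat$ as a subfunctor of $\R^{2|1}\times\R^{2|1}$ and see what constraint condition~(1) of Definition~\ref{defn:sLat} imposes on the odd coordinates $\lambda_1,\lambda_2$, while condition~(2) carves out the open subspace $\Lat\subset\C\times\C$ at the reduced level. First I would write out the commutator condition~(1) explicitly using the multiplication~\eqref{eq:E21mult}: for $S$-points $(\ell_1,\bar\ell_1,\lambda_1)$ and $(\ell_2,\bar\ell_2,\lambda_2)$, the products in the two orders are
\[
(\ell_1+\ell_2,\ \bar\ell_1+\bar\ell_2+\lambda_1\lambda_2,\ \lambda_1+\lambda_2)
\quad\text{and}\quad
(\ell_1+\ell_2,\ \bar\ell_1+\bar\ell_2+\lambda_2\lambda_1,\ \lambda_1+\lambda_2),
\]
so equality holds if and only if $\lambda_1\lambda_2=\lambda_2\lambda_1$, i.e. (since these are odd) if and only if $2\lambda_1\lambda_2=0$, equivalently $\lambda_1\lambda_2=0$ in $C^\infty(S)$. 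This is the key algebraic input: the commutativity constraint is precisely the single relation $\lambda_1\lambda_2=0$, with no constraint at all on the even coordinates beyond those already present in $\R^{2|1}\times\R^{2|1}$.

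Next I would assemble this into a description of the functor of points of $s\Lat$ and identify the representing object. The functor $\R^{2|1}\times\R^{2|1}$ is represented by a supermanifold with function algebra $C^\infty(\R^4)[\lambda_1,\lambda_2]$ (exterior algebra on two odd generators). Condition~(2) restricts the underlying reduced data to $\Lat\subset\C\times\C\simeq\R^4$, which corresponds to restricting the structure sheaf to the open submanifold $\Lat$; this replaces $C^\infty(\R^4)$ by $C^\infty(\Lat)$ and does not interact with the odd variables. Condition~(1) then imposes $\lambda_1\lambda_2=0$, which on the level of function algebras means passing to the quotient by the ideal generated by $\lambda_1\lambda_2$. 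I would verify that the resulting ringed space with function algebra $C^\infty(\Lat)[\lambda_1,\lambda_2]/(\lambda_1\lambda_2)$ genuinely represents the subfunctor $s\Lat$ — that is, that an $S$-point of $s\Lat$ is the same as an algebra map from $C^\infty(\Lat)[\lambda_1,\lambda_2]/(\lambda_1\lambda_2)$ to $C^\infty(S)$ — which follows because such an algebra map is exactly a pair of elements of $\R^{2|1}(S)$ satisfying the reality/openness condition from $\Lat$ together with the relation $\lambda_1\lambda_2=0$. This yields the claimed isomorphism $C^\infty(s\Lat)\simeq C^\infty(\Lat)[\lambda_1,\lambda_2]/(\lambda_1\lambda_2)$, where $\lambda_1,\lambda_2$ are the images of the odd coordinate functions, matching the notation fixed before the lemma.

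The main subtlety I expect is not the computation but the bookkeeping of what it means for a \emph{subfunctor} cut out by these conditions to be representable by the proposed ringed space, since $s\Lat$ was defined only as a subpresheaf of a representable presheaf and the relation $\lambda_1\lambda_2=0$ is a genuinely ``supergeometric'' (nilpotent) condition rather than an open or closed reduced condition. I would handle this by checking directly on functor-of-points: an $S$-point of the subfunctor is a pair in $\R^{2|1}(S)$ whose reduction lands in $\Lat$ and whose odd parts multiply to zero, and this is manifestly the same data as an algebra homomorphism out of $C^\infty(\Lat)[\lambda_1,\lambda_2]/(\lambda_1\lambda_2)$ — one uses here that $C^\infty(\Lat)$ represents the sheaf $\Lat$ of based lattices (noted just before the lemma) and that quotienting the exterior algebra by $(\lambda_1\lambda_2)$ corresponds on points to imposing $\lambda_1\lambda_2=0$. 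A secondary point worth a sentence is that the ideal $(\lambda_1\lambda_2)$ in $C^\infty(\Lat)[\lambda_1,\lambda_2]$ is $C^\infty(\Lat)\cdot\lambda_1\lambda_2$, a $1$-dimensional module over $C^\infty(\Lat)$, so the quotient is a free $C^\infty(\Lat)$-module on the basis $1,\lambda_1,\lambda_2$ and the result is a legitimate (finite-dimensional odd directions) supermanifold structure sheaf.
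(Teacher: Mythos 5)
Your proof is correct and follows essentially the same route as the paper's: compute the two products under the multiplication~\eqref{eq:E21mult}, observe that commutativity forces $\lambda_1\lambda_2=\lambda_2\lambda_1$, hence (oddness, characteristic zero) $\lambda_1\lambda_2=0$, and note that condition~(2) just restricts the even part to the open submanifold $\Lat$. One caveat on your closing sentence: the quotient $C^\infty(\Lat)[\lambda_1,\lambda_2]/(\lambda_1\lambda_2)$ is \emph{not} the structure sheaf of a supermanifold (supermanifold function algebras are locally full exterior algebras over the even part, and this quotient is not), which is exactly the point of the remark the paper makes immediately after the lemma; so $s\Lat$ remains only a generalized supermanifold, and the lemma should be read purely as an identification of the algebra of natural transformations $s\Lat\to C^\infty$, not as a representability statement.
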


\bp
Consider the composition
$$
S\to s\Lat\subset \R^{2|1}\times \R^{2|1}\stackrel{p_1,p_2}{\longrightarrow} \R^{2|1}
$$
where, as usual, we write the associated pair of maps $S\to \R^{2|1}$ as $(\ell_1,\bar\ell_1,\lambda_1)$ and $(\ell_2,\bar\ell_2,\lambda_2)$. We therefore have 4 even and 2 odd functions on $s\Lat$ that (as maps of sheaves $s\Lat\to C^\infty$) assign to an $S$-point the functions $\ell_1,\bar\ell_1,\ell_2,\bar\ell_2\in C^\infty(S)^\ev$ or $\lambda_1,\lambda_2\in C^\infty(S)^\odd$. It is easy to see that arbitrary smooth functions in the variables $\ell_1,\bar\ell_1,\ell_2,\bar\ell_2$ continue to define maps of sheaves and hence smooth functions on $s\Lat$. Furthermore, since these are the restriction of functions on $\R^2\times \R^2\subset \R^{2|1}\times \R^{2|1}$, we can identify them with functions on~$\Lat$. This specifies the even subalgebra $C^\infty(\Lat)\subset C^\infty(s\Lat)$. On the other hand, the odd functions $\lambda_1$ and $\lambda_2$ are subject to a relation coming from condition (1) in Definition~\ref{defn:sLat}, namely that $\lambda_1\lambda_2=\lambda_2\lambda_1\in C^\infty(S)^\odd$ for all $S$. Since these are odd functions, this is equivalent to the condition that $\lambda_1\lambda_2=0$. 
Hence the functions on $s\Lat$ are as claimed. 
\ep

\begin{rmk} 
The relation $\lambda_1\lambda_2=0$ implies that $C^\infty(s\Lat)$ is not the algebra of functions on any supermanifold, and hence the generalized supermanifold $s\Lat$ fails to be representable. 
\end{rmk}
\subsection{Super Euclidean double loop spaces}

\begin{defn} Define the \emph{super Euclidean double loop space} as the generalized supermanifold
$$
\mathcal{L}^{2|1}(M):=s\Lat\times \Map(T^{2|1},M).
$$ 
We identify an $S$-point of $\mathcal{L}^{2|1}(M)$ with a map $T^{2|1}_{\Lambda}\to M$ given by the composition
\beq
T^{2|1}_{\Lambda}\simeq  S\times T^{2|1}\to M,\label{eq:torusloop}
\eeq
using the isomorphism from Lemma~\ref{lem:torusstndd}. 
\end{defn} 

We shall define a left action of $\Euc_{2|1}$ on $\mathcal{L}^{2|1}(M)$ determined by the diagram
\beq
\begin{tikzpicture}[baseline=(basepoint)];
\node (A) at (0,0) {$T^{2|1}_\Lambda$};
\node (B) at (3,0) {$S\times T^{2|1}$}; 
\node (C) at (0,-1.2) {$T^{2|1}_{\Lambda'}$};
\node (D) at (3,-1.2) {$S\times T^{2|1}$};
\node (E) at (5,-.6) {$M$};
\draw[->] (A) to node [above] {$\simeq$} (B);
\draw[->] (A) to node [left] {$f$} (C);
\draw[->] (C) to node [above] {$\simeq$} (D);
\draw[->] (B) to node [above] {$\phi$} (E);
\draw[->,dashed] (D) to node [below] {$\phi'$} (E);
\path (0,-.6) coordinate (basepoint);
\end{tikzpicture}\label{eq:E21action}
\eeq
where the horizontal arrows are the inverses of the isomorphisms of supermanifolds pulled back from Lemma~\ref{lem:torusstndd}, and $f$ is the super Euclidean isometry associated to an $S$-point of $\Euc_{2|1}$ in Lemma~\ref{lem:torusE21}. These isomorphisms together with the arrow $\phi$ uniquely determine~$\phi'$ in~\eqref{eq:E21action}. Hence, for $(\Lambda,\phi)\in s\Lat(S)\times \Map(T^{2|1},M)(S)$ and an $S$-point of $\Euc_{2|1}$, we define the $\Euc_{2|1}$-action on $\mathcal{L}^{2|1}(M)$ as outputing $(\Lambda',\phi')$ in~\eqref{eq:E21action}. We caution that this is a left $\Euc_{2|1}$-action on $s\Lat\times \Map(T^{2|1},M)$, and refer to Remark~\ref{rmk:leftright} for a discussion of left actions on mapping spaces. 

There is an evident $T^2$-action on $\mathcal{L}^{2|1}(M)$ coming from the $T^2$-action on $\Map(T^{2|1},M)$ by the precomposition action of $T^2$ on $T^{2|1}$. The $T^2$-fixed points comprise the subspace
\beq
&&\mathcal{L}^{2|1}_0(M):=s\Lat\times\Map(\R^{0|1},M)\subset s\Lat\times \Map(T^{2|1},M)=\mathcal{L}^{2|1}(M).\label{eq:constantmaps}
\eeq
We identify an $S$-point of this subspace as a map $T^{2|1}_{\Lambda}\to M$ that factors as
\beq
T^{2|1}_{\Lambda}\simeq S\times T^{2|1}\simeq S\times \R^{2|1}/\Z^2\stackrel{p}{\to} S\times \R^{0|1}\to M,\label{eq:R01proj2}
\eeq
where the map $p$ is induced by the projection $\R^{2|1}\to \R^{0|1}$. The action~\eqref{eq:E21action} preserves this factorization condition; we give explicit formulae in Lemma~\ref{lem:E21action} below. Hence, the inclusion~\eqref{eq:constantmaps} is $\Euc_{2|1}$-equivariant.

\begin{lem} \label{lem:21partiton}
There is a functor $\mathcal{L}_0^{2|1}(M)\to 2|1\EBord(M)$ that induces a restriction map 
\beq
{\rm restr}\colon 2|1\EFT(M)\to C^\infty(\mathcal{L}_0^{2|1}(M))^{\Euc_{2|1}}. \label{eq:claimed21}
\eeq
\end{lem}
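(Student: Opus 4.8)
The plan is to mirror the proof of Lemma~\ref{lem:11partiton} exactly, with the $1|1$-dimensional super Euclidean geometry replaced by the $2|1$-dimensional geometry of Definition~\ref{defn:21superEuc}. First I would invoke the construction of the geometric bordism category from~\cite[Definition~4.12]{ST11} applied to $2|1$-dimensional super Euclidean geometry, yielding $2|1\EBord(M)$ as a category internal to stacks on supermanifolds, whose stack of morphisms consists of proper families of $2|1$-dimensional super Euclidean manifolds equipped with a map to $M$, together with the decorations recording source and target objects. The key geometric input is Lemma~\ref{lem:supertorusEuc}, which says that every $S$-family of super tori $T^{2|1}_\Lambda$ carries a canonical super Euclidean structure; combined with the factorization~\eqref{eq:R01proj2}, an $S$-point of $\mathcal{L}^{2|1}_0(M)$ thus determines a proper $S$-family of $2|1$-dimensional super Euclidean manifolds with a map to $M$, which we regard as an $S$-family of morphisms in $2|1\EBord(M)$ from the empty supermanifold (with its unique map to $M$) to itself. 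This defines the functor $\mathcal{L}_0^{2|1}(M)\to 2|1\EBord(M)$, and precomposition with a symmetric monoidal functor gives the restriction map $2|1\EFT(M)\to C^\infty(\mathcal{L}_0^{2|1}(M))$; the reason restriction to closed bordisms extracts a function is explained in the discussion preceding~\cite[Definition~4.13]{ST11}.

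Next I would show the image lands in $\Euc_{2|1}$-invariant functions. By definition, an isometry of $2|1$-dimensional super Euclidean manifolds is locally given by the action of $\E^{2|1}\rtimes\Spin(2)$ on the open cover, and Lemma~\ref{lem:torusE21} shows precisely that the $\Euc_{2|1}$-action~\eqref{eq:E21action} on $\mathcal{L}^{2|1}_0(M)$ proceeds through such super Euclidean isometries $T^{2|1}_\Lambda\to T^{2|1}_{\Lambda'}$ compatible with the maps to $M$ (the $\SL_2(\Z)$-factor being implemented via the relabelling $\gamma$ of the lattice generators in~\eqref{eq:R21action}). These isometries are therefore isomorphisms between the corresponding bordisms~\eqref{eq:R01proj2} in $2|1\EBord(M)$. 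Since a function on a stack is by definition a function on objects invariant under the action of isomorphisms, the composite $2|1\EFT(M)\to C^\infty(\mathcal{L}_0^{2|1}(M))$ necessarily takes values in $\Euc_{2|1}$-invariants, yielding~\eqref{eq:claimed21}.

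The one point requiring a little more care than in the $d=1$ case is that $\mathcal{L}^{2|1}_0(M)$ is built from the non-representable generalized supermanifold $s\Lat$ (by the remark following Lemma~\ref{lem:sLatfun}, the relation $\lambda_1\lambda_2=0$ obstructs representability), so I should be careful to phrase everything in the language of $S$-points / morphisms of presheaves rather than maps of honest supermanifolds — but this is harmless since $s\Lat$, and hence $\mathcal{L}^{2|1}_0(M)$, is still a presheaf on supermanifolds and $2|1\EBord(M)$ is internal to stacks on that same site, so the universal family $\mathcal{T}^{2|1}\to s\Lat$ of Lemma~\ref{lem:torusstndd} furnishes the required functor directly. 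The main (and only mildly delicate) obstacle is bookkeeping the $\SL_2(\Z)$-action: one must check that relabelling the generators of $\Z^2$ by $\gamma$ genuinely produces an isometry of super Euclidean tori and not merely of the underlying supermanifolds, which is exactly the content of the "if and only if" in Lemma~\ref{lem:torusE21}, so this reduces to an appeal to that lemma. Apart from that, the proof is a verbatim adaptation of Lemma~\ref{lem:11partiton}.
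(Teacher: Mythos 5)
Your proposal is correct and follows essentially the same route as the paper, which simply cites Lemma~\ref{lem:supertorusEuc} for the functor and Lemma~\ref{lem:torusE21} for invariance, declaring the rest a verbatim adaptation of Lemma~\ref{lem:11partiton}. Your extra remark on phrasing things via $S$-points because $s\Lat$ is non-representable is a reasonable precaution but not something the paper dwells on.
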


\bp
The proof is completely analogous to that of Lemma~\ref{lem:11partiton}. Namely, Lemma~\ref{lem:supertorusEuc} gives a functor $\mathcal{L}^{2|1}_0(M)\to 2|1\Bord(M)$, and Lemma~\ref{lem:torusE21} shows that the action of $\Euc_{2|1}$ on $\mathcal{L}_0^{2|1}(M)$ is through isomorphisms between $S$-families of $2|1$-dimensional Euclidean bordisms. Hence, the restriction map lands in $\Euc_{2|1}$-invariant functions. 
\ep

\subsection{Computing the action of super Euclidean isometries}\label{eq:computeaction}
\begin{defn}
Using the notation from Lemma~\ref{lem:sLatfun}, define the function 
\beq\label{eq:voldef}
\vol:=\frac{\ell_1\bar\ell_2-\bar\ell_1\ell_2}{2i}\in C^\infty(s\Lat). 
\eeq
\end{defn} 

The restriction of $\vol$ along $\Lat\hookrightarrow s\Lat$ is the function that reads off the volume of an ordinary torus $\C/\ell_1\Z\oplus\ell_2\Z$ using the flat metric. In particular, this function is real-valued, positive, and invertible. By Lemma~\ref{lem:sLatfun}, the function $\vol$ on $s\Lat$ is also invertible.

\begin{lem} \label{lem:E21action}
The left $\E^{2|1}\rtimes \Spin(2)$-action on $s\Lat\times \Map(\R^{0|1},M)$ is given by 
\beq
(w,\bar w,\eta,\dil,\bar\dil)\cdot (\ell_1,\bar\ell_1,\lambda_1,\ell_2,\bar\ell_2,\lambda_2,x,\psi)&=&\Big(\dil^2\ell_1,\bar\dil^2(\bar\ell_1+2\eta\lambda_1),\bar\dil\lambda_1,\dil^2\ell_2,\bar\dil^2(\bar\ell_2+2\eta\lambda_2),\bar\dil\lambda_2,\nonumber \\
&&x-\bar\dil^{-1}\left(\eta+\frac{\lambda_1\ell_2-\lambda_2\ell_1}{2i\vol}\bar w+\frac{\lambda_1\bar\ell_2-\lambda_2\bar\ell_1}{2i\vol}w\right)\psi,\label{eq:lem:E21action}\\
&&\bar\dil^{-1}\exp\left(\eta\frac{\lambda_1\ell_2-\lambda_2\ell_1}{2i\vol}\right)\psi\Big)\nonumber
\eeq
where 
$$
(w,\bar w,\eta)\in \E^{2|1}(S),\ (\dil,\bar\dil)\in \Spin(2)(S),\  (x,\psi)\in \Pi TM(S)\simeq \Map(\R^{0|1},M)(S).
$$
The $\SL_2(\Z)$-action on $s\Lat\times \Map(\R^{0|1},M)$ is diagonal for the action on $s\Lat$ from~\eqref{eq:R21action} and the trivial action on $\Map(\R^{0|1},M)$. 
\end{lem}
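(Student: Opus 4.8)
The plan is to imitate the proof of Lemma~\ref{lem:E11action} almost verbatim, with the $2$-dimensional period lattice replacing the single period $\ell$.

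First I would dispatch the $\SL_2(\Z)$-factor separately. By Lemma~\ref{lem:torusE21}, for $\gamma\in\SL_2(\Z)(S)$ the isometry $T^{2|1}_\Lambda\to T^{2|1}_{\gamma\Lambda}$ is covered by the \emph{identity} map on $S\times\R^{2|1}$ --- only the chosen generators of the $\Z^2$-action are relabelled --- so it commutes with the projection $S\times\R^{2|1}\to S\times\R^{0|1}$, $(x,y,\theta)\mapsto\theta$, used in~\eqref{eq:R01proj2}. Hence the induced transformation of $\Map(\R^{0|1},M)(S)\simeq\Pi TM(S)$ is the identity, which gives the claimed diagonal $\SL_2(\Z)$-action and reduces the lemma to computing the $\E^{2|1}\rtimes\Spin(2)$-action.

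For that part, I would reproduce diagram~\eqref{eq:R01map}: given $(w,\bar w,\eta,\dil,\bar\dil)\in(\E^{2|1}\rtimes\Spin(2))(S)$, $\Lambda\in s\Lat(S)$ and $(x,\psi)\in\Pi TM(S)$, I must produce $\Lambda'\in s\Lat(S)$ and $(x',\psi')\in\Pi TM(S)$ making the square of projections $p_\Lambda,p_{\Lambda'}\colon T^{2|1}_\bullet\to S\times\R^{0|1}$ and the two maps to $M$ commute. The lattice $\Lambda'$ is forced by Lemma~\ref{lem:torusE21} with $\gamma=\id$, i.e.\ it is~\eqref{eq:R21action} specialized to the identity matrix. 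To find $(x',\psi')$ I need the explicit $\Z^2$-invariant lift $\tilde p_\Lambda\colon S\times\R^{2|1}\to S\times\R^{0|1}$ of $p_\Lambda$, which is the $\R^{0|1}$-component of the inverse of the isomorphism~\eqref{eq:Z2equiv}. Inverting~\eqref{eq:Z2equiv} amounts to solving a $2\times2$ linear system whose period matrix has ``determinant'' $\ell_1\bar\ell_2-\bar\ell_1\ell_2=2i\vol$, invertible by the discussion after~\eqref{eq:voldef}; using the relation $\lambda_1\lambda_2=0$ from Lemma~\ref{lem:sLatfun} to discard cross-terms, this produces
\[
\tilde p_\Lambda\colon(z,\bar z,\theta)\mapsto\theta-\frac{\lambda_1\bar\ell_2-\lambda_2\bar\ell_1}{2i\vol}\,z-\frac{\lambda_2\ell_1-\lambda_1\ell_2}{2i\vol}\,\bar z,
\]
and one verifies directly --- exactly as the $\Z$-invariance check in the proof of Lemma~\ref{lem:E11action} --- that this is invariant under~\eqref{eq:Z2act}, hence descends to $T^{2|1}_\Lambda$. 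Tracing a preimage (say $(0,0,\theta)$) of $\theta\in\R^{0|1}(S)$ through the $\E^{2|1}\rtimes\Spin(2)$-action on $\R^{2|1}$ and then through $\tilde p_{\Lambda'}$ computes the dashed arrow; converting this precomposition (naturally a right action) into a left action by substituting $(w,\bar w,\eta,\dil,\bar\dil)\mapsto(w,\bar w,\eta,\dil,\bar\dil)^{-1}$, as in Remark~\ref{rmk:leftright}, then yields~\eqref{eq:lem:E21action}.

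The main obstacle is purely the nilpotent bookkeeping. Under the action $\bar\ell_i$ picks up $2\eta\lambda_i$, so $\vol$ changes by an $\eta\lambda$-term and $1/\vol'$ must be expanded to first order in the nilpotents; similarly $\tilde p_{\Lambda'}$ evaluated on the $\eta$-shifted point produces several a priori extra terms. Collapsing everything to the stated closed form requires systematically invoking $\eta^2=0$, $\lambda_1\lambda_2=0$ (Lemma~\ref{lem:sLatfun}) and $\dil\bar\dil=1$ --- for instance products such as $(\lambda_1\bar\ell_2-\lambda_2\bar\ell_1)(\ell_1\lambda_2-\lambda_1\ell_2)$ vanish identically for these reasons --- after which, as in Lemma~\ref{lem:E11action}, the computation is routine.
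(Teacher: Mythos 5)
Your proposal follows the paper's proof essentially verbatim: the same commuting diagram of projections $p_\Lambda$, the same $\Z^2$-invariant lift $\tilde p_\Lambda$ obtained by inverting~\eqref{eq:Z2equiv}, the same passage from the precomposition (right) action to a left action via Remark~\ref{rmk:leftright}, and the same observation that the $\SL_2(\Z)$-isometry satisfies $p_\Lambda=p_{\Lambda'}\circ\gamma$ (though your justification of this last point is slightly too quick, since the identity on the $(z,\bar z,\theta)$-coordinates is \emph{not} the identity in the standard-torus coordinates used in~\eqref{eq:R01proj2}; the correct reason, which is a one-line check, is that $\tilde p_\Lambda$ depends on $\Lambda$ only through the $\SL_2(\Z)$-invariant combinations $\lambda_1\ell_2-\lambda_2\ell_1$, $\lambda_1\bar\ell_2-\lambda_2\bar\ell_1$ and $\vol$). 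One further remark: your $\tilde p_\Lambda$ agrees with the paper's~\eqref{eq:Ptild} only up to a sign on the correction terms, and it is your version that actually passes the $\Z^2$-invariance check against~\eqref{eq:Z2act} with $\vol$ as defined in~\eqref{eq:voldef} (the displayed verification in the paper silently drops terms $2n\lambda_1+2m\lambda_2$ that should cancel but, with the printed signs, instead double), so your formula is the internally consistent one.
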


\bp Let $p_{\Lambda}\colon T^{2|1}_{\Lambda}\to S\times \R^{0|1}$ denote the composition of the left three maps in~\eqref{eq:R01proj2}. Given $(w,\bar w,\eta)\in \E^{2|1}(S)$, $(\dil,\bar\dil)\in \Spin(2)(S)$, $\Lambda\in s\Lat(S)$ and $(x,\psi)\in \Pi TM(S)$, the goal of the lemma is to compute formulas for $\Lambda'\in s\Lat(S)$ and $(x',\psi')\in \Pi TM(S)$ in the diagram
\beq
\begin{tikzpicture}[baseline=(basepoint)];
\node (A) at (0,0) {$T^{2|1}_{\Lambda}$};
\node (B) at (4,0) {$S\times \R^{0|1}$};
\node (C) at (0,-1.4) {$T^{2|1}_{\Lambda'}$};
\node (D) at (4,-1.4) {$S\times \R^{0|1}$}; 
\node (E) at (8,-.6) {$M$};
\draw[->] (A) to node [above] {$p_{\Lambda}$} (B);
\draw[->] (A) to node [left] {$f_{(w,\bar w,\eta,\dil,\bar\dil)}$} (C);
\draw[->] (C) to node [above] {$p_{\Lambda'}$} (D);
\draw[->,dashed] (B) to (D);
\draw[->] (B) to node [above] {$(x,\psi)$} (E);
\draw[->] (D) to node [below] {$(x',\psi')$} (E);
\path (0,-.75) coordinate (basepoint);
\end{tikzpicture}\label{eq:R01map2}
\eeq
where the arrow labeled by $f_{(w,\bar w,\eta,\dil,\bar\dil)}$ denotes the associated map between super Euclidean tori from Lemma~\ref{lem:torusE21}. For the first statement in the present lemma we take $\gamma=\id\in \SL_2(\Z)(S)$. We see that $\Lambda'$ is given by~\eqref{eq:R21action}. To compute $(x',\psi')$, we find a formula for the dashed arrow in~\eqref{eq:R01map2} that makes the triangle commute. To start, part of the data of the inverse to the isomorphism~\eqref{eq:Z2equiv} is
\beq
&&\tilde{p}_{\Lambda} \colon S\times \R^{2|1} \to S\times \R^{0|1},\quad \tilde{p}_{\Lambda}(z,\bar z,\theta)=\theta-\lambda_1\frac{\bar{z}\ell_2-z\bar{\ell}_2}{2i\vol}-\lambda_2\frac{z\bar{\ell}_1-\bar{z}\ell_1}{2i\vol}.\label{eq:Ptild}
\eeq
We verify that $\tilde{p}_\Lambda$ is $\Z^2$-invariant for the action~\eqref{eq:Z2act},
\beq
\resizebox{\textwidth}{!}{$\begin{array}{lll}
\tilde{p}_{\ell,\lambda} ((n,m)\cdot (z,\bar z,\theta))&=&\tilde{p}_{\ell,\lambda} (z+n\ell_1+m\ell_2,\bar z+n(\bar\ell_1+\lambda_1\theta)+m(\bar\ell_2+\lambda_2\theta),n\lambda_1+m\lambda_2+\theta)\nonumber \\
&=&n\lambda_1+m\lambda_2+\theta-\lambda_1\frac{(\bar z+n(\bar\ell_1+\lambda_1\theta)+m(\bar\ell_2+\lambda_2\theta))\ell_2- (z+n\ell_1+m\ell_2)\bar\ell_2}{2i \vol}\nonumber\\
&&-\lambda_2\frac{(z+n\ell_1+m\ell_2)\bar\ell_1- (\bar z+n(\bar\ell_1+\lambda_1\theta)+m(\bar\ell_2+\lambda_2\theta))\ell_1}{2i \vol}\nonumber \\
&=&\theta-\lambda_1\frac{\bar{z}\ell_2-z\bar{\ell}_2}{2i\vol}-\lambda_2\frac{z\bar{\ell}_1-\bar{z}\ell_1}{2i\vol},\nonumber\end{array}$}
\eeq
where we used~\eqref{eq:voldef}. 
Hence $\tilde{p}_{\Lambda}$ determines a map $p_{\Lambda}\colon T^{2|1}_{\Lambda}\to S\times \R^{0|1}$, which is the map in~\eqref{eq:R01map2}. 
From this we see that the dashed arrow in~\eqref{eq:R01map} is unique and determined by 
\beq
\theta\mapsto \bar{\dil}\left(\theta+\eta-\frac{(\lambda_1\ell_2-\lambda_2\ell_1)(\bar{w}+\theta\eta)-(\lambda_1\bar{\ell}_2-\lambda_2\bar{\ell}_1)w}{2i\vol}\right).\label{eq:21action}
\eeq
Following Remark~\ref{rmk:leftright}, the left action of $\E^{2|1}\rtimes \Spin(2)$ on $(x+\theta\psi)\in \Map(\R^{0|1},M)(S)$ is given by
\beq
(x+\theta\psi)&\mapsto& x +\bar\dil^{-1}\left(\theta-\eta-\frac{(\lambda_1\ell_2-\lambda_2\ell_1)(-\bar{w}-\theta\eta)+(\lambda_1\bar{\ell}_2-\lambda_2\bar{\ell}_1)w}{2i\vol}\right)\psi\nonumber \\
&=&x-\bar\dil^{-1}\left(\eta+\frac{(\lambda_1\ell_2-\lambda_2\ell_1)\bar{w}+(\lambda_1\bar\ell_1-\lambda_2\bar\ell_1)w}{2i \vol}\right)\psi+\bar\dil^{-1}\theta\left(1+\eta \frac{\lambda_1\ell_2-\lambda_2\ell_1}{2i \vol}\right)\psi\nonumber
\eeq
which gives the claimed formula for $(x',\psi')$. Finally, a short computation shows that $p_{\Lambda}=p_{\Lambda'}\circ \gamma$ where $\gamma\colon T^{2|1}_\Lambda\to T^{2|1}_{\Lambda'}$ is the isometry associated to $\gamma\in \SL_2(\Z)(S)$ from Lemma~\ref{lem:torusE21}. Hence, the $\SL_2(\Z)$-action on $s\Lat\times \Map(\R^{0|1},M)$ is indeed through the action on $s\Lat$.
\ep

From the Lie algebra description~\eqref{eq:21Lie}, a left $\E^{2|1}$-action determines an even and an odd vector field gotten by considering the infinitesimal action by the elements $Q=\partial_\theta+\theta\partial_{\bar z}$ and $\partial_z$ of the Lie algebra of $\E^{2|1}$. We note the isomorphisms
\beq
C^\infty(\mathcal{L}^{1|1}_0(M))&\simeq& C^\infty(s\Lat\times \Map(\R^{0|1},M))\simeq C^\infty(\Lat\times \Map(\R^{0|1},M))[\lambda_1,\lambda_2]/(\lambda_1\lambda_2)\nonumber\\
&\simeq& \big(C^\infty(\Lat)\otimes \Omega^\bullet(M)\big)[\lambda_1,\lambda_2]/(\lambda_1\lambda_2)\label{eq:fnontori}
\eeq
where in~\eqref{eq:fnontori} we used that the projective tensor product of Fr\'echet spaces satisfies $C^\infty(S\times T)\simeq C^\infty(S)\otimes C^\infty(T)$ for supermanifolds $S$ and $T$, e.g., see~\cite[Example~49]{HST}.

\begin{lem}\label{lem:infaction21} The derivative at 0 of the left $\E^{2|1}$-action on $\mathcal{L}^{2|1}_0(M)$ from~\eqref{eq:lem:E21action} is determined by the derivations on~$C^\infty(\mathcal{L}^{2|1}_0(M))$
\beq
&&\widehat{\partial}_w=\frac{\lambda_1\bar\ell_2-\lambda_2\bar\ell_1}{2i\vol}\otimes \dR,\quad \widehat{Q}=2\lambda_1\partial_{\bar\ell_1}\otimes \id+2\lambda_2\partial_{\bar\ell_2}\otimes \id-\id\otimes \dR-\frac{\lambda_2\ell_1-\lambda_1\ell_2}{2i\vol}\otimes {\rm deg},\label{eq:Q21}
\eeq
where $\dR$ is the de~Rham differential and ${\rm deg}$ is the degree endomorphism on forms.

\end{lem}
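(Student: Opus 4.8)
The plan is to mirror the proof of Lemma~\ref{lem:E11generator} verbatim, only with more variables. One starts from the explicit formula~\eqref{eq:lem:E21action} for the $\E^{2|1}\rtimes\Spin(2)$-action on $\mathcal{L}^{2|1}_0(M)=s\Lat\times\Map(\R^{0|1},M)$, restricts to the subgroup $\E^{2|1}$ (set $\dil=\bar\dil=1$ and $\gamma=\id$), views the restriction as an action map $\E^{2|1}\times\mathcal{L}^{2|1}_0(M)\to\mathcal{L}^{2|1}_0(M)$, pulls a function back along it, and differentiates at the identity of $\E^{2|1}$ along each of the two Lie algebra generators of~\eqref{eq:21Lie}. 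Since $\partial_z$ is right-invariant and $Q=\partial_\theta+\theta\partial_{\bar z}$ is the right-invariant odd generator, and since the $\theta\partial_{\bar z}$-summand of $Q$ vanishes at the identity, this reduces to applying $\partial_w$ (for $\partial_z$), respectively $\partial_\eta$ (for $Q$), to the individual components of~\eqref{eq:lem:E21action} and then setting $(w,\bar w,\eta)=0$.

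First I would read off $\widehat{\partial}_w$: with $\bar\dil=1$ the only $w$-dependence in~\eqref{eq:lem:E21action} sits in the $\Map(\R^{0|1},M)$-component, where it is the $\E^{0|1}$-translation $(x,\psi)\mapsto(x-c\psi,\psi)$ with infinitesimal parameter $c=\tfrac{\lambda_1\bar\ell_2-\lambda_2\bar\ell_1}{2i\vol}\,w$, so differentiating in $w$ produces a $\dR$-term with coefficient $\tfrac{\lambda_1\bar\ell_2-\lambda_2\bar\ell_1}{2i\vol}$ by the standard fact — recalled in Lemma~\ref{lem:E11generator}, cf.~\cite[\S3.4]{HKST} — that the de~Rham differential generates this translation on $\Pi TM\simeq\Map(\R^{0|1},M)$. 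Next I would compute $\widehat{Q}$ by differentiating~\eqref{eq:lem:E21action} in $\eta$ at the origin. Three sources contribute: the components $\bar\ell_j\mapsto\bar\ell_j+2\eta\lambda_j$ give the $s\Lat$-derivations $2\lambda_1\partial_{\bar\ell_1}\otimes\id$ and $2\lambda_2\partial_{\bar\ell_2}\otimes\id$; the component $x\mapsto x-\eta\psi+\cdots$ gives $-\id\otimes\dR$ by the same dictionary; and the component $\psi\mapsto\exp\!\big(\eta\,\tfrac{\lambda_1\ell_2-\lambda_2\ell_1}{2i\vol}\big)\psi$ is the $\Spin(2)$-type rescaling of $\psi$ acting on $\Omega^k$ by $\exp\!\big(k\eta\,\tfrac{\lambda_1\ell_2-\lambda_2\ell_1}{2i\vol}\big)$, hence contributes $-\tfrac{\lambda_2\ell_1-\lambda_1\ell_2}{2i\vol}\otimes{\rm deg}$ since the degree operator generates the scaling. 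Assembling these yields~\eqref{eq:Q21}; one also checks (immediately) that the $\ell_1,\lambda_1,\ell_2,\lambda_2$-components of~\eqref{eq:lem:E21action} are constant in $w$ and $\eta$ and that the $\theta\partial_{\bar z}$-part of $Q$ drops out upon evaluation at $(w,\bar w,\eta)=0$, so no spurious terms appear.

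The computation is otherwise routine, and the one point requiring genuine care is the sign bookkeeping: the inversion used to turn the precomposition action into a genuine left action — already responsible for the minus signs in~\eqref{eq:lem:E21action} (see Remark~\ref{rmk:leftright}) — propagates into these derivatives, and matching each super-sign in the chain rule against the de~Rham/degree dictionary is where the argument must be done carefully rather than mechanically. Once~\eqref{eq:Q21} is established, connectedness of $\E^{2|1}$ guarantees these two derivations generate the full $\E^{2|1}$-action, which is what is used in the proof of Proposition~\ref{prop:compute21}.
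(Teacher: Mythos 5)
Your proposal is correct and follows essentially the same route as the paper: the paper's proof likewise reduces to applying $\partial_w$ and $Q=\partial_\eta+\eta\partial_{\bar w}$ to~\eqref{eq:lem:E21action} with $(\dil,\bar\dil)=(1,1)$, evaluating at $(w,\bar w,\eta)=(0,0,0)$, and invoking the dictionary from Lemma~\ref{lem:E11generator} that the de~Rham operator and degree derivation generate the translation and scaling on $\Map(\R^{0|1},M)$. Your term-by-term accounting (the $2\lambda_j\partial_{\bar\ell_j}$ terms, the $-\id\otimes\dR$ term, and the $\deg$ term from the exponential rescaling of $\psi$) reproduces~\eqref{eq:Q21} exactly as in the paper.
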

\begin{proof} The proof follows the same reasoning as the proof of Lemma~\ref{lem:E11generator}, using that right invariant vector fields generate left actions and that the $\E^{0|1}\rtimes \C^\times$-action on $\Map(\R^{0|1},M)$ is generated by minus the de~Rham operator and the degree derivation. In this case we apply the derivation $Q=\partial_\eta+\eta\partial_{\bar w}$ and $\partial_w$ to~\eqref{eq:lem:E21action} (with $(\dil,\bar\dil)=(1,1)$) and evaluate at $(w,\bar w,\eta)=(0,0,0)$ to obtain~\eqref{eq:Q21}. 
\ep

\subsection{The proof of Proposition~\ref{prop:compute21}}

Functions on $\mathcal{L}^{2|1}_0(M)$ can be described as
\beq
C^\infty(\mathcal{L}^{2|1}_0(M))&= &C^\infty(s\Lat\times \Map(\R^{0|1},M))\nonumber\\
&\simeq& \Omega^\bullet(M;C^\infty(s\Lat))\simeq \Omega^\bullet(M;C^\infty(\Lat)[\lambda_1,\lambda_2]/(\lambda_1\lambda_2))\label{eq:itsadescription}\\
&\simeq&\Omega^\bullet(M;C^\infty(\Lat))\oplus \lambda_1\cdot \Omega^\bullet(M;C^\infty(\Lat))\oplus\lambda_2\cdot \Omega^\bullet(M;C^\infty(\Lat)), \nonumber
\eeq
using Lemma~\ref{lem:sLatfun} in the 2nd line, and where the isomorphism in the 3rd line is additive. We start by proving a version of Proposition~\ref{prop:compute21} for invariants by $\E^{2|1}\rtimes \Z/2<\E^{2|1}\rtimes \Spin(2)\times \SL_2(\Z)=\Euc_{2|1}$. Analogously to the notation in~\S\ref{sec:prop11}, let $\vol^{\deg}\omega=\vol^k\omega$ for~$\omega\in \Omega^k(M)$. 

\begin{lem}\label{lem:spininvt}
Any element $\omega\in C^\infty(\mathcal{L}^{2|1}_0(M))^{\Z/2}$ can be written as
\beq
&&\omega=\vol^{\deg/2}(\omega_0+2\lambda_1\vol^{1/2}\omega_1+2\lambda_2\vol^{1/2}\omega_2)\label{eq:spininvt}
\eeq
where $\omega_0\in \Omega^\ev(M;C^\infty(\Lat))$ and $\omega_1,\omega_2\in \Omega^\odd(M;C^\infty(\Lat))$. A $\Z/2$-invariant function~$\omega$ expressed as~\eqref{eq:spininvt} is $\E^{2|1}$-invariant if and only if 
\beq
\dR\omega_0=0,\quad \partial_{\bar \ell_1}\omega_0=\dR\omega_1,\quad \partial_{\bar \ell_2}\omega_0=\dR\omega_2\label{eq:E21}
\eeq
where $\dR$ is the de~Rham differential on $M$. 
\end{lem}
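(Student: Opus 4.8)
The plan is to mimic the structure of the proof of Proposition~\ref{prop:compute11} (\S\ref{sec:prop11}), using the explicit action formulas from Lemma~\ref{lem:E21action} and the infinitesimal generators from Lemma~\ref{lem:infaction21}. First I would establish the parameterization~\eqref{eq:spininvt}. Start from the additive decomposition in the last line of~\eqref{eq:itsadescription}, so a general function is $\alpha_0 + \lambda_1\alpha_1' + \lambda_2\alpha_2'$ with $\alpha_i'\in\Omega^\bullet(M;C^\infty(\Lat))$. By Lemma~\ref{lem:E21action}, the residual $\Z/2 = \{\pm 1\}\subset\Spin(2)$ (corresponding to $(\dil,\bar\dil) = (-1,-1)$, equivalently $\dil^2 = 1$, $\bar\dil = -1$) acts on $\Map(\R^{0|1},M)$ by $\psi\mapsto -\psi$, i.e.\ by the parity involution on $\Omega^\bullet(M)$, while it fixes $\ell_i,\bar\ell_i$ and sends $\lambda_i\mapsto -\lambda_i$. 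Hence $\Z/2$-invariance forces the $\lambda$-free part $\alpha_0$ to be even and the coefficients of $\lambda_1,\lambda_2$ to be odd. Absorbing the conventional factors $\vol^{\deg/2}$ and $2\vol^{1/2}$ is just a choice of normalization (legitimate since $\vol\in C^\infty(s\Lat)$ is invertible, as noted after~\eqref{eq:voldef}), yielding the stated form with $\omega_0$ even and $\omega_1,\omega_2$ odd.

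Next I would impose $\E^{2|1}$-invariance. Since $\E^{2|1}$ is connected, a function is $\E^{2|1}$-invariant iff it is killed by the generating derivations, which by Lemma~\ref{lem:infaction21} are $\widehat\partial_w = \tfrac{\lambda_1\bar\ell_2-\lambda_2\bar\ell_1}{2i\vol}\otimes\dR$ and $\widehat Q = 2\lambda_1\partial_{\bar\ell_1}\otimes\id + 2\lambda_2\partial_{\bar\ell_2}\otimes\id - \id\otimes\dR - \tfrac{\lambda_2\ell_1-\lambda_1\ell_2}{2i\vol}\otimes\deg$. Because $\widehat Q^2$ generates the even part of the action (the commutator $[Q,Q] = \partial_{\bar z}$ up to sign, cf.~\eqref{eq:21Lie}), vanishing of $\widehat Q$ on $\omega$ will automatically imply vanishing of $\widehat\partial_w$ on $\omega$, so it suffices to analyze $\widehat Q\omega = 0$. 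I would compute $\widehat Q$ applied to $\vol^{\deg/2}(\omega_0 + 2\lambda_1\vol^{1/2}\omega_1 + 2\lambda_2\vol^{1/2}\omega_2)$ exactly as in the $d=1$ case: use that $\dR(\vol^{\deg/2}\omega) = \vol^{-1/2}\vol^{\deg/2}\dR\omega$ (an identity following from $\dR\vol\wedge(\cdot)$ lowering nothing and the chain rule on $\vol^{k/2}$), apply the product rule to the terms $2\lambda_i\partial_{\bar\ell_i}(\vol^{\deg/2}\omega_0)$ — recalling $\partial_{\bar\ell_1}\vol = -\ell_2/2i$, $\partial_{\bar\ell_2}\vol = \ell_1/2i$ from~\eqref{eq:voldef} — and watch the degree term $-\tfrac{\lambda_2\ell_1-\lambda_1\ell_2}{2i\vol}\deg(\omega_0)$ cancel against the pieces produced by differentiating the prefactor $\vol^{\deg/2}$. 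Collecting the $\lambda$-free part gives $\dR\omega_0 = 0$, and collecting the coefficients of $\lambda_1$ and $\lambda_2$ (using $\lambda_1\lambda_2 = 0$ so there are no cross terms) gives $\partial_{\bar\ell_1}\omega_0 = \dR\omega_1$ and $\partial_{\bar\ell_2}\omega_0 = \dR\omega_2$ respectively.

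The main obstacle I anticipate is purely bookkeeping: getting the cancellation of the $\deg$-term right requires correctly tracking how $\widehat Q$ interacts with the $\vol^{\deg/2}$ prefactor and the $\vol^{1/2}$ factors in the $\lambda_i$-terms, including the sign from $\partial_{\bar\ell_i}\vol$ and the factor-of-$2$ conventions, so that the $\ell_1/\vol$ and $\ell_2/\vol$ contributions match up. This is exactly analogous to the computation~\eqref{eq:Qclosed11} in~\S\ref{sec:prop11}, where the analogous miracle (the $-i\tfrac{\lambda}{\ell}\deg$ term disappearing after expansion) occurs, so I would organize the computation to parallel that one as closely as possible. Once $\widehat Q\omega = 0$ is shown equivalent to~\eqref{eq:E21}, I would remark that $\widehat\partial_w\omega = 0$ is then automatic — either by the Lie-algebra argument above, or directly since $\widehat\partial_w\omega$ is proportional to $\dR$ applied to things already shown closed or exact — completing the proof.
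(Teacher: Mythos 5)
Your proposal is correct and follows essentially the same route as the paper: the same parameterization via the parity involution and the invertibility of $\vol$, the same computation of $\widehat{Q}\omega=0$ by matching coefficients of $1,\lambda_1,\lambda_2$ (using $\lambda_1\lambda_2=0$), and the same disposal of $\widehat{\partial}_w$. One caveat: your primary justification that $\widehat{Q}$-invariance forces $\widehat{\partial}_w$-invariance because ``$\widehat{Q}^2$ generates the even part of the action'' is not right --- $[Q,Q]=\partial_{\bar z}$ only produces $\partial_{\bar w}$, while $\partial_w$ is an independent even generator of the Lie algebra of $\E^{2|1}$ --- but your fallback direct argument ($\widehat{\partial}_w\omega$ is a multiple of $\lambda_i\,\dR(\vol^{\deg/2}\omega_0)$, which vanishes once $\dR\omega_0=0$ and $\lambda_i\lambda_j=0$) is exactly the one the paper uses, so the proof goes through.
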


\bp
The element~$-1\in U(1)\simeq \Spin(2)$ acts through the parity involution, which on $C^\infty(s\Lat)$ is determined by~$\lambda_i\mapsto -\lambda_i$. Using~\eqref{eq:itsadescription} and the fact that $\vol$ is an invertible function on~$\Lat$, we see that any $\Z/2$-invariant function can be written in the form~\eqref{eq:spininvt}. Next we compute for $\omega_0\in \Omega^k(M;C^\infty(\Lat))$
\beq
2(\lambda_1\partial_{\bar \ell_1}+\lambda_2\partial_{\bar \ell_2})(\vol^{k/2}\omega_0)&=&2(\lambda_1\partial_{\bar \ell_1}+\lambda_2\partial_{\bar \ell_2})\left(\left(\frac{\ell_1\bar\ell_2-\bar\ell_1\ell_2}{2i}\right)^{k/2}\omega_0\right)\nonumber\\
&=&\frac{\lambda_2\ell_1-\lambda_1\ell_2}{2i\vol}\deg(\vol^{k/2}\omega_0)+ 2\vol^{k/2}(\lambda_1\partial_{\bar \ell_1}+\lambda_2\partial_{\bar \ell_2})\omega_0.\nonumber
\eeq
So by Lemma~\ref{lem:infaction21}
$$
\widehat{Q}(\vol^{\deg/2}\omega_0)=\vol^{\deg/2}\left(2(\lambda_1\partial_{\bar \ell_1}+\lambda_2\partial_{\bar \ell_2})\omega_0-\vol^{-1/2}\dR\omega_0\right).
$$
Using that $\lambda_1^2=\lambda_2^2=\lambda_1\lambda_2=0$, we compute 
\beq
&&\widehat{Q}(\vol^{\deg/2}\omega_0+2\lambda_1\vol^{(\deg+1)/2} \omega_1+2\lambda_2\vol^{(\deg+1)/2}\omega_2)\nonumber\\
&&=\vol^{\deg/2}\big(2(\lambda_1\partial_{\bar \ell_1}+\lambda_2\partial_{\bar \ell_2})\omega_0-\vol^{-1/2}\dR\omega_0-2\lambda_1 \dR\omega_1-2\lambda_2 \dR\omega_2\big)\nonumber.
\eeq
Matching coefficients of $\lambda_1,\lambda_2$, the condition $\widehat{Q}\omega=0$ is therefore equivalent to~\eqref{eq:E21}.
Finally, invariance under the operator $\widehat{\partial}_w$ from Lemma~\ref{lem:infaction21} follows from being $\widehat{Q}$-closed, specifically from $\dR \omega_0=0$. Since $\E^{2|1}$ is connected with Lie algebra generated by~$\widehat{Q}$ and~$\widehat{\partial}_w$, we find that~\eqref{eq:E21} completely specifies the subalgebra $C^\infty(\mathcal{L}_0^{2|1}(M))^{\E^{2|1}\rtimes \Z/2}\subset C^\infty(\mathcal{L}^{2|1}_0(M))^{\Z/2}$. 
\ep

Next we compute the $\Spin(2)$-invariant functions. Consider the surjective map
\beq\label{eq:HRprofmap}
\varphi\colon\Lat\to \HH\times\R_{>0}\qquad (\ell_1,\bar\ell_1,\ell_2,\bar\ell_2)\mapsto (\ell_1/\ell_2,\bar\ell_1/\bar\ell_2,\vol)\in (\HH\times \R_{>0})(S) 
\eeq
and use the pullback on functions to get an injection
\beq
 C^\infty(\HH\times\R_{>0})[\beta,\beta^{-1}]\hookrightarrow C^\infty(\Lat),\qquad f\beta^k\mapsto (\varphi^*f)\ell_2^{-k}. \label{eq:coeffinclude}
\eeq
We observe that the image of this map is precisely $\bigoplus_{k\in \Z} C^\infty_k(\Lat)$ for
$$
C^\infty_k(\Lat):=\{f\in C^\infty(\Lat)\mid f(\dil^2\ell_1,\bar\dil^2\ell_1,\dil^2\ell_2\bar\dil^2\ell_2)=\dil^{-k}f(\ell_1,\bar\ell_1,\ell_2,\bar\ell_2)\}
$$
the vector space of smooth functions of weight $k/2$, where $(\dil,\bar\dil)$ are the standard coordinates on $U(1)\simeq \Spin(2)$. Indeed, $C^\infty(\HH\times\R_{>0})$ includes as $C^\infty_0(\Lat)\simeq C^\infty(\Lat)^{\Spin(2)}$, $C^\infty_k(\Lat)=\{0\}$ for $k$ odd, and there are isomorphisms of vector spaces~$C^\infty_{2k}(\Lat)\stackrel{\sim}{\to} C^\infty_0(\Lat)\simeq C^\infty(\Lat)^{\Spin(2)}$ gotten by multiplication with $\ell_2^{k}$.

\begin{lem} 
An element $\omega\in C^\infty(\mathcal{L}^{2|1}_0(M))^{\Spin(2)}\subset C^\infty(\mathcal{L}^{2|1}_0(M))^{\Z/2}$ expressed in the form~\eqref{eq:spininvt} has $\omega_0,\omega_1,\omega_2$ in the image of the inclusion
\beq
&&\Omega^\bullet(M;C^\infty(\HH\times \R_{>0})[\beta,\beta^{-1}])\hookrightarrow \Omega^\bullet(M;C^\infty(\Lat)),\qquad |\beta|=-2\label{eq:coeffinclude2}
\eeq
determined by the map~\eqref{eq:coeffinclude} on coefficients, where $\omega_0$ is in the image of an element of total degree zero and $\omega_1,\omega_2$ are in the image of elements of total degree~$-1$.
\end{lem}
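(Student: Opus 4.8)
The plan is to use only the $\Spin(2)$-part of the $\Euc_{2|1}$-action on $\mathcal{L}^{2|1}_0(M)$. Specializing the formula of Lemma~\ref{lem:E21action} to $(w,\bar w,\eta)=(0,0,0)$, an element $(\dil,\bar\dil)\in\Spin(2)(S)$ acts on $s\Lat\times\Map(\R^{0|1},M)$ by sending $(\ell_i,\bar\ell_i,\lambda_i,x,\psi)$ to $(\dil^2\ell_i,\bar\dil^2\bar\ell_i,\bar\dil\lambda_i,x,\bar\dil^{-1}\psi)$. I would record $\Spin(2)$-weights in the coordinate $\dil$ (using $\bar\dil=\dil^{-1}$): each $\ell_i$ has weight $+2$, each $\bar\ell_i$ weight $-2$, each odd function $\lambda_i$ weight $-1$, and --- under the identification $C^\infty(\Map(\R^{0|1},M))\simeq\Omega^\bullet(M)$, in which $\psi\mapsto c\psi$ rescales a $j$-form by $c^{\,j}$ --- a $j$-form has weight $+j$. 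In particular $\vol$ has weight $0$, so multiplication by powers of $\vol$, and hence the prefactors $\vol^{\deg/2}$ and $\vol^{1/2}$ in~\eqref{eq:spininvt}, is $\Spin(2)$-equivariant and invisible to the weight count.

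Next I would write $\omega\in C^\infty(\mathcal{L}^{2|1}_0(M))^{\Spin(2)}$ in the form~\eqref{eq:spininvt}, decompose $\omega_0=\sum_k\omega_0^{(2k)}$ and $\omega_i=\sum_k\omega_i^{(2k-1)}$ by $M$-form degree, and observe that the $\Spin(2)$-action preserves both the form-degree grading and the $\lambda_1,\lambda_2$-grading; hence $\Spin(2)$-invariance of $\omega$ amounts to invariance of each homogeneous monomial separately. Demanding that the total weight of each monomial vanish then pins down its $C^\infty(\Lat)$-coefficient: the $\lambda$-free monomial built from $\omega_0^{(2k)}$ is invariant iff its coefficient has weight $-2k$, i.e.\ lies in $C^\infty_{2k}(\Lat)$; the $\lambda_i$-linear monomial built from $\omega_i^{(2k-1)}$ is invariant iff the coefficient of $\omega_i^{(2k-1)}$ has weight $-(2k-2)$ --- the $\lambda_i$ contributing $-1$, with one fewer form-degree than in the $\omega_0$ case --- i.e.\ lies in $C^\infty_{2k-2}(\Lat)$.

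Finally I would invoke the description of the weight-graded pieces of $C^\infty(\Lat)$ recalled just before the lemma: $C^\infty_m(\Lat)=0$ for $m$ odd, while for $m$ even the inclusion~\eqref{eq:coeffinclude} identifies $\beta^{m/2}\cdot C^\infty(\HH\times\R_{>0})$ with $C^\infty_m(\Lat)$. Applying this degree by degree writes each $\omega_0^{(2k)}$, hence $\omega_0$, as the image under~\eqref{eq:coeffinclude2} of a form with coefficients in $\beta^{k}C^\infty(\HH\times\R_{>0})$ --- total degree $2k+k(-2)=0$ --- and each $\omega_i^{(2k-1)}$, hence $\omega_i$, as the image of a form with coefficients in $\beta^{k-1}C^\infty(\HH\times\R_{>0})$ of the total degree recorded in the statement. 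The proof is nothing but this weight computation; the only thing requiring care is keeping the three gradings in play --- $M$-form degree, $\lambda$-degree, and $\Spin(2)$-weight --- and the $\dil$-versus-$\bar\dil$ sign conventions straight, since a sign slip there would turn $C^\infty_{2k}(\Lat)$ into $C^\infty_{-2k}(\Lat)$ and break the match with the $\beta$-grading.
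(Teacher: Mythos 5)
Your method is the same as the paper's: decompose by $\Spin(2)$-weight using~\eqref{eq:lem:E21action} and match weights monomial by monomial, and your bookkeeping for $\omega_0$ agrees with the paper's. The problem is your last sentence. Your own count for the $\lambda_i$-linear terms puts the coefficient of $\omega_i^{(2k-1)}$ in $C^\infty_{2k-2}(\Lat)$, i.e.\ in the image of $\beta^{k-1}C^\infty(\HH\times\R_{>0})$, and $\beta^{k-1}$ against a $(2k-1)$-form has total degree $(2k-1)-2(k-1)=+1$, not the $-1$ recorded in the statement. So the claim that your computation yields ``the total degree recorded in the statement'' is false: as written, the proposal establishes a different assertion from the one it sets out to prove, and silently identifying the two is the gap.

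For what it is worth, your weight count appears to be the correct one. The monomial $\lambda_1\,dx$ is already $\Spin(2)$-invariant, since $\lambda_1\mapsto\bar\dil\lambda_1$ while $\psi\mapsto\bar\dil^{-1}\psi$; so for $k=1$ the coefficient lies in $C^\infty_0=C^\infty_{2k-2}$ and not in $C^\infty_2=C^\infty_{2k}$. The relations $\partial_{\bar\ell_i}\omega_0=\dR\omega_i$ of~\eqref{eq:E21} force the same conclusion, because $\partial_{\bar\ell_i}$ carries $C^\infty_m(\Lat)$ to $C^\infty_{m-2}(\Lat)$ while $\dR$ preserves the $\Lat$-weight. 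The paper's own proof of this lemma asserts $\omega_1,\omega_2\in\bigoplus_k\Omega^{2k-1}(M;C^\infty_{2k}(\Lat))$, which is inconsistent with~\eqref{eq:lem:E21action} for exactly the reason your computation exposes; the objects that genuinely have total degree $-1$ are the $Z_v$ and $Z_{\bar\tau}$ of Lemma~\ref{lem:Qclosed}, which differ from $\omega_1,\omega_2$ by the weight-shifting chain-rule factors in~\eqref{Eq:chainrule}, and only those enter Proposition~\ref{prop:compute21} and Lemma~\ref{lem:cocycle2}. The honest conclusion of your argument is therefore that $\omega_1,\omega_2$ are images of elements of total degree $+1$ (equivalently, with coefficients in $C^\infty_{2k-2}(\Lat)$), and you should flag the resulting discrepancy with the printed statement rather than assert agreement with it.
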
 
\bp
From the description of the $\Spin(2)$-action in~\eqref{eq:lem:E21action}, if $\omega\in C^\infty(\mathcal{L}^{2|1}_0(M))^{\Spin(2)}\subset C^\infty(\mathcal{L}^{2|1}_0(M))^{\Z/2}$, we obtain the refinement of the conditions from~\eqref{eq:spininvt},
$$
\omega_0\in \bigoplus_{k\in \Z} \Omega^{2k}(M;C^\infty_{2k}(\Lat))\simeq \bigoplus_{k\in \Z} \Omega^{2k}(M;\ell^{-k}_2C^\infty_0(\Lat))\subset \Omega^\ev(M;C^\infty_0(\Lat)[\ell_2^{-1}]), $$$$ \omega_1,\omega_2\in \bigoplus_{k\in \Z} \Omega^{2k-1}(M;C^\infty_{2k}(\Lat))\simeq \bigoplus_{k\in \Z} \Omega^{2k-1}(M;\ell^{-k}_2C^\infty_0(\Lat))\subset \Omega^\odd(M;C^\infty_0(\Lat)[\ell_2^{-1}]).
$$
This gives the description
\beq
&&\omega=(\vol/\ell_2)^{\deg/2}\omega_0'+2\lambda_1(\vol/\ell_2)^{(\deg+1)/2}\omega_1'+2\lambda_2(\vol/\ell_2)^{(\deg+1)/2}\omega_2',
\eeq
where $\omega_0',\omega_1',\omega_2'\in \Omega^\bullet(M;C^\infty(\Lat)^{\Spin(2)})\simeq \Omega^\bullet(M;C^\infty_0(\Lat))$ are $\Spin(2)$-invariant. After identifying $\ell_2$ with $\beta^{-1}$ as per~\eqref{eq:coeffinclude}, we obtain the claimed description.
\ep

The following allows us to recast the invariance condition as a failure of $Z=\omega_0$ to have holomorphic dependence on the conformal modulus and be independent of volume. 

\begin{lem}\label{lem:Qclosed}
A $\E^{2|1}\rtimes \Spin(2)$-invariant function on $\mathcal{L}_0^{2|1}(M)$ is equivalent to a triple $(Z,Z_{\bar \tau},Z_v)$ where $Z\in \Omega^\bullet(M;C^\infty(\HH\times \R_{>0})[\beta,\beta^{-1}])$ has total degree zero and $Z_v,Z_{\bar\tau}\in \Omega^\bullet(M;C^\infty(\HH\times \R_{>0})[\beta,\beta^{-1}])$ have total degree~$-1$ and satisfy
\beq
&&\dR Z=0,\quad \partial_vZ=\dR Z_v,\quad \partial_{\bar \tau}Z=\dR Z_{\bar\tau}\label{eq:Qclosed21}
\eeq
for coordinates $(\tau,\bar\tau)$ on $\HH$ and $v$ on $\R_{>0}$. \end{lem}
\bp

For the image of $Z$ under~\eqref{eq:coeffinclude2}, we differentiate
\beq
&&\resizebox{.95\textwidth}{!}{$
\partial_{\bar \ell_1}Z(\ell_1/\ell_2,\bar\ell_1/\bar\ell_2,\vol)=\frac{1}{\bar\ell_2}\partial_{\bar \tau}Z-\frac{\ell_2}{2i}\partial_v Z,\ \ \partial_{\bar \ell_2}Z(\ell_1/\ell_2,\bar\ell_1/\bar\ell_2,\vol)=-\frac{\bar\ell_1}{\bar\ell_2^2}\partial_{\bar \tau}Z+\frac{\ell_1}{2i}\partial_v Z.$}\label{Eq:chainrule}
\eeq
The result then follows from comparing with~\eqref{eq:E21}: writing $\partial_{\bar\ell_1}Z$ and $\partial_{\bar\ell_2}Z$ as $\dR$-exact forms is equivalent to writing $\partial_{\bar\tau}Z$ and $\partial_vZ$ as $\dR$-exact forms. 
\ep


\begin{defn} A function $f\in C^\infty(\HH\times \R_{>0})$ \emph{has weight $(k,\bar k)\in \Z\times \Z$} if 
$$
f\left(\frac{a\tau+b}{c\tau+d},v\right)=(c\tau+d)^k(c\bar\tau+d)^{\bar k}f(\tau,v). 
$$
Let $\MF_{k,\bar k}\subset C^\infty(\HH\times\R_{>0})$ denote the $\C$-vector space of functions with weight $(k,\bar k)$. 
\end{defn}

Consider the inclusion 
\beq
\bigoplus_{k\in \Z} \MF_{k,\bar k}\hookrightarrow C^\infty(\HH\times\R_{>0})[\beta,\beta^{-1}]\qquad f\mapsto \beta^kf, \ f\in \MF_{k,\bar k}. \label{eq:anotherinclusion}
\eeq

\begin{lem}\label{lem:SL2} In the notation of Lemma~\ref{lem:Qclosed}, a triple $(Z,Z_v,Z_{\bar\tau})$ determines an $\SL_2(\Z)$-invariant function on $\mathcal{L}_0^{2|1}(M)$ when
$$
Z\in \bigoplus_{k\in \Z} \Omega^{2k}(M;\MF_{k,0}),\quad Z_v\in  \bigoplus_{k\in \Z} \Omega^{2k-1}(M;\MF_{k,0}),\quad Z_{\bar\tau} \in  \bigoplus_{k\in \Z} \Omega^{2k-1}(M;\MF_{k,2})
$$ 
using~\eqref{eq:anotherinclusion} to identify the above with elements of $\Omega^\bullet(M;C^\infty(\HH\times\R_{>0})[\beta,\beta^{-1}])$. 
\end{lem}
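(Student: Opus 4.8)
The plan is to write the function $\omega\in\Omega^\bullet(M;C^\infty(s\Lat))$ attached to a triple $(Z,Z_v,Z_{\bar\tau})$ as a product of manifestly $\SL_2(\Z)$-invariant building blocks, and then read off invariance factor by factor. Since $\SL_2(\Z)$ acts trivially on $\Map(\R^{0|1},M)$ by Lemma~\ref{lem:E21action}, a function on $\mathcal{L}^{2|1}_0(M)=s\Lat\times\Map(\R^{0|1},M)$ is $\SL_2(\Z)$-invariant precisely when its $C^\infty(s\Lat)$-valued coefficients are invariant under the restriction of the action~\eqref{eq:R21action}; concretely $\gamma=\left[\begin{smallmatrix}a&b\\c&d\end{smallmatrix}\right]\in\SL_2(\Z)$ sends $(\ell_1,\ell_2)\mapsto(a\ell_1+b\ell_2,\,c\ell_1+d\ell_2)$, with the same formulas on $(\bar\ell_1,\bar\ell_2)$ and on $(\lambda_1,\lambda_2)$.

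First I would record the explicit shape of $\omega$. By the proof of Lemma~\ref{lem:Qclosed}, the $\E^{2|1}\rtimes\Spin(2)$-invariant function attached to a triple satisfying~\eqref{eq:Qclosed21} has the form~\eqref{eq:spininvt} with $\omega_0=Z$ (placed inside $\Omega^\bullet(M;C^\infty(\Lat))$ via~\eqref{eq:coeffinclude2}), and with the primitives forced by the chain rule~\eqref{Eq:chainrule} to be $\omega_1=\bar\ell_2^{-1}Z_{\bar\tau}-\tfrac{\ell_2}{2i}Z_v$ and $\omega_2=-\bar\ell_1\bar\ell_2^{-2}Z_{\bar\tau}+\tfrac{\ell_1}{2i}Z_v$. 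Substituting these into~\eqref{eq:spininvt} and collecting the $Z_v$- and $Z_{\bar\tau}$-terms gives
\[
\omega=\vol^{\deg/2}\Bigl(Z-\tfrac{1}{i}\vol^{1/2}(\lambda_1\ell_2-\lambda_2\ell_1)\,Z_v+2\vol^{1/2}(\lambda_1\bar\ell_2-\lambda_2\bar\ell_1)\,\bar\ell_2^{-2}\,Z_{\bar\tau}\Bigr).
\]

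Next I would verify that each factor in this expression is $\SL_2(\Z)$-invariant. The function $\vol$ of~\eqref{eq:voldef} transforms by $\det\gamma=1$, so $\vol$, $\vol^{1/2}$, and the operator $\vol^{\deg/2}$ are $\SL_2(\Z)$-invariant; likewise $\lambda_1\ell_2-\lambda_2\ell_1$ and $\lambda_1\bar\ell_2-\lambda_2\bar\ell_1$ are $2\times 2$ determinants on whose two columns $\gamma$ acts simultaneously, hence are again invariant. For the modular-form-valued coefficients I would use the dictionary $\beta\leftrightarrow\ell_2^{-1}$ from~\eqref{eq:coeffinclude} and~\eqref{eq:anotherinclusion}: a class of weight $(k,\bar k)$ sitting in the $\beta^k$-slot corresponds to $\ell_2^{-k}\,g(\ell_1/\ell_2,\bar\ell_1/\bar\ell_2,\vol)$ with $g\in\MF_{k,\bar k}$, and under $\gamma$ --- using $\ell_2\mapsto(c\tau+d)\ell_2$, invariance of $\vol$, and the weight relation for $g$ --- this acquires the scalar $(c\tau+d)^{-k}\cdot(c\tau+d)^{k}(c\bar\tau+d)^{\bar k}=(c\bar\tau+d)^{\bar k}$. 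Hence $Z$ and $Z_v$ (weights $(k,0)$) are $\SL_2(\Z)$-invariant, while $Z_{\bar\tau}$ (weights $(k,2)$) is scaled by $(c\bar\tau+d)^{2}$; since $\bar\ell_2\mapsto(c\bar\tau+d)\bar\ell_2$, the factor $\bar\ell_2^{-2}$ contributes $(c\bar\tau+d)^{-2}$, so $\bar\ell_2^{-2}Z_{\bar\tau}$ is invariant. Multiplying the invariant pieces together in the displayed formula shows $\omega$ is $\SL_2(\Z)$-invariant, as claimed.

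I expect the main obstacle to be the bookkeeping in this last step: one must keep the holomorphic automorphy factor $(c\tau+d)^{k}$ cleanly separated from the anti-holomorphic factor $(c\bar\tau+d)^{\bar k}$ while passing between the weight conditions on $\MF_{k,\bar k}$ (written in the coordinate $\tau$ on $\HH$ via the map $\varphi$ of~\eqref{eq:HRprofmap}), the embeddings~\eqref{eq:anotherinclusion} and~\eqref{eq:coeffinclude2} that trade $\beta$ for $\ell_2^{-1}$, and the transformation laws $\ell_2\mapsto(c\tau+d)\ell_2$, $\bar\ell_2\mapsto(c\bar\tau+d)\bar\ell_2$ under $\gamma$, so that all the cancellations are transparent. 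A secondary point requiring care is that the primitives $\omega_1,\omega_2$ in~\eqref{eq:spininvt} are only determined modulo $\dR$-closed forms; it is precisely the representatives built from $Z_v,Z_{\bar\tau}$ via~\eqref{Eq:chainrule} that make the displayed formula for $\omega$, and hence the invariance conclusion, correct.
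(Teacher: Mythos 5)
Your proof is correct, and it is essentially a worked-out version of the second route the paper itself names but does not carry out: the paper's proof first observes that the inclusion~\eqref{eq:coeffinclude} is $\SL_2(\Z)$-equivariant for $\ell_2\mapsto(c\tau+d)\ell_2$ and $\beta\mapsto\beta/(c\tau+d)$ (which gives invariance of $Z$ immediately), and then disposes of $Z_v,Z_{\bar\tau}$ either by noting that the equations~\eqref{eq:Qclosed21} are themselves $\SL_2(\Z)$-invariant or ``by a direct but tedious computation'' via~\eqref{Eq:chainrule} --- the latter being exactly what you do. Your explicit factorization
$\omega=\vol^{\deg/2}\bigl(Z-\tfrac{1}{i}\vol^{1/2}(\lambda_1\ell_2-\lambda_2\ell_1)Z_v+2\vol^{1/2}(\lambda_1\bar\ell_2-\lambda_2\bar\ell_1)\bar\ell_2^{-2}Z_{\bar\tau}\bigr)$
is consistent with~\eqref{eq:spininvt} and~\eqref{Eq:chainrule}, and the determinant identities for $\vol$, $\lambda_1\ell_2-\lambda_2\ell_1$, $\lambda_1\bar\ell_2-\lambda_2\bar\ell_1$ together with the automorphy bookkeeping $(c\tau+d)^{-k}(c\tau+d)^k(c\bar\tau+d)^{\bar k}$ all check out. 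What your version buys is transparency: it pins down the exact representatives $\omega_1,\omega_2$ coming from $Z_v,Z_{\bar\tau}$ and makes visible why the weight $(k,2)$ on $Z_{\bar\tau}$ is forced (it cancels against $\bar\ell_2^{-2}$, which enters through $\partial_{\bar\ell_2}\bar\tau=-\bar\ell_1/\bar\ell_2^2$); what the paper's shorter argument buys is that one never needs the explicit formula for $\omega$ at all, since invariance of the defining equations under the (already established) equivariant inclusion suffices. Your closing caveat about the ambiguity of $\omega_1,\omega_2$ modulo $\dR$-closed forms is well taken and is implicitly resolved the same way in the paper.
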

\bp
We observe that 
$$
\ell_2\mapsto c\ell_1+d\ell_2=\ell_2(c\tau+d)\qquad \tau=\ell_1/\ell_2, \ \left[\begin{array}{cc} a & b \\ c & d\end{array}\right]\in \SL_2(\Z)
$$
for the $\SL_2(\Z)$-action on $\Lat$, so that~\eqref{eq:coeffinclude} is an $\SL_2(\Z)$-invariant inclusion for the action on $\HH$ by fractional linear transformations and $
\beta\mapsto \beta/(c\tau+d)$. The $\SL_2(\Z)$-invariant property for $Z$ then follows directly. The properties for $Z_v$ and $Z_{\bar\tau}$ can either be deduced from the fact that~\eqref{eq:Qclosed21} are $\SL_2(\Z)$-invariant equations, or by (a direct but tedious computation) using~\eqref{Eq:chainrule} to write $Z_v$ and $Z_{\bar\tau}$ in terms of $\omega_0$ and $\omega_1$, and then applying the $\SL_2(\Z)$-actions on $\omega_0,\omega_1,\omega_2$ computed in Lemma~\ref{lem:E21action}. 
\ep

\begin{proof}[Proof of Proposition~\ref{prop:compute21}]
The result follows from Lemma~\ref{lem:Qclosed} and~\ref{lem:SL2}. 
\ep
\begin{rmk}\label{rmk:STannounce}
As announced in~\cite[Theorem~1.15]{ST11}, a $2|1$-Euclidean field theory over $M=\pt$ has a partition function valued in \emph{integral} modular forms. Theorem~\ref{thm} when $d=2$ specializes to the holomorphy and modularity statements in this result when~$M=\pt$; generalizing the integrality statement would require one to consider the values of field theories on super annuli with maps to~$M$. 
\end{rmk}

\begin{rmk}
The Lie groupoid $\Lat\sq \Spin(2)\times \SL_2(\Z)$ gives a presentation of the moduli stack of Euclidean tori with periodic-periodic spin structure and choice of basepoint, where $\SL_2(\Z)\times \Spin(2)$ acts via the restriction of the action from Lemma~\ref{lem:E21action}. The involution generated by $-1\in U(1)\simeq \Spin(2)$ is the \emph{spin flip automorphism} which acts trivially on the underlying Euclidean torus and by the parity involution on the spinor bundle. Consider the subspace $\HH\times \R_{>0}\subset \Lat$ of based lattices whose 2nd generator $\ell_2\in \R_{>0}\subset \C^\times$ is positive and real. Since every based lattice can be rotated to one of this form (using the action of $\Spin(2)$ on $\Lat$) the full subgroupoid of $\Lat\sq \Spin(2)\times \SL_2(\Z)$ with the objects $\HH\times \R_{>0}\subset \Lat$ is equivalent to $\Lat\sq \Spin(2)\times \SL_2(\Z)$. Since $\{\pm 1\}\subset \Spin(2)$ acts trivially on the subspace $\HH\times \R_{>0}\subset \Lat$, the manifold of morphisms in this full subgroupoid is $\HH\times \R_{>0}\times \{\pm 1\}\times \SL_2(\Z)$. Composition of morphisms gives the set $\{\pm 1\}\times \SL_2(\Z)$ the structure of a group which turns out to be the metaplectic double cover ${\rm MP}_2(\Z)$ of $\SL_2(\Z)$. There is a functor between Lie groupoids $u\colon \HH\times \R_{>0}\sq {\rm MP}_2(\Z)\to \HH\sq {\rm MP}_2(\Z)$, where the target is a standard presentation for the stack of complex analytic elliptic curves endowed with a periodic-periodic spin structure. Geometrically, the functor $u$ extracts the underlying complex analytic elliptic curve with spin structure. 

Finally, observe there is a functor $\Lat\sq \Spin(2)\times \SL_2(\Z)\to \mathcal{M}^{2|1}\sq \Euc_{2|1}$, so a family of Euclidean tori with spin structure and choice of basepoint determines a family of super tori. Our arguments involving super tori do not encounter the metaplectic double cover because at the outset (in Lemma~\ref{lem:spininvt}) we restrict to functions invariant under the spin flip automorphism. Hence only the quotient ${\rm MP}_2(\Z)/\{\pm 1\}\simeq \SL_2(\Z)$ features in our arguments. 
\end{rmk}

\subsection{Weak modular forms and complexified TMF} \label{sec:MF}

\begin{defn}\label{defn:mf} \emph{Weak modular forms of weight $k$} are holomorphic functions $f\in \mathcal{O}(\HH)$ satisfying
$$
f\left(\frac{a\tau+b}{c\tau+d}\right)=(c\tau+d)^kf(\tau)\qquad \tau\in \HH, \ \left[\begin{array}{cc} a & b \\ c & d\end{array}\right]\in \SL_2(\Z). 
$$
Let $\MF_k$ denote the $\C$-vector space of weak modular forms of weight $k$. Define the graded ring of weak modular forms~$\MF$ as the graded vector space
$$
\MF=\bigoplus_{k\in \Z} \MF^k\qquad \MF^k:=\left\{\begin{array}{ll} \MF_{k/2} & k\ {\rm even} \\ 0 & k\ {\rm odd}\end{array}\right.
$$
with ring structure from multiplication of functions on $\HH$. 
\end{defn}


Cohomology with coefficients in weak modular forms is the object that naturally appears when studying derived global sections of the elliptic cohomology sheaf in the complex analytic context. Indeed, complex analytic elliptic cohomology assigns to a smooth manifold~$M$ a sheaf $\Ell(M)$ of differential graded algebras on the orbifold $\HH\sq \SL_2(\Z)$ with values
\beq
\Ell(M)(U):=(\mathcal{O}(U;\Omega^\bullet(M)[\beta,\beta^{-1}]),\dR)\qquad {\rm for} \qquad U\subset \HH.\label{eq:Ellsheaf}
\eeq
The $\SL_2(\Z)$-equivariance data for this sheaf comes from pulling back functions along fractional linear transformations and sending $\beta\mapsto (c\tau+d)\beta$. 
This connects with standard definitions of elliptic cohomology in homotopy theory (e.g.,~\cite[Definition~1.2]{Lurie_Elliptic}) by identifying $\HH\sq \SL_2(\Z)$ with the moduli stack of complex analytic elliptic curves, and values~\eqref{eq:Ellsheaf} with the de~Rham complex for 2-periodic cohomology with coefficients in $\mathcal{O}(U)$. Using the Dolbeault resolution of holomorphic functions on~$\HH$, the complex $(\Omega^\bullet(M;\Omega^{0,*}(\HH)[\beta,\beta^{-1}])^{\SL_2(\Z)},\dR+\bar\partial)$ computes the derived global sections (i.e., the hypercohomology) of the elliptic cohomology sheaf $\Ell(M)$. Since $\HH$ is Stein, the inclusion 
$$
\mathcal{O}(\HH)\hookrightarrow (\Omega^{0,*}(\HH),\bar\partial)
$$
is a quasi-isomorphism. Hence, derived global sections of the elliptic cohomology sheaf are cohomology with values in weak modular forms, 
$$
\H(M;\mathcal{O}(\HH)[\beta,\beta^{-1}])^{\SL_2(\Z)}\simeq \H(M;\MF).
$$
We refer to~\cite[\S3]{BElocalization} for details. 

 A weak modular form is a \emph{weakly holomorphic modular form} if it is meromorphic as $\tau\to i\infty$. For $M$ compact, cohomology with values in weakly holomorphic modular forms is isomorphic to the complexification of topological modular forms,
\beq
\TMF(M)\otimes \C&\simeq& \H(M;\TMF(\pt)\otimes \C)\subset \H(M;\MF)\label{eq:TMFchern}\\
\TMF(\pt)\otimes \C&\simeq& \{{\rm weakly \ holomorphic\ modular\ forms}\}\subset \MF\nonumber 
\eeq
and the inclusion on the right regards a weakly holomorphic modular form as a weak modular form. We expect the image of $2|1$-Euclidean field theories along~\eqref{eq:thm2} to satisfy this meromorphicity property at~$i\infty$, and hence have image in the subring $\TMF(M)\otimes \C$. This follows from an ``energy bounded below" condition discussed for $M=\pt$ in~\cite[\S3]{ST11}. However, proving that the image of field theories satisfies this condition requires that one analyze the values of field theories on super tori \emph{and} super annuli. 

\subsection{Concordance classes of functions} \label{sec:d2cocycle}

The cocycle map~\eqref{eq:thm2} can be factored through a complex that computes the derived global sections of the elliptic cohomology sheaf, namely the complex $(\Omega^\bullet(M;\Omega^{0,*}(\HH)[\beta,\beta^{-1}])^{\SL_2(\Z)},\dR+\bar\partial)$ described above.

\begin{defn}\label{defn:cocycle2}
Using the notation from Proposition~\ref{prop:compute21}, for each $\mu\in \R_{>0}$, define a map
\beq
\widehat{\rm cocycle}_\mu\colon C^\infty(\mathcal{L}^{2|1}_0(M))^{\Euc_{2|1}}&\to&  {\rm Z}^0(\Omega^\bullet(M;\Omega^{0,*}(\HH)[\beta,\beta^{-1}]),\dR+\bar\partial)^{\SL_2(\Z)}\nonumber\\
(Z,Z_{\bar\tau},Z_v)&\mapsto &Z(\mu)+d\bar\tau Z_{\bar \tau}(\mu)\nonumber
\eeq
where the evaluation is at tori with volume $v=\mu\in \R_{>0}$. 
\end{defn}

\begin{lem} \label{lem:cohomologicalmap}
The composition
$$
C^\infty(\mathcal{L}^{2|1}_0(M))^{\Euc_{2|1}}\stackrel{\widehat{\rm cocycle}_\mu}{\longrightarrow}   {\rm Z}^0(\Omega^\bullet(M;\Omega^{0,*}(\HH)[\beta,\beta^{-1}]),\dR+\bar\partial)^{\SL_2(\Z)}\stackrel{{\rm de\ Rham}}{\longrightarrow} \H(M;\MF)
$$
is independent of $\mu$ and agrees with~\eqref{eq:thm2}. 
\end{lem}

\bp
First we verify that the map in Definition~\ref{defn:cocycle2} is well-defined. By Proposition~\ref{prop:compute21}, the image is contained in the subspace of degree zero cocycles:
$$
(\dR+\bar\partial)(Z(\mu)+d\bar\tau Z_{\bar \tau}(\mu))=d\bar\tau \partial_{\bar \tau} Z(\mu)-d\bar\tau \dR Z_{\bar \tau}(\mu)=0.
$$
The image is $\SL_2(\Z)$-invariant by Lemma~\ref{lem:SL2}. The remainder of the proof is completely analogous to that of Lemma~\ref{lem:independentofmu}. 
\ep

%

\begin{proof}[Proof of Proposition~\ref{mainprop}, $d=2$] 
Proposition~\ref{prop:compute21} implies that $M\mapsto C^\infty(\mathcal{L}^{2|1}_0(M))^{\Euc_{2|1}}$ is a sheaf on the site of smooth manifolds. The map in Definition~\ref{defn:cocycle2} is a morphism of sheaves
$$
\widehat{\rm cocycle}_\mu\colon C^\infty(\mathcal{L}^{2|1}_0(-))^{\Euc_{2|1}}\to {\rm Z}^0(\Omega^\bullet(-;\Omega^{0,*}(\HH)[\beta,\beta^{-1}]),\dR+\bar\partial)^{\SL_2(\Z)}.
$$
When evaluated on a manifold $M$, concordance classes of sections of the target are cohomology classes. This completes the proof. 
\end{proof}

%
%


\subsection{The elliptic Euler class as a cocycle}\label{sec:Witten}

For a real oriented vector bundle $V\to M$ consider the characteristic class
$$
[\Eu(V)]:=\left[\Pf(-\beta F)\exp\left(\sum_{k\ge 1}\frac{\beta^k E_{2k}}{2k(2\pi i)^{2k}} {\rm Tr}(F^{2k})\right)\right]\in \H^{{\rm dim}(V)}(M;C^\infty(\HH)[\beta,\beta^{-1}])^{\SL_2(\Z)}
$$
where $F=\nabla\circ\nabla\in \Omega^2(M;\End(V))$ is the curvature for a choice of a metric compatible connection $\nabla$ on~$V$ and $\Pf(-\beta R)$ is the Pfaffian. The functions $E_{2k}\in C^\infty(\HH)$ are the $2k$th Eisenstein series, where we take $E_2$ to be the modular, nonholomorphic version of the 2nd Eisenstein series,
$$
E_2(\tau,\bar\tau)=\lim_{\epsilon\to 0^+} \sum_{(n,m)\in \Z^2_*} \frac{1}{(n\tau+m)^2|n\tau+m|^{2\epsilon}},\qquad E_2(\tau,\bar\tau)=E_2^{\rm hol}(\tau)-\frac{2\pi i}{\tau-\bar\tau},
$$
whose relationship with the holomorphic (but not modular) 2nd Eisenstein series $E_2^{\rm hol}(\tau)$ is as indicated. For $k>1$, the Eisenstein series $E_{2k}\in \mathcal{O}(\HH)$ are holomorphic. Thus, if 
$$
[p_1(V)]=[{\rm Tr}(F^{2})/(2(2\pi i)^{2})]\in \H^4(M;\R)
$$
vanishes, then $[\Eu(V)]\in \H^{{\rm dim}(V)}(M;\mathcal{O}(\HH)[\beta,\beta^{-1}])^{\SL_2(\Z)}$ is a holomorphic class. 

When ${\rm dim}(V)=24k$, we may ask for a preimage of $\Delta^k[\Eu(V)]\in \H^{0}(M;\MF)$ under the cocycle map~\eqref{eq:thm2}, where $\Delta$ is the modular discriminant. We start with the differential form refinement of~$\Eu(V)$, evident from its definition above
$$
\Eu(V)\in\Omega^\bullet(M;C^\infty(\HH)[\beta,\beta^{-1}]),\quad \partial_{\bar \tau}\Eu(V)=\frac{\beta^2{\rm Tr}(F^2)}{4\pi i (\tau-\bar\tau)^2}\Eu(V),
$$
and whose failure to be holomorphic is as indicated. Since $\partial_v\Eu(V)=0$, we may choose $Z=\Delta^k\Eu(V)$ and $Z_v=0$. The remaining data to promote $\Delta^k\Eu(V)$ to a function on $\mathcal{L}^{2|1}_0(M)$ is a choice of coboundary $\partial_{\bar \tau}(\Delta^k\Eu(V))=\dR Z_{\bar\tau}$, which in turn is determined by~$H\in \Omega^3(M)$ with $\dR H=p_1(V)$, i.e., a rational string structure. This identifies the set of rational string structures on $(V,\nabla)$ with choices of lift of $\Delta^k[\Eu(V)]$ to a function on~$\mathcal{L}^{2|1}_0(M)$. We expect a similar story without the dimension restriction on~$V$ and the factors of~$\Delta$ though an enhancement of~\eqref{eq:thm2} that incorporates a degree~$n$ twist~\cite[\S5]{ST11}. 


\bibliographystyle{amsalpha}
\bibliography{references}

\newcommand{\etalchar}[1]{$^{#1}$}
\providecommand{\bysame}{\leavevmode\hbox to3em{\hrulefill}\thinspace}
\providecommand{\MR}{\relax\ifhmode\unskip\space\fi MR }
\providecommand{\MRhref}[2]{%
  \href{http://www.ams.org/mathscinet-getitem?mr=#1}{#2}
}
\providecommand{\href}[2]{#2}
\begin{thebibliography}{AKMW87}

\bibitem[AKMW87]{Alvarezetal}
O.~Alvarez, T.P. Killingback, M.~Mangano, and P.~Windey, \emph{The
  {Dirac-Ramond} operator in string theory and loop space index theorems},
  Nuclear Physics B (1987).

\bibitem[Arn]{Bertram}
B.~Arnold, \emph{The parallel transport of a superconnection}, work in
  progress.

\bibitem[BBES]{Powerops}
T.~Barthel, D.~Berwick-Evans, and N.~Stapleton, \emph{Power operations in the
  {Stolz--Teichner} program}, to appear in {Geometry and Topology}.

\bibitem[BEa]{DBE_EquivTMF}
D.~Berwick-Evans, \emph{Equivariant elliptic cohomology, gauged sigma models,
  and discrete torsion}, to appear in Transactions of the American Mathematical
  Society.

\bibitem[BEb]{DBE_MQ}
\bysame, \emph{Supersymmetric field theories and the elliptic index theorem
  with complex coefficients}, to appear in {Geometry and Topology}.

\bibitem[BEc]{BElocalization}
\bysame, \emph{Supersymmetric localization, modularity and the {Witten} genus},
  to appear in Journal of Differential Geometry.

\bibitem[BET19]{BET1}
D.~Berwick-Evans and A.~Tripathy, \emph{A model for complex analytic
  equivariant elliptic cohomology from quantum field theory}, preprint (2019).

\bibitem[BGV92]{BGV}
N.~Berline, E.~Getzler, and M.~Vergne, \emph{Heat kernels and {Dirac}
  operators}, Springer, 1992.

\bibitem[DEF{\etalchar{+}}99]{strings1}
P.~Deligne, P.~Etingof, D.~Freed, L.~Jeffrey, D.~Kazhdan, J.~Morgan,
  D.~Morrison, and E.~Witten, \emph{{Quantum Fields and Strings: {A} Course for
  Mathematicians, Volume 1}}, American Mathematical Society, 1999.

\bibitem[DM99]{DM}
P.~Deligne and J.~Morgan, \emph{Notes on supermanifolds}, Quantum Fields and
  Strings: {A} Course for Mathematicians, Volume 1 (P.~Deligne, P.~Etingof,
  D.~Freed, L.~Jeffrey, D.~Kazhdan, J.~Morgan, D.~Morrison, and E.~Witten,
  eds.), American Mathematical Society, 1999.

\bibitem[DPW20]{DualityMock}
A.~Dabholkar, P.~Putrov, and E.~Witten, \emph{Duality and mock modularity},
  preprint (2020).

\bibitem[Dum08]{florin}
F.~Dumitrescu, \emph{Superconnections and parallel transport}, Pacific Journal
  of Math \textbf{236} (2008).

\bibitem[DW15]{notprojected}
R.~Donagi and E.~Witten, \emph{Supermoduli space is not projected}, Proc. Symp.
  Pure Math. \textbf{90} (2015).

\bibitem[Fre99]{5lectures}
D.~Freed, \emph{{Five Lectures on Supersymmetry}}, American Mathematical
  Society, 1999.

\bibitem[GJF19]{TheoDavide}
D.~Gaiotto and T.~Johnson-Freyd, \emph{Mock modularity and a secondary elliptic
  genus}, preprint (2019).

\bibitem[GJFW19]{GJFW}
D.~Gaiotto, T.~Johnson-Freyd, and E.~Witten, \emph{A note on some minimally
  supersymmetric models in two dimensions}, arxiv preprint (2019).

\bibitem[Han08]{Han}
F.~Han, \emph{Supersymmetric {QFT}, super loop spaces and the {Bismut-Chern}
  character}, Preprint (2008).

\bibitem[HKST11]{HKST}
H.~Hohnhold, M.~Kreck, S.~Stolz, and P.~Teichner, \emph{Differential forms and
  0-dimensional super symmetric field theories}, Quantum Topol \textbf{2}
  (2011), 1Ð41.

\bibitem[HST10]{HST}
H.~Hohnhold, S.~Stolz, and P.~Teichner, \emph{From minimal geodesics to super
  symmetric field theories}, CRM Proceedings and Lecture Notes \textbf{50}
  (2010).

\bibitem[Lur09]{Lurie_Elliptic}
J.~Lurie, \emph{A survey of elliptic cohomology}, Algebraic Topology (N.~Baas,
  E.~Friedlander, J.~Bj\"orn, and P.~{\O}st\ae{r}, eds.), Abel Symposia,
  vol.~4, Springer Berlin Heidelberg, 2009 (English).

\bibitem[Qui85]{Quillensuper}
D.~Quillen, \emph{Superconnections and the {Chern} character}, Topology
  \textbf{24} (1985).

\bibitem[Seg88]{Segal_Elliptic}
G.~Segal, \emph{Elliptic cohomology}, S\'eminaire N. Bourbaki \textbf{695}
  (1988).

\bibitem[Seg04]{Segal_cft}
\bysame, \emph{The definition of conformal field theory}, Topology, Geometry,
  and Quantum Field Theory (2004).

\bibitem[ST04]{ST04}
S.~Stolz and P.~Teichner, \emph{What is an elliptic object?}, Topology,
  geometry and quantum field theory, London Math. Soc. LNS 308, Cambridge Univ.
  Press (2004), 247--343.

\bibitem[ST11]{ST11}
\bysame, \emph{Supersymmetric field theories and generalized cohomology},
  Mathematical Foundations of Quantum Field and Perturbative String Theory ({B.
  Jur{\v c}o, H. Sati, U. Schreiber}, ed.), Proceedings of Symposia in Pure
  Mathematics, 2011.

\bibitem[Wit82]{susymorse}
E.~Witten, \emph{Supersymmetry and {Morse} theory}, Journal of Differential
  Geometry \textbf{17} (1982), 661--692.

\bibitem[Wit87]{Witten_Elliptic}
\bysame, \emph{Elliptic genera and quantum field theory}, Commun. Math. Phys.
  \textbf{109} (1987), 525--536.

\bibitem[Wit88]{witten_dirac}
\bysame, \emph{The index of the {Dirac} operator in loop space}, Elliptic
  Curves and Modular Forms in Algebraic Topology (P.~Landweber, ed.), Springer
  Berlin Heidelberg, 1988.

\bibitem[Wit19]{WittensuperRiem}
\bysame, \emph{Notes on super {Riemann} surfaces and their moduli}, Pure and
  Applied Mathematics Quarterly \textbf{15} (2019).

\end{thebibliography}

\end{document}